\newcolumntype{P}[1]{>{\centering\arraybackslash}p{#1}}
\newcolumntype{M}[1]{>{\centering\arraybackslash}m{#1}}
\theoremstyle{plain}
\newtheorem{theorem}{Theorem}
\newtheorem{remark}{Remark}%
\newtheorem{lemma}{Lemma}
\newtheorem{corollary}{Corollary}
\newtheorem{definition}{Definition}%
\providecommand{\theoremname}{Theorem}
\newenvironment{breakablealgorithm}
  {
   \begin{center}
     \refstepcounter{algorithm}
     \hrule height.8pt depth0pt \kern2pt
     \renewcommand{\caption}[2][\relax]{
       {\raggedright\textbf{\ALG@name~\thealgorithm} ##2\par}%
       \ifx\relax##1\relax 
         \addcontentsline{loa}{algorithm}{\protect\numberline{\thealgorithm}##2}%
       \else 
         \addcontentsline{loa}{algorithm}{\protect\numberline{\thealgorithm}##1}%
       \fi
       \kern2pt\hrule\kern2pt
     }
  }{
     \kern2pt\hrule\relax
   \end{center}
  }
\begin{document}

\title{Riemannian Stochastic Hybrid Gradient Algorithm for Nonconvex Optimization}
\date{}
\author{Jiabao Yang}
\maketitle
\pagenumbering{arabic}

\abstract{In recent years, Riemannian stochastic gradient descent (R-SGD), Riemannian stochastic variance reduction (R-SVRG) and Riemannian stochastic recursive gradient (R-SRG) have attracted considerable attention on Riemannian optimization. Under normal circumstances, it is impossible to analyze the convergence of R-SRG algorithm alone. The main reason is that the conditional expectation of the descending direction is a biased estimation.  However, in this paper, we consider linear combination of three descent directions on Riemannian manifolds as the new descent direction (i.e., R-SRG, R-SVRG and R-SGD) and the parameters are time-varying. At first, we propose a Riemannian stochastic hybrid gradient(R-SHG) algorithm with adaptive parameters. The algorithm gets a global convergence analysis with a decaying step size. For the case of step-size is fixed, we consider two cases with the inner loop fixed and time-varying. Meanwhile, we quantitatively research the convergence speed of the algorithm. Since the global convergence of the R-SHG algorithm with adaptive parameters requires higher functional differentiability, we propose a R-SHG algorithm with time-varying parameters. And we obtain similar conclusions under weaker conditions.}

\maketitle
\section{Introduction}

Consider the following finite-sum optimization problems definition on a smooth Riemannian manifold $\mathcal{M}$
\begin{equation}
  \min\limits_{\omega \in \mathcal{M}} f(\omega):=\frac{1}{n}\sum_{i=1}^{n}f_i(\omega)\qquad (P)
\end{equation}
where the function $f_i:\mathcal{M}\rightarrow \mathcal{R},i=\{1,2,...,n\}$.

Problem (P) has many applications; including principal component analysis \cite{SatoH1,Balzano}, low rank matrix completion \cite{Boumal1,Kasai,Boumal2,Meyer}, Riemannian centroid computation \cite{Yuan}, independent component analysis \cite{Theis},dictionary learning \cite{Cherian,Sun} and so on.

Since some constrained optimization problems in Euclidean space can be converted to unconstrained problems on manifolds, it is interested in solving problem (P) over the Riemannian manifold space via Riemannian gradient methods. A common idea is that the negative of Riemannian gradient direction is used as the descent direction, that is calculate the Riemannian full gradient of function $f$: $\text{grad}f(\omega)=\frac{1}{n}\sum_{i=1}^{n}\text{grad}f_i(\omega)$, where the $\text{grad}f_i(\omega)$ denotes Riemannian gradient of the $i$th. If $n$ is large, the cost of computing and operating is expensive.

In Euclidean space, a popular choice to solve problem (P) is stochastic gradient descent(SGD) algorithm. Some scholars have achieved better results by improving robustness\cite{Staib}, adapting learning\cite{Kingma} etc. Inspired by the SGD algprithm in Euclidean space, other scholars have proposed the R-SGD algorithm on Riemannian manifold. Bonnabel proposed a R-SGD algorithm to extend SGD algorithm from Euclidean space to Riemannian manifold. However, we should point out that a popular choice is random selection of partial function gradients without taking the gradients of all functions. But R-SGD algorithm needs exponential mapping and parallel translation operation in each iteration. If these computational costs are lower than the computational Riemannian gradient, we can ignore them. Similar to the SGD algorithm in Euclidean space, when we use a large step size in R-SGD algorithm, the loss of training will decrease rapidly at first, but it may have a great influence around the solution. On the contrary, in order to obtain convergence, we require a large number of iterations when using smaller steps. Therefore, R-SGD algorithm can start with a large step size and gradually reduces the step size to avoid these problems. But due to the attenuation of step-size sequence, the convergence of R-SGD algorithm is slow.

In recent years, the technique of stochastic variance reduction have attracted considerable attention for minimizing the average of finite-number of loss functions. In Euclidean space, scholars prove that the method of variance reduction can accelerate SGD algorithm convergence \cite{Johnson}. The main idea is that by periodically calculating the gradient to correct the  deviation of stochastic gradient, and the gradient variance decreases with the progress of training. This leads to linear convergence.

Because of this, the paper \cite{Zhang, SatoH1} proposed a R-SVRG algorithm. Inspired by the variance reduction of non-convex optimization, \cite{SatoH1} has analyzed the R-SVRG algorithm of geodesic strongly convex function through a new theoretical analysis and explained the nonlinear (curve) geometric shape of the Riemannian manifold. This produce a linear convergence rate. The works are parallel with paper \cite{Zhang}. From the idea of paper \cite{Bonnabel}, paper \cite{SatoH1} proves the global convergence of the algorithm under retraction mapping and vector transport. But \cite{Zhang} is carried out under exponential mapping and parallel translation. It should emphasize that the local convergence rate is analyzed in \cite{SatoH1}. If the function $f$ is assumed to have global strong convexity in the search space, the global iterative complexity can be obtained.

Since R-SVRG algorithm uses double loop iteration, we need to add the condition that $\omega_0^s$ is transported to $\omega_t^s$, the vector transport of R-SVRG algorithm between the iterations of two distant points is required in the calculation. Its cost and difficulty will be improved. Therefore, a R-SRG algorithm independent of two distant points is proposed in \cite{SatoH2}, to avoid the calculation of contraction inverse and makes the calculation efficiency higher. The advantage of R-SRG algorithm over R-SVRG algorithm is more notable in the Riemannian than Euclidean case \cite{SatoH2}. In addition, from \cite{Cutkosky,Tran-Dinh}, Riemannian stochastic recursive momentum(R-SRM) algorithm is proposed in \cite{AndiH1}. The author considers the linear combination of R-SGD and R-SVRG, and obtained the R-SRM algorithm (the linear combination coefficient and step size of the algorithm are time-varying), and assumes that the optimization function is an unbiased estimation. It is proved that the expectation converges at the convergence rate of $\mathcal{O}(\frac{1}{T^{\frac{2}{3}}})$.

Because the calculation of exponential mapping and parallel translation are expensive, therefore, in this paper, we consider the situations with retraction mapping and vector transport. Inspired by \cite{AndiH1}, we consider the linear combination of three descent directions on Riemannian manifolds as the new descent direction (i.e., R-SRG, R-SVRG and R-SGD) and propose the two algorithms. And the linear combination of the parameters in the algorithms are time-varying.
Compared to the existing works, the key \textbf{contributions} of our paper are listed as follows

1) \,Commonly, the global convergence of R-SRG algorithm can not analyze alone as \cite{SatoH1}, the main reason is the conditional expectation of the descent direction is biased. In contrast, \cite{SatoH1} is unbiased. Therefore, we propose a R-SHG algorithm with adaptive parameters. In this way, the conditional expectation of the descent direction after the combination is still a biased estimation. For the case of reduced step size, by adapting the parameters of the R-SHG algorithm, we can get the global convergence. If special parameters are chosen, our results can be degenerated into \cite{SatoH1}. Moreover, for the case of fixed step size, we quantitatively research the convergence rate of the algorithm.

2) \,Research \cite{AndiH1} considers the linear combination of R-SRG and R-SGD. Our second algorithm (i,e, R-SHG algorithm with time-varying parameters) can obtain a faster convergence rate than them. If we consider the problem of expectation (online) minimization over Riemannian manifold $\mathcal{M}$, in that case, we can choose special parameters such that our results can be degenerated into \cite{AndiH1} and our results better. Moreover, we also give the convergence rate under fixed step size. These convergence conditions are weakly than the R-SHG algorithm with adaptive parameters.

3) \,Usually, choosing time-varying step size may accelerate the convergence of the algorithm.  In Riemannian manifold, the main consideration is to improve convergence speed by using time-varying step size.  However, our results imply that the convergence rate can also be accelerated under the condition of fixed step size by changing the parameters.

4) \,Convergence analysis(convergence rate) is complex in the algorithm, which is in itself a challenging problem. But our algorithm can do convergence analysis under time-varying step size and fixed step size. We use retraction mapping and vector transport, which is more general in Riemannian manifold than exponential mapping and parallel transport.

5) \,For the three special situations(i.e., the descent direction only use the R-SRG and R-SVRG term, the  retraction mapping and vector transport is taken as exponential mapping and parallel translation operations, and function $f$ is $\tau-$gradient dominated), we give the better conclusions.

The rest of the paper is organized as follows. Section 2 describes the Riemannian preliminaries and assumptions. Section 3 and 4 introduce the algorithm and prove the proposed algorithms' global convergence and local convergence rate. We also consider the convergence in several special cases. In Section 5, the conclusions and future research topics are given.

Notation and symbols: $\vert A\vert$: the cardinality of set A; $a_n=\mathcal{O}(b_n)$: $\lim\sup_{n \rightarrow \infty}\frac{a_n}{b_n}<\infty$

\section{Preliminaries and Assumption}

\subsection{Preliminaries}

A manifold whose tangent spaces are endowed with a smoothly varying inner product is called a Riemannian manifold. The smoothly varying inner product is called the Riemannian metric \cite{Absil}.
The inner product $g_x$: $T_x\mathcal{M} \times T_x\mathcal{M} \rightarrow \mathcal{R}$.
For the convenience of the following, we let $g_x(\eta_x,\zeta_x)=\langle\eta_x,\zeta_x\rangle_x=\langle\eta_x,\zeta_x\rangle$,
for any $\eta_x,\zeta_x \in T_x\mathcal{M}$. Let $\Vert \eta_x \Vert=\sqrt{\langle\eta_x,\eta_x\rangle}$.
$\mathbb{E}[\cdot \vert \mathcal{F}_t^s]$ denotes the conditional expectation
with respect to the random variable $I_t^s$, where
$\mathcal{F}_t^s=\sigma\{I_0^1,I_1^1,I_2^1,...,I_0^2,I_t^1,I_1^2,I_2^2,...,I_t^2,...,I_0^s,I_1^s,I_2^s,...,I_{t-1}^s\}$
is the $\sigma$-algebra and $I_0^s,s\in{1,2,...,S}$ is equal to complete set.
$\text{grad}f_{I_t^s}(\omega)=\frac{1}{\vert I_t^s \vert}\sum_{i\in I_t^s}\text{grad}f_i(\omega)$,
where $I_t^s\subset \{1,2,...,n\}$ is an index set with cardinality $\vert I_t^s \vert$.
The gradient $\text{grad}f(\omega)$ is defined as the unique element of $T_{\omega}\mathcal{M}$ that satisfies
\begin{equation*}
    Df(\omega)[\xi_{\omega}]=\langle\text{grad}f(\omega),\xi_{\omega}\rangle\qquad \xi_{\omega}\in T_{\omega}\mathcal{M}
\end{equation*}
where $Df(\omega): T_{\omega}\mathcal{M} \rightarrow \mathcal{R} $ is the derivative of $f$ at $\omega$.
The exponential map $Exp_x: T_x\mathcal{M} \rightarrow \mathcal{M}$ maps a tangent vector $\eta_x \in T_x\mathcal{M}$ along the geodesic leading
to $y=Exp_x(\eta_x)\in \mathcal{M}$ such that $\gamma(0)=x,\gamma(1)=y, \gamma'(0)=\eta_x$.
And the distant between $x$ and $y$ denotes $\text{dist}(x,y)=\Vert \eta_x\Vert$. For $\delta>0$, we denote $\mathbb{B}_x(0,\delta)=\{y\in \mathcal{M}\vert \text{dist}(x,y)\leq 1\}$.
In this paper, our analysis focus on retraction mapping and vector transport.
The definition of a retraction is as follows \cite{Absil}.
\begin{definition}
    $R:T\mathcal{M}\rightarrow \mathcal{M}$ is called a retraction on $\mathcal{M}$
    if the restriction $R_{\omega}:T_{\omega}\mathcal{M}\rightarrow \mathcal{M}$ to $T_{\omega}\mathcal{M}$
     for all $\omega \in \mathcal{M}$ satisfies:\\
1. $R_{\omega}(0_{\omega})=\omega $, where $0_{\omega}$ is the zero vector in $T_{\omega}\mathcal{M}$\\
2. $DR_{\omega}(0_{\omega})[\xi]=\xi$, for all $\xi \in T_{\omega}\mathcal{M}$
\end{definition}

Let $\Gamma_x^y$ be the parallel translation operator by the exponential mapping linking $x$ and $y$.
However, parallel translation sometimes computationally expensive, so we consider using vector transport replacing parallel translation.

\begin{definition}
   A vector transport on a mainfold $\mathcal{M}$ is a smooth mapping
   \begin{equation*}
       T\mathcal{M} \otimes T\mathcal{M} \rightarrow T\mathcal{M}:(\eta_{\omega},\xi_{\omega})\mapsto \mathcal{T}_{\eta_{\omega}}(\xi_{\omega}) \in T\mathcal{M}
   \end{equation*}
   satisfying the following diagram properties for all $x\in \mathcal{M}$\\
   1.$\mathcal{T}_{0_{{\omega}}}(\xi)=\xi$, where $\xi\in T_{\omega}\mathcal{M}, {\omega}\in\mathcal{M}$\\
   2.$\mathcal{T}_{\eta_{\omega}}(a\xi_{\omega}+b\theta_{\omega})=a\mathcal{T}_{\eta_{\omega}}(\xi_{\omega})+b\mathcal{T}_{\eta_{\omega}}(\theta_{\omega})$,
  where $a,b\in \mathcal{R}, \eta_x,\xi_x,\theta_x\in T_x\mathcal{M}$
\end{definition}
For the convenience of the following, we let $\mathcal{T}_{\omega}^{R_{\omega}(\eta_{\omega})}(\xi_{\omega})=\mathcal{T}_{\eta_{\omega}}(\xi_{\omega})$, for any $\eta_{\omega},\xi_{\omega} \in T_{\omega}(\mathcal{M})$.
Here, we further introduce the concept of $\tau-$gradient dominated function \cite{Polyak,Nesterov} which will also be used in this paper.
\begin{definition}
 We say function $f: $ $\mathcal{M}\rightarrow \mathcal{R}$ is $\tau-$gradient dominated, if for any $\omega \in \mathcal{M}$, we have $f(\omega)-f(\omega^*) \leq \tau \Vert \text{grad}f(\omega) \Vert^2$, where $\omega^*$ is a global minimizer of $f$.
\end{definition}

\subsection{Assumption}
In this article, we will use following assumptions.\\

\textbf{Assumption 1.a }Function $f$ and its component functions $f_i,i=1,2...n$ are continuously differentiable.\\

\textbf{Assumption 1.b }Function $f$ is thrice continuously differentiable, and its component functions $f_i,i=1,2...n$ are twice continuously differentiable.\\

\textbf{Assumption 2 }Iterate sequences produced by algorithms stay continuously in a compact neighbourhood $\Omega$, where the $\Omega$ is a neighbourhood around $\omega^\ast$.
Additionally, $\Omega$ is a $\rho$-totally retractive neighbourhood of $\omega^\ast$  where retraction $R$ is a diffeomorphism. And for all $t\geq 0,s\geq 1,\tau \in [0,1], R_{\omega_t^s}(-\tau \alpha_t^s V_t^s)\in \Omega$.

The $\rho$-totally neighborhood $\Omega$ of $\omega^\ast$ is a set such that for all $\omega\in \Omega$, $\Omega \subset R_{\omega}(0_{\omega}, \rho)$,
and $R_{\omega}(\cdot)$ is a diffeomorphism on $R_{\omega}(0_{\omega}, \rho)$. Assumption 1.b and 2 are basic for standard analysis.

\textbf{Assumption 3 }The sequence $\{\alpha_t^s\}$ of step sizes satisfies $\sum\limits_{s=1}^{\infty}\sum\limits_{t=0}^{m-1}\alpha_t^s=\infty$ and $\sum\limits_{s=1}^{\infty}\sum\limits_{t=0}^{m-1}(\alpha_t^s)^2<\infty$.

The conditions of assumption 3 are satisfied, for example, $\alpha_t^s=\frac{1}{t+s+1}$.\\

\textbf{Assumption 4 }The vector transport $\mathcal{T}$ is continuous and isometric on $\mathcal{M}$, i.e., for any $\omega\in \mathcal{M}$, $\eta,\xi,\zeta\in T_x\mathcal{M}$, satisfies $\langle \mathcal{T}_\eta\xi,\mathcal{T}_\eta\zeta\rangle=\langle\xi,\zeta\rangle$.

Similar to \cite{HuangW1,HuangW2}, we also can construct an isometric vector transport such that
assumption 4 holds.\\

\textbf{Assumption 5 }$\Gamma_y^x$ is the parallel transport operator from y to x, there exists a constant $M>0$, for any $x,y=R_x(\xi)\in\Omega$, satisfying
$\frac{1} {n}\sum_{i=1}^{n}\Vert\text{grad}f_i(x)-\Gamma_y^x\text{grad}f_i(y)\Vert^2\leq M^2\Vert\xi\Vert^2$.\\

\textbf{Assumption 6 \cite{HuangW1} } Difference between vector transport $\mathcal{T}_x^y$ and parallel transport $\Gamma_x^y$ associated with the same retraction $R$ is bounded.
There exists a constant $\theta>0$, for all $x,y=R_x(\xi)\in\Omega$ and $\eta\in T_xM$, satisfies $\Vert \Gamma_x^y\eta-\mathcal{T}_x^y\eta \Vert\leq \theta \Vert\xi\Vert \Vert\eta\Vert$.\\

\textbf{Assumption 7 }Function $f$ is retraction L-smooth with respect to retraction $R$.
There exists a constant $L>0$, for all $x,y=R_x(\xi)\in\Omega$, satisfies
$f(y)\leq f(x)+\langle\text{grad}f(x),\xi\rangle+\frac{L}{2}\Vert\xi\Vert^2$.\\

\textbf{Assumption 8 \cite{HuangW2} } There exists  $C_1, C_2>0$, and $\delta_{C_1,C_2}>0$, for any $x,y=R_x(\xi)\in\Omega$, when $\Vert \xi\Vert\leq \delta_{C_1,C_2}$, satisfies $\Vert\xi\Vert\leq C_1d(x,y)$, and $d(x,y)\leq C_2\Vert\xi\Vert$.

It is obvious that assumption 8 is a local property.\\

\section{Riemannian Stochastic Hybrid Gradient Algorithm with Adaptive Parameters}
In this section, firstly, we present the R-SHG algorithm with adaptive parameters.
For the case of reduced step size, we qualitatively analyze the convergence of the algorithm.
For the case of fixed step size, we quantitatively research the convergence of the algorithm.
Throughout this section, we consider two special cases.
Now, we propose the first algorithm.\\
\begin{breakablealgorithm}
    \caption{R-SHG algorithm with adaptive parameters}\label{Alg1}
    \begin{algorithmic}[1]
        \State\textbf {Input:} step size {$\alpha_t^s$}, frequency $m>0,0< \mu<1$, parameter $\psi_t^s$, $\phi_t^s$
        \State\textbf {Initialize:}  $\tilde{\omega}^0$;
         \For {for s=1,2,...,S}
        \State Caclulate the full Riemannian gradient $\text{grad}f(\tilde{\omega}^{s-1})$;
        \State Store $\omega_0^s=\tilde{\omega}^{s-1}$, $V_0^s=\text{grad}f(\tilde{\omega}_0^s)$, $\omega_1^s=R_{\omega_0^s}(-\alpha_0^sV_0^s)$
         \For {t=1,2,...,m-1 do}
        \State Choose $I_t^s\in\{1,2,...,n\}$ uniformly at random
        \State Caclulate the value of $\langle \mathcal{T}_{\omega_{t-1}^s}^{\omega_t^s} \big(V_{t-1}^s-\text{grad}f(\omega_{t-1}^s)\big),\text{grad}f(\omega_t^s)\rangle$
          \If{  $\langle \mathcal{T}_{\omega_{t-1}^s}^{\omega_t^s} \big(V_{t-1}^s-\text{grad}f(\omega_{t-1}^s)\big),\text{grad}f(\omega_t^s)\rangle\neq 0$ }
        \State $\tilde{\psi}_t^s=\min\{\psi_t^s,\frac{\mu\Vert \text{grad}f(\omega_t^s)\Vert^2}{\big\vert\langle \mathcal{T}_{\omega_{t-1}^s}^{\omega_t^s} (V_{t-1}^s-\text{grad}f(\omega_{t-1}^s)),\text{grad}f(\omega_t^s)\rangle\big\vert}\}$
          \ElsIf
        \State\,$\tilde{\psi}_t^s=\psi_t^s$
           \EndIf
        \State \begin{eqnarray}\label{Alg1.1}
            &V_t^s&=\phi_t^s\Big(\text{grad}f_{I_t^s}(\omega_t^s)-\mathcal{T}_{\omega_0^s}^{\omega_t^s} \big(\text{grad}f_{I_t^s}(\omega_0^s)-\text{grad}f(\omega_0^s)\big) \Big)\cr
            &&+\tilde{\psi}_t^s\Big(\text{grad}f_{I_t^s}(\omega_t^s)-\mathcal{T}_{\omega_{t-1}^s}^{\omega_t^s} \big(\text{grad}f_{I_t^s}(\omega_{t-1}^s)-V_{t-1}^s\big) \Big)\cr
            &&+(1-\phi_t^s-\tilde{\psi}_t^s)\text{grad}f_{I_t^s}(\omega_t^s)
            \end{eqnarray}
        \State $\omega_{t+1}^s=R_{\omega_t^s}(-\alpha_t^sV_t^s)$
         \EndFor
        \State $\tilde{\omega}^s=\omega_m^s$
         \EndFor
         \State We choose: option 1: ${\omega}_a=\tilde{\omega}^S$.
        \State We choose: option 2: ${\omega}_a$ uniformly randomly from $\{\{\omega_t^s\}_{t=0}^{m-1}\}_{s=1}^S$
         \State\textbf {Output:} ${\omega}_a$
\end{algorithmic}
\end{breakablealgorithm}

In the algorithm, we require that the parameters $\psi_t^s, \phi_t^s$ must be satisfies $0\leq \psi_t^s+\phi_t^s\leq 1$ and $\psi_t^s,\phi_t^s\geq 0$. In this paper, we always suppose the step size and the parameters are positive. By the definition of $\tilde{\psi}_t^s$, then we can get following inequalities.
\begin{lemma}\label{L0}
    Let $\tilde{\psi}_t^s$ be definited by the algorithm 1, then
    \begin{equation}\label{L0.1}
    -\mu\Vert \text{grad}f(\omega_t^s)\Vert^2\leq \tilde{\psi}_t^s\langle \mathcal{T}_{\omega_{t-1}^s}^{\omega_t^s} \big(V_{t-1}^s-\text{grad}f(\omega_{t-1}^s)\big),\text{grad}f(\omega_t^s)\rangle\leq \mu_t^s\Vert \text{grad}f(\omega_t^s)\Vert^2
    \end{equation}
    Especially, if $\psi_t^s\equiv 0$, then $\tilde{\psi}_t^s\equiv 0$, the inequalities also hold for any $0< \mu <1$.
\end{lemma}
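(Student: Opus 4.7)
The plan is to unfold the definition of $\tilde{\psi}_t^s$ from Algorithm~\ref{Alg1} and carry out a direct case analysis on whether the inner product $\langle \mathcal{T}_{\omega_{t-1}^s}^{\omega_t^s}(V_{t-1}^s - \text{grad}f(\omega_{t-1}^s)), \text{grad}f(\omega_t^s)\rangle$ vanishes. As a preliminary observation I would record that $\tilde{\psi}_t^s \geq 0$ in either branch of the definition: in the nonvanishing branch it is the minimum of $\psi_t^s \geq 0$ and a ratio of two nonnegative quantities, and in the vanishing branch it equals $\psi_t^s \geq 0$. This nonnegativity is the fact that will later allow me to convert an absolute-value bound into a symmetric signed bound.

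In the vanishing case, the middle expression of \eqref{L0.1} is $\tilde{\psi}_t^s \cdot 0 = 0$, which trivially lies in the interval $[-\mu\Vert \text{grad}f(\omega_t^s)\Vert^2,\, \mu\Vert \text{grad}f(\omega_t^s)\Vert^2]$. In the nonvanishing case, I would rearrange the defining min-inequality into
\[
\tilde{\psi}_t^s \bigl|\langle \mathcal{T}_{\omega_{t-1}^s}^{\omega_t^s}(V_{t-1}^s - \text{grad}f(\omega_{t-1}^s)), \text{grad}f(\omega_t^s)\rangle\bigr| \leq \mu\Vert \text{grad}f(\omega_t^s)\Vert^2,
\]
and then sandwich the signed inner product between $\pm$ its absolute value, using $\tilde{\psi}_t^s \geq 0$ to preserve the direction of the inequality. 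This immediately produces the two-sided estimate \eqref{L0.1}.

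For the special claim, when $\psi_t^s \equiv 0$ I would note that the nonvanishing branch gives $\tilde{\psi}_t^s = \min\{0,\,\text{nonnegative}\} = 0$, while the vanishing branch gives $\tilde{\psi}_t^s = \psi_t^s = 0$, so $\tilde{\psi}_t^s \equiv 0$ regardless of the dichotomy. The middle quantity in \eqref{L0.1} is therefore identically zero, so the double inequality $0 \leq 0 \leq 0$ holds trivially for every $0 < \mu < 1$, independently of which admissible value of $\mu$ is used in the algorithm.

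Because the argument is just a direct unpacking of the $\min$ together with a two-way case split, there is no substantive obstacle. The only step that deserves explicit verification is the nonnegativity of $\tilde{\psi}_t^s$, since this is what legitimates moving from the absolute-value bound on $\tilde{\psi}_t^s |\langle \cdot,\cdot\rangle|$ to the signed two-sided bound in \eqref{L0.1}.
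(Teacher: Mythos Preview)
Your proposal is correct and follows essentially the same idea as the paper's proof: both arguments unpack the $\min$ in the definition of $\tilde{\psi}_t^s$ to bound the product. The only cosmetic difference is that the paper splits on the \emph{sign} of the inner product (treating the cases $\langle\cdot,\cdot\rangle>0$ and $\langle\cdot,\cdot\rangle<0$ separately), whereas you split on zero versus nonzero and handle both signs at once via the absolute value; your packaging is slightly more compact but the content is identical.
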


This result is important in the convergence of the algorithm.
Normally, we can not determine the positive or negative number of $\langle \mathcal{T}_{\omega_{t-1}^s}^{\omega_t^s} \big(V_{t-1}^s-\text{grad}f(\omega_{t-1}^s)\big),\text{grad}f(\omega_t^s)\rangle$.
Therefore, we adjust $\tilde{\psi}_t^s$ to make $\tilde{\psi}_t^s\langle \mathcal{T}_{\omega_{t-1}^s}^{\omega_t^s} \big(V_{t-1}^s-\text{grad}f(\omega_{t-1}^s)\big),\text{grad}f(\omega_t^s)\rangle$ is bounded and sufficiently small.
The proof of lemma $\mathrm{\ref{L0}}$ is given in Appendix $\mathrm{\ref{secA}}$.

\subsection{Step size is reduced}

We first prove the following lemma of the estimator $V_t^s$.  This lamma palys an important role in this section.
\begin{lemma}\label{L1}
    Let $V_t^s$ be definited by algorithm 1, then,
    \begin{equation}\label{L1.1}
        \mathbb{E}[V_t^s \vert \mathcal{F} _t^s]=\text{grad} f(\omega_t^s)+\tilde{\psi}_t^s\mathcal{T}_{\omega_{t-1}^s}^{\omega_t^s} \big(V_{t-1}^s-\text{grad}f(\omega_{t-1}^s)\big)
    \end{equation}
    Furthermore, if $\psi_t^s\neq0$, then $V_t^s$ is a biased estimate.
\end{lemma}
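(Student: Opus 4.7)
}
The plan is to take the conditional expectation of the definition (\ref{Alg1.1}) of $V_t^s$ with respect to $\mathcal{F}_t^s$ term by term, using only two facts: (i) everything in the expression except $I_t^s$ is $\mathcal{F}_t^s$-measurable, and (ii) $I_t^s$ is chosen uniformly at random, so that conditional on $\mathcal{F}_t^s$, $\mathbb{E}[\text{grad}f_{I_t^s}(\omega)] = \text{grad}f(\omega)$ for any $\mathcal{F}_t^s$-measurable point $\omega$.

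First I would check measurability: the iterates $\omega_0^s$, $\omega_{t-1}^s$, $\omega_t^s$, the previous estimator $V_{t-1}^s$, the deterministic coefficients $\phi_t^s$ and $\psi_t^s$, and the data-dependent coefficient $\tilde{\psi}_t^s$ (which is a function of $\omega_{t-1}^s, \omega_t^s, V_{t-1}^s, \psi_t^s$ only) are all $\mathcal{F}_t^s$-measurable because $\mathcal{F}_t^s$ contains all index draws through step $t-1$ of epoch $s$. Hence they may be pulled outside the conditional expectation, and the vector transports $\mathcal{T}_{\omega_0^s}^{\omega_t^s}$ and $\mathcal{T}_{\omega_{t-1}^s}^{\omega_t^s}$ may likewise be treated as fixed linear operators.

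Next I apply the uniform-sampling identity to each of the three stochastic gradient pieces: $\mathbb{E}[\text{grad}f_{I_t^s}(\omega_t^s)\mid \mathcal{F}_t^s] = \text{grad}f(\omega_t^s)$, and similarly for evaluations at $\omega_0^s$ and $\omega_{t-1}^s$. Then the first parenthesized block of (\ref{Alg1.1}) collapses to $\phi_t^s\,\text{grad}f(\omega_t^s)$ because the variance-reduction correction $\mathcal{T}_{\omega_0^s}^{\omega_t^s}(\text{grad}f_{I_t^s}(\omega_0^s) - \text{grad}f(\omega_0^s))$ has zero conditional mean by linearity of the vector transport. The second block becomes $\tilde{\psi}_t^s\bigl(\text{grad}f(\omega_t^s) - \mathcal{T}_{\omega_{t-1}^s}^{\omega_t^s}(\text{grad}f(\omega_{t-1}^s) - V_{t-1}^s)\bigr)$, because $V_{t-1}^s$ is already $\mathcal{F}_t^s$-measurable and stays inside as a deterministic vector. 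The third block contributes $(1-\phi_t^s-\tilde{\psi}_t^s)\text{grad}f(\omega_t^s)$. Summing the three contributions, the $\phi_t^s$ and $\tilde{\psi}_t^s$ coefficients on $\text{grad}f(\omega_t^s)$ cancel into a $1$, leaving $\text{grad}f(\omega_t^s) + \tilde{\psi}_t^s\mathcal{T}_{\omega_{t-1}^s}^{\omega_t^s}(V_{t-1}^s - \text{grad}f(\omega_{t-1}^s))$, which is (\ref{L1.1}).

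Finally, for the bias claim I would simply observe that an estimator is unbiased when $\mathbb{E}[V_t^s\mid \mathcal{F}_t^s] = \text{grad}f(\omega_t^s)$, and by the identity just derived this happens only if $\tilde{\psi}_t^s\mathcal{T}_{\omega_{t-1}^s}^{\omega_t^s}(V_{t-1}^s - \text{grad}f(\omega_{t-1}^s)) = 0$; in general this residual need not vanish whenever $\psi_t^s\neq 0$ (and hence $\tilde{\psi}_t^s$ may be positive), giving the stated bias. There is no real obstacle here: the only thing that could mildly trip up a careless reader is verifying that $\tilde{\psi}_t^s$ is $\mathcal{F}_t^s$-measurable despite being data-adaptive, which is clear from its definition in the algorithm since it depends only on quantities determined before $I_t^s$ is drawn.
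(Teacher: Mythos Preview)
Your proposal is correct and follows essentially the same approach as the paper: both arguments verify that everything except $I_t^s$ is $\mathcal{F}_t^s$-measurable, invoke the uniform-sampling identity $\mathbb{E}[\text{grad}f_{I_t^s}(\omega)\mid\mathcal{F}_t^s]=\text{grad}f(\omega)$, and then compute the conditional expectation of (\ref{Alg1.1}) term by term. Your explicit remark that $\tilde{\psi}_t^s$ is $\mathcal{F}_t^s$-measurable (despite being data-adaptive) is a useful clarification that the paper leaves implicit.
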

\begin{proof}
    Suppose for any $\omega \in \Omega$ and $\omega$ is $\mathcal{F}_t^s$ measurable. By the definition of $I_t^s$ and $\mathcal{F}_t^s$, we get
  \begin{equation}\label{L1.2}
      \mathbb{E}[\text{grad}f_{I_t^s}(\omega) \vert \mathcal{F}_t^s]=\frac{1}{b}\cdot\frac{C_{n-1}^{b-1}}{C_n^b}(\sum\limits_{i=1}^n\text{grad}f_i(\omega))
      =\frac{1}{n}\sum\limits_{i=1}^n\text{grad}f_i(\omega)=\text{grad}f(\omega)
  \end{equation}
  Since $V_{t-1}^s$ is $\mathcal{F}_t^s$ measurable and $\mathcal{T}_{\omega_{t-1}^s}^{\omega_t^s}=\mathcal{T}_{\alpha_{t-1}^s V_{t-1}^s}$, we obtain that $\mathcal{T}_{\omega_{t-1}^s}^{\omega_t^s}$ is also $\mathcal{F}_t^s$ measurable.
  This together with \eqref{L1.2} gives
  \begin{eqnarray}\label{L1.3}
    &&\mathbb{E}[V_t^s \vert \mathcal{F} _t^s]\cr
    &=& \mathbb{E}[\phi_t^s\Big(\text{grad}f_{I_t^s}(\omega_t^s)-\mathcal{T}_{\omega_0^s}^{\omega_t^s} \big(\text{grad}f_{I_t^s}(\omega_0^s)-\text{grad}f(\omega_0^s)\big) \Big)\cr
    &&+\tilde{\psi}_t^s\Big(\text{grad}f_{I_t^s}(\omega_t^s)-\mathcal{T}_{\omega_{t-1}^s}^{\omega_t^s} \big(\text{grad}f_{I_t^s}(\omega_{t-1}^s)-V_{t-1}^s\big) \Big)\cr
    &&+(1-\phi_t^s-\tilde{\psi}_t^s)\text{grad}f_{I_t^s}(\omega_t^s) \vert \mathcal{F} _t^s]\cr
    &=&\phi_t^s\mathbb{E}[\text{grad}f_{I_t^s}(\omega_t^s)-\mathcal{T}_{\omega_0^s}^{\omega_t^s} \big(\text{grad}f_{I_t^s}(\omega_0^s)-\text{grad}f(\omega_0^s)\big) \vert \mathcal{F} _t^s]\cr
    &&+\tilde{\psi}_t^s\mathbb{E}[\text{grad}f_{I_t^s}(\omega_t^s)-\mathcal{T}_{\omega_{t-1}^s}^{\omega_t^s} \big(\text{grad}f_{I_t^s}(\omega_{t-1}^s)-V_{t-1}^s\big) \vert \mathcal{F} _t^s]\cr
    &&+(1-\phi_t^s-\tilde{\psi}_t^s)\mathbb{E}[\text{grad}f_{I_t^s}(\omega_t^s) \vert \mathcal{F} _t^s]\cr
    &=&\phi_t^s\mathbb{E}[\text{grad}f_{I_t^s}(\omega_t^s) \vert \mathcal{F} _t^s]-\mathcal{T}_{\omega_0^s}^{\omega_t^s} \big(\mathbb{E}[\text{grad}f_{I_t^s}(\omega_0^s \vert \mathcal{F} _t^s])-\text{grad}f(\omega_0^s)\big) \vert \mathcal{F} _t^s]\cr
    &&+\tilde{\psi}_t^s\mathbb{E}[\text{grad}f_{I_t^s}(\omega_t^s) \vert \mathcal{F} _t^s]-\mathcal{T}_{\omega_{t-1}^s}^{\omega_t^s} \big(\mathbb{E}[\text{grad}f_{I_t^s}(\omega_{t-1}^s) \vert \mathcal{F} _t^s]-V_{t-1}^s\big) \vert \mathcal{F} _t^s]\cr
    &&+(1-\phi_t^s-\tilde{\psi}_t^s)\mathbb{E}[\text{grad}f_{I_t^s}(\omega_t^s) \vert \mathcal{F} _t^s]\cr
    &=&\text{grad} f(\omega_t^s)+\tilde{\psi}_t^s\mathcal{T}_{\omega_{t-1}^s}^{\omega_t^s} \big(V_{t-1}^s-\text{grad}f(\omega_{t-1}^s)\big)
  \end{eqnarray}
\end{proof}

Now, we give the mean-square convergence of the proposed algorithm under the assumptions.
\begin{theorem}\label{T1}
  Suppose the assumption 1.b and assumption 2-4 hold. The sequences $\{\omega_t^s\}$ produced by algorithm 1, then $\{\mathbb{E}[\Vert\text{grad}f(\omega_t^s)\Vert^2]\}\rightarrow 0$
\end{theorem}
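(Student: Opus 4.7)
My plan is to use $f$ itself as a stochastic Lyapunov function in a standard Robbins--Siegmund style argument, with Lemma~\ref{L1} to expand the conditional expectation of $\langle \text{grad}f(\omega_t^s), V_t^s\rangle$ and Lemma~\ref{L0} to tame the bias that the R-SRG component injects into $V_t^s$.

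\textbf{Step 1 (descent inequality).} Assumption~1.b makes $f$ of class $C^2$, and the compactness of $\Omega$ (Assumption~2) then supplies an $L$ such that $f$ satisfies a retraction-type $L$-smooth bound on $\Omega$:
\[
f(\omega_{t+1}^s) \le f(\omega_t^s) - \alpha_t^s \langle \text{grad}f(\omega_t^s), V_t^s\rangle + \frac{L}{2}(\alpha_t^s)^2 \|V_t^s\|^2.
\]
Conditioning on $\mathcal{F}_t^s$, Lemma~\ref{L1} rewrites the linear term as $-\alpha_t^s \|\text{grad}f(\omega_t^s)\|^2 - \alpha_t^s \tilde\psi_t^s \langle \mathcal{T}_{\omega_{t-1}^s}^{\omega_t^s}(V_{t-1}^s - \text{grad}f(\omega_{t-1}^s)), \text{grad}f(\omega_t^s)\rangle$, and Lemma~\ref{L0} upper-bounds the bias piece by $\mu\,\alpha_t^s\|\text{grad}f(\omega_t^s)\|^2$. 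The resulting descent estimate is
\[
\mathbb{E}[f(\omega_{t+1}^s) \mid \mathcal{F}_t^s] \le f(\omega_t^s) - (1-\mu)\alpha_t^s \|\text{grad}f(\omega_t^s)\|^2 + \frac{L}{2}(\alpha_t^s)^2 \mathbb{E}[\|V_t^s\|^2 \mid \mathcal{F}_t^s].
\]

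\textbf{Step 2 (uniform bound on $V_t^s$ and telescoping).} Continuity of each $\text{grad}f_i$ on the compact $\Omega$ yields $\|\text{grad}f_i(\omega)\|\le G$. By the isometry of $\mathcal{T}$ (Assumption~4), the R-SVRG and R-SGD summands in \eqref{Alg1.1} are bounded by multiples of $G$, while the R-SRG summand is bounded by $2G + \|V_{t-1}^s\|$. Since $\tilde\psi_t^s \in [0,1]$ and $\|V_0^s\|=\|\text{grad}f(\tilde\omega^{s-1})\|\le G$, a finite-length induction inside each inner loop gives a constant $C_m$, depending only on $G$ and the fixed $m$, with $\|V_t^s\|\le C_m$. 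Substituting this into Step~1, taking full expectations and summing over all $(s,t)$ gives
\[
(1-\mu)\sum_{s=1}^{\infty}\sum_{t=0}^{m-1} \alpha_t^s\,\mathbb{E}[\|\text{grad}f(\omega_t^s)\|^2] \;\le\; f(\tilde\omega^0) - \inf_{\Omega} f + \frac{L C_m^2}{2}\sum_{s,t}(\alpha_t^s)^2,
\]
which is finite by Assumption~3; combined with $\sum_{s,t}\alpha_t^s=\infty$, this forces $\liminf \mathbb{E}[\|\text{grad}f(\omega_t^s)\|^2]=0$.

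\textbf{Step 3 (upgrading $\liminf$ to $\lim$).} I expect this last step to be the main obstacle, and it is where the thrice-differentiability in Assumption~1.b is genuinely used: it makes $\|\text{grad}f\|^2$ of class $C^2$ on $\mathcal{M}$, hence Lipschitz on the compact $\Omega$, so
\[
\bigl|\|\text{grad}f(\omega_{t+1}^s)\|^2 - \|\text{grad}f(\omega_t^s)\|^2\bigr| \le L'\,d(\omega_t^s,\omega_{t+1}^s) \le L'C'\,\alpha_t^s\|V_t^s\| \le K\alpha_t^s,
\]
whose expectation is $\mathcal{O}(\alpha_t^s)$. The standard finish is then: if $\limsup \mathbb{E}[\|\text{grad}f(\omega_t^s)\|^2]$ exceeded some $\varepsilon>0$, then between each time the expected squared gradient is below $\varepsilon/4$ and the next time it exceeds $\varepsilon/2$, a cumulative step-size of at least $\varepsilon/(4K)$ must be spent while the expected squared gradient stays at least $\varepsilon/4$; infinitely many such excursions would contribute an infinite amount to the weighted sum bounded in Step~2, a contradiction.
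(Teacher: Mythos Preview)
Your Steps~1--2 track the paper's argument closely: the paper also bounds $\|V_t^s\|$ by induction (it claims a uniform bound $3N$ rather than an $m$-dependent constant, but your $C_m$ is perfectly adequate since $m$ is fixed), derives the same conditional descent inequality via Lemma~\ref{L1} and Lemma~\ref{L0}, telescopes, and concludes $\sum_{s,t}\alpha_t^s\,\mathbb{E}[\|\text{grad}f(\omega_t^s)\|^2]<\infty$ and hence $\liminf=0$.

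Step~3 is where you genuinely diverge from the paper. The paper performs a second-order Taylor expansion of $\|\text{grad}f\|^2$ along the retraction, obtaining a recursion of the form
\[
\mathbb{E}[\|\text{grad}f(\omega_{t+1}^s)\|^2]\le \mathbb{E}[\|\text{grad}f(\omega_t^s)\|^2]+2|\gamma_2|(1+\mu)\alpha_t^s\,\mathbb{E}[\|\text{grad}f(\omega_t^s)\|^2]+9N^2\gamma_1(\alpha_t^s)^2,
\]
where $\gamma_1,\gamma_2$ are eigenvalue bounds on Hessians over $\Omega$; the two added terms are summable by Step~2 and Assumption~3, and the Robbins--Siegmund lemma (Lemma~\ref{L2}) then yields convergence of the sequence $\mathbb{E}[\|\text{grad}f(\omega_t^s)\|^2]$. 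Your route is more elementary: you use only that $\|\text{grad}f\|^2$ is Lipschitz on $\Omega$ to get $|a_{k+1}-a_k|\le K\alpha_k$ for $a_k=\mathbb{E}[\|\text{grad}f(\omega_k)\|^2]$, and finish with the standard level-crossing/excursion contradiction. Both arguments are correct; yours avoids the Hessian bookkeeping and the appeal to Lemma~\ref{L2}, while the paper's approach produces an explicit almost-supermartingale structure that it reuses verbatim in the almost-sure version (Theorem~\ref{T2}) via Lemma~\ref{L3}. One small remark: your Lipschitz step uses $d(\omega_t^s,\omega_{t+1}^s)\le C'\alpha_t^s\|V_t^s\|$, which is not among the listed assumptions but does follow from smoothness of $R$ and compactness of $\Omega$ once $\alpha_t^s\|V_t^s\|$ is bounded; it would be worth saying so explicitly.
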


To prove theorem $\mathrm{\ref{T1}}$, we need the following two lemmas. These two lemmas are very useful in stochastic algorithms.

\begin{lemma}[\cite{RobbinsH1}]\label{L2}
    Let $\{x(k),\mathcal{F}(k)\}$, $\{\alpha(k),\mathcal{F}(k)\}$, $\{\beta(k),\mathcal{F}(k)\}$ and $\{\gamma(k),\mathcal{F}(k)\}$ be nonnegative adaptive
    sequences satisfying
    \begin{equation*}
           \mathbb{E}[x(k+1)\vert \mathcal{F}(k)] \leq (1+\alpha(k))x(k)-\beta(k)+\gamma(k),k\geq a.s.
    \end{equation*}
    if $\sum\limits_{k=0}^{\infty}(\alpha(k)+\gamma(k))<\infty$ a.s, then $x(k)$ converges to a
    finite random variable a.s. and $\sum\limits_{k=0}^{\infty}\beta(k)<\infty$ a.s.
\end{lemma}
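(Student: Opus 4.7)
The plan is to follow the standard Robbins--Siegmund argument: reduce the recursive inequality to a nonnegative supermartingale and then invoke Doob's almost-sure convergence theorem. Three obstacles appear in the stated inequality that must be dealt with in turn: (i) the multiplicative factor $1+\alpha(k)$ in front of $x(k)$, (ii) the nonnegative sink term $\beta(k)$ whose summability is part of the conclusion, and (iii) the noise term $\gamma(k)$, which we will absorb using its almost sure summability.

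First, to remove the $(1+\alpha(k))$ factor I would introduce the predictable weights $U(k)=\prod_{j=0}^{k-1}(1+\alpha(j))^{-1}$. These are $\mathcal{F}(k-1)$-measurable, nonincreasing, and positive; the hypothesis $\sum_k \alpha(k)<\infty$ a.s.\ forces $U(\infty)\geq\exp(-\sum_k\alpha(k))>0$ almost surely. Multiplying the hypothesis by $U(k+1)$ and using $U(k+1)(1+\alpha(k))=U(k)$ yields
\begin{equation*}
\mathbb{E}[U(k+1)x(k+1)\mid\mathcal{F}(k)]\leq U(k)x(k)-U(k+1)\beta(k)+U(k+1)\gamma(k).
\end{equation*}
Second, to turn the $-\beta(k)$ contribution into something monotone, I would set $X(k):=U(k)x(k)+\sum_{j=0}^{k-1}U(j+1)\beta(j)$, which is nonnegative and $\mathcal{F}(k)$-measurable; the previous display reorganizes as $\mathbb{E}[X(k+1)\mid\mathcal{F}(k)]\leq X(k)+U(k+1)\gamma(k)\leq X(k)+\gamma(k)$. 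Third, writing $S(k):=\sum_{j=0}^{k-1}\gamma(j)$, the centered process $Y(k):=X(k)-S(k)$ satisfies $\mathbb{E}[Y(k+1)\mid\mathcal{F}(k)]\leq Y(k)$, hence is a supermartingale bounded below by the almost surely finite random variable $-S(\infty)$. Almost-sure convergence of $Y(k)$ to a finite limit then yields the two conclusions of the lemma through $X(k)=Y(k)+S(k)$ and $x(k)=\big(X(k)-\sum_{j<k}U(j+1)\beta(j)\big)/U(k)$, combined with $U(k)\to U(\infty)>0$ a.s.

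The subtle step is the passage from ``$Y(k)$ is a supermartingale bounded below by $-S(\infty)$ a.s.'' to ``$Y(k)$ converges a.s.\ to a finite limit,'' because we only know $S(\infty)<\infty$ almost surely, not in $L^1$, so the textbook form of Doob's theorem does not apply directly. I would resolve this by a standard localization: for each $N\geq 1$ set $\tau_N:=\inf\{k:S(k+1)>N\}$; then the stopped process $Y(k\wedge\tau_N)+N$ is a nonnegative supermartingale and hence converges a.s.\ by the classical theorem. Since $\{\tau_N=\infty\}\uparrow\{S(\infty)<\infty\}$ has probability one as $N\to\infty$, almost sure convergence of the unstopped $Y(k)$ follows. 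Everything else---rearranging inequalities, and using $U(\infty)>0$ a.s.\ to translate $\sum_j U(j+1)\beta(j)<\infty$ into $\sum_j\beta(j)<\infty$---is mechanical bookkeeping.
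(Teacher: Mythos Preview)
Your argument is correct and is essentially the classical Robbins--Siegmund proof. Note, however, that the paper does not give its own proof of this lemma: it is stated as a cited result from \cite{RobbinsH1} and used as a black box in the proofs of Theorems~\ref{T1} and~\ref{T2}. There is therefore no ``paper's proof'' to compare against; your write-up simply supplies the standard justification that the paper omits by citation.
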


\begin{lemma}[\cite{Fisk}]\label{L3}
    Let $\{x(k)\}$ be a nonnegative stochastic process with bounded positive variations,
    i.e., $\sum\limits_{k=0}^{\infty}\mathbb{E}[\mathbb{E}[x(k+1)-x(k)\vert \mathcal{F}(k)]^+]<\infty$,
    where $x^+=\max\{0,x\}$,
    Then $x(k)$ is a quasi martingale, i.e.,
    \begin{equation*}
        \sum\limits_{k=0}^{\infty}\vert \mathbb{E}[x(k+1)-x(k)\vert \mathcal{F}(k)]\vert<\infty \,a.s.\qquad x(k)\text{converges }\,a.s
    \end{equation*}
\end{lemma}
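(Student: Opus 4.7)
The plan is to apply a Doob decomposition to split $x(k)$ into a martingale plus a predictable compensator, then isolate the increasing and decreasing parts of the compensator. Let $D(k):=\mathbb{E}[x(k+1)-x(k)\mid\mathcal{F}(k)]$ and write $D(k)=D(k)^{+}-D(k)^{-}$ with both summands nonnegative. The hypothesis gives $\sum_{k}\mathbb{E}[D(k)^{+}]<\infty$, so by Tonelli the increasing predictable process $U(k):=\sum_{j=0}^{k-1}D(j)^{+}$ has $\mathbb{E}[U_{\infty}]<\infty$, hence $U(k)\uparrow U_{\infty}$ a.s.\ with $U_{\infty}$ integrable. Set $V(k):=\sum_{j=0}^{k-1}D(j)^{-}$ for the symmetric negative part; I will reduce both conclusions of the lemma to showing $V_{\infty}<\infty$ a.s.

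First I would verify that $Y(k):=x(k)-U(k)$ is a supermartingale. A direct calculation gives $\mathbb{E}[Y(k+1)\mid\mathcal{F}(k)]=x(k)+D(k)-U(k)-D(k)^{+}=Y(k)-D(k)^{-}\le Y(k)$. Because $x(k)\ge 0$ and $U(k)\le U_{\infty}$, one has $Y(k)\ge -U_{\infty}$ where the lower bound is integrable. Applying Doob's supermartingale convergence theorem to the nonnegative process $Y(k)+U_{\infty}$, I conclude that $Y(k)$ converges a.s.\ to a finite limit. Combined with $U(k)\to U_{\infty}$ a.s., the decomposition $x(k)=Y(k)+U(k)$ yields the a.s.\ convergence of $x(k)$, which is the second assertion of the lemma.

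For the quasi-martingale statement I need to control $V(k)$. Taking expectations in $x(k+1)-x(k)=D(k)+(\text{martingale increment})$ and telescoping gives $\mathbb{E}[x(k)]=\mathbb{E}[x(0)]+\mathbb{E}[U(k)]-\mathbb{E}[V(k)]$. Since $x(k)\ge 0$, this forces $\mathbb{E}[V(k)]\le\mathbb{E}[x(0)]+\mathbb{E}[U_{\infty}]$ uniformly in $k$. Because $V(k)$ is nondecreasing, monotone convergence gives $\mathbb{E}[V_{\infty}]<\infty$, hence $V_{\infty}<\infty$ a.s. Then $\sum_{k=0}^{\infty}|D(k)|=U_{\infty}+V_{\infty}<\infty$ a.s., which is exactly the quasi-martingale conclusion.

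The main obstacle is not depth but care: I have to ensure that the lower bound on the supermartingale $Y(k)$ is an \emph{integrable} random variable, not merely one that is a.s.\ finite, or Doob's theorem does not apply. This is precisely why the hypothesis is phrased as $\sum_{k}\mathbb{E}[D(k)^{+}]<\infty$ and why I invoke Tonelli before passing to the supermartingale convergence theorem. Once that integrability is in hand, the proof is essentially a bookkeeping exercise in telescoping.
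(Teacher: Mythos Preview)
The paper does not prove this lemma at all: it is quoted verbatim from Fisk's paper on quasi-martingales and used as a black box in the proof of Theorem~\ref{T2}. So there is no ``paper's own proof'' to compare against, and your proposal stands as an independent proof of a cited result.

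On its merits, your argument is the standard Doob-decomposition route and is essentially correct. One small technical slip: you cannot literally apply the supermartingale convergence theorem to $Y(k)+U_{\infty}$, because $U_{\infty}$ is $\mathcal{F}_{\infty}$-measurable, not $\mathcal{F}(k)$-measurable, so the shifted process is not adapted. The clean fix is the one you already have the ingredients for: from $Y(k)=x(k)-U(k)\ge -U(k)\ge -U_{\infty}$ you get $Y(k)^{-}\le U_{\infty}$, hence $\sup_{k}\mathbb{E}[Y(k)^{-}]\le\mathbb{E}[U_{\infty}]<\infty$, which is exactly the $L^{1}$ lower-bound hypothesis in Doob's theorem. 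With that correction the rest of your bookkeeping (the telescoping bound $\mathbb{E}[V(k)]\le\mathbb{E}[x(0)]+\mathbb{E}[U_{\infty}]$ and the conclusion $\sum_{k}|D(k)|=U_{\infty}+V_{\infty}<\infty$ a.s.) goes through.
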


\noindent
\textbf{Proof of Theorem $\mathrm{\ref{T1}}$}
\begin{proof}
    Since $\Omega$ is compact, all continuous functions on $\Omega$ can be bounded.
    Hence, there exists a positive constant $N$, for all $\omega\in\Omega$, such that $\Vert\text{grad}f(\omega)\Vert\leq N$ and $\Vert\text{grad}f_i(\omega)\Vert\leq N,i=1,2,...,n$.
    We reindex the sequence $\{\alpha_t^s\},\{\omega_t^s\}$ as $\{x_0^1,x_1^1,..,x_{m-1}^1,x_0^2,x_1^2,...x_{m-1}^2,...\}$.
    From assumption 3, there exists $s_0$ such that for $s\geq s_0$, we have $\alpha_t^s\leq 1$, for any $t\in\{0,1,...,m-1\}$.
    Next we will use the mathematical induction proof that $s\geq s_0$, $\Vert V_t^s\Vert\leq 3N$.\\
    If $t=0$, $\Vert V_0^s\Vert=\Vert\text{grad}f(\omega_0^s)\Vert\leq N \leq 3N$, the conclusion is hold.\\
    Suppose that the conclusion is hold for any $t-1$, according to $\mathrm{(\ref{Alg1.1})}$ and assumption 4, we get
    \begin{eqnarray}\label{T1.P.1}
            \Vert V_t^s \Vert &\leq& \Vert\phi_t^s  \big(\text{grad}f_{I_t^s}(\omega_t^s)-\mathcal{T}_{\omega_0^s}^{\omega_t^s} \big(\text{grad}f_{I_t^s}(\omega_0^s)-\text{grad}f(\omega_0^s)\big) \big)\Vert\cr
              &&+\Vert\tilde{\psi}_t^s  \big(\text{grad}f_{I_t^s}(\omega_t^s)-\mathcal{T}_{\omega_{t-1}^s}^{\omega_t^s} \big(\text{grad}f_{I_t^s}(\omega_{t-1}^s)-V_{t-1}^s\big) \big)\Vert\cr
              &&+\Vert(1-\phi_t^s-\tilde{\psi}_t^s)  \text{grad}f_{I_t^s}(\omega_t^s)\Vert\cr
              &\leq& \phi_t^s  \big(\Vert\text{grad}f_{I_t^s}(\omega_t^s)\Vert+\Vert\big(\text{grad}f_{I_t^s}(\omega_0^s)\Vert+\Vert\text{grad}f(\omega_0^s)\Vert\big)\cr
              &&+\tilde{\psi}_t^s \big(\Vert\text{grad}f_{I_t^s}(\omega_t^s)\Vert+\Vert\text{grad}f_{I_t^s}(\omega_{t-1}^s)\Vert+\Vert V_{t-1}^s\Vert \big)\cr
              &&+(1-\phi_t^s-\tilde{\psi}_t^s) \Vert \text{grad}f_{I_t^s}(\omega_t^s)\Vert\cr
              &\leq& 3N
    \end{eqnarray}
    Then $s\geq s_0$, we have $\Vert V_t^s\Vert\leq 3N$. Denote $\Omega'=[0,1] \times\{(\omega,V)\vert \omega\in \Omega,V\in\mathbb{B}_{\omega}(0,3N)\}$.
  Defining $h(\tau,\omega,V):\Omega'\rightarrow\mathcal{R}$, $h(\tau,\omega,V):=(f\circ R_w)(-\tau V)$
  , from assumption 1.b and $\Omega'$ is a compact, there exists a constant $N'>0$
  such that $\vert \frac{\partial^2}{\partial \tau^2}h(\tau,\omega,V\vert)\leq N'$. By the Taylor expansion
  \begin{eqnarray}\label{T1.P.2}
    &f(\omega_{t+1}^s)-f(\omega_{t}^s)&=f\circ R_{\omega_{t}^s}(-\alpha_t^sV_t^s))-f\circ R_{\omega_{t}^s}(0)\cr
    &&=h(1,\omega_t^s,\alpha_t^sV_t^s)-h(0,\omega_t^s,vV_t^s)\cr
    &&=\frac{\partial}{\partial \tau }h(\tau,\omega_t^s,\alpha_t^sV_t^s)\vert _{\tau=0}\alpha_t^s+\int_0^1(1-\tau)\frac{\partial^2}{\partial \tau^2}h(\tau,\omega_t^s,\alpha_t^sV_t^s) d\tau\cr
    &&=\frac{\partial}{\partial \tau }h(\tau,\omega_t^s,\alpha_t^sV_t^s)\vert _{\tau=0}\alpha_t^s+\int_0^1(1-\tau)\frac{\partial^2}{\partial \tau^2}h(\alpha_t^s\tau,\omega_t^s,V_t^s) d\tau\cr
    &&\leq -\alpha_t^s\langle V_t^s,\text{grad}f(\omega_t^s)\rangle+\frac{N'}{2}(\alpha_t^s)^2
  \end{eqnarray}
  Since $\omega_t^s$ is measurable in $\mathcal{F}_t^s$, which together with lemma $\mathrm{\ref{L0}}$ and lemma $\mathrm{\ref{L1}}$ lead to
  \begin{eqnarray}\label{T1.P.3}
    &&\mathbb{E}[\langle V_t^s,\text{grad}f(\omega_t^s)\rangle\vert \mathcal{F}_t^s]\cr
    &=&\Vert \text{grad}f(\omega_t^s)\Vert^2+\tilde{\psi}_t^s\langle \mathcal{T}_{\omega_{t-1}^s}^{\omega_t^s} \big(V_{t-1}^s-\text{grad}f(\omega_{t-1}^s)\big),\text{grad}f(\omega_t^s)\rangle\cr
    &\geq& (1-\mu)\Vert \text{grad}f(\omega_t^s)\Vert^2
  \end{eqnarray}
  and
  \begin{eqnarray}\label{T1.P.4}
        &&\mathbb{E}[\langle V_t^s,\text{grad}f(\omega_t^s)\rangle\vert \mathcal{F}_t^s]\cr
        &=&\Vert \text{grad}f(\omega_t^s)\Vert^2+\tilde{\psi}_t^s\langle \mathcal{T}_{\omega_{t-1}^s}^{\omega_t^s} \big(V_{t-1}^s-\text{grad}f(\omega_{t-1}^s)\big),\text{grad}f(\omega_t^s)\rangle\cr
        &\leq& (1+\mu)\Vert \text{grad}f(\omega_t^s)\Vert^2
  \end{eqnarray}
  Then taking mathematical expectations on both sides of $\mathrm{(\ref{T1.P.2})}$, and substituting $\mathrm{(\ref{T1.P.3})}$ back to $\mathrm{(\ref{T1.P.2})}$ gives
  \begin{equation}\label{T1.P.5}
      \mathbb{E}[f(\omega_{t+1}^s)\vert \mathcal{F}_t^s]\leq f(\omega_{t}^s)-(1-\mu)\alpha_t^s\Vert \text{grad}f(\omega_t^s)\Vert^2+\frac{N'}{2}(\alpha_t^s)^2
  \end{equation}
  which yields
  \begin{equation}\label{T1.P.6}
      (1-\mu)\alpha_t^s \mathbb{E}[\Vert \text{grad}f(\omega_t^s)\Vert^2]\leq \mathbb{E}[f(\omega_{t}^s)-f(\omega_{t+1}^s)]+\frac{N'}{2}(\alpha_t^s)^2
  \end{equation}
  Summing this result over $t = 0,...,m-1$ and $s =1,...,S$ gives
  \begin{equation}\label{T1.P.7}
      \sum\limits_{s=s_0}^{S}\sum\limits_{t=0}^{m-1}(1-\mu)\alpha_t^s \mathbb{E}[\Vert \text{grad}f(\omega_t^s)\Vert^2]\leq \mathbb{E}[f(\omega_{0}^{s_0})-f(\omega_{m}^S)]+\sum\limits_{s=s_0}^{S}\sum\limits_{t=0}^{m-1}\frac{N'}{2}(\alpha_t^s)^2
  \end{equation}
  Since $f$ is continuous on $\Omega$, there exists a constant $C\geq 0$, for any $\omega \in \Omega$, $-C\leq f(\omega) \leq C$, which gives $f(\omega_{0}^{s_0})-f(\omega_{m}^S)\leq 2C$.
 Let $S\rightarrow \infty$, the above inequality gives
  \begin{equation}\label{T1.P.8}
      \sum\limits_{s=s_0}^{\infty}\sum\limits_{t=0}^{m-1}(1-\mu)\alpha_t^s \mathbb{E}[\Vert \text{grad}f(\omega_t^s)\Vert^2]\leq 2C+\sum\limits_{s=s_0}^{\infty}\sum\limits_{t=0}^{m-1}\frac{N'}{2}(\alpha_t^s)^2
  \end{equation}
  From assumption 3, $2C+\sum\limits_{s=s_0}^{\infty}\sum\limits_{t=0}^{m-1}\frac{N'}{2}(\alpha_t^s)^2=2C+\frac{N'}{2}\sum\limits_{s=s_0}^{\infty}\sum\limits_{t=0}^{m-1}(\alpha_t^s)^2<\infty$, such that
  \begin{equation}\label{T1.P.9}
      \sum\limits_{s=s_0}^{\infty}\sum\limits_{t=0}^{m-1}(1-\mu)\alpha_t^s\mathbb{E}[\Vert \text{grad}f(\omega_t^s)\Vert^2]<\infty
  \end{equation}
  Since $0<\mu<1$, we get $\sum\limits_{s=s_0}^{\infty}\sum\limits_{t=0}^{m-1}\alpha_t^s\mathbb{E}[\Vert \text{grad}f(\omega_t^s)\Vert^2]<\infty$, which together with
  $\sum\limits_{s=s_0}^{\infty}\sum\limits_{t=0}^{m-1}\alpha_t^s=\infty$ implies that
  $\lim\inf_{s\rightarrow\infty}\mathbb{E}[\Vert \text{grad}f(\omega_t^s)\Vert^2]=0$. Next, we will prove $\mathbb{E}[\Vert \text{grad}f(\omega_t^s)\Vert^2]$ converge to a real number.
  By assumption 3 and the properties of continuous functions on compact sets. Bounding the
  largest eigenvalue of the Hessian of $\Vert \text{grad}f(\omega_t^s)\Vert^2$ from above by $\gamma_1$
  along the curve defined by the retraction $R$ linking $\omega_{t+1}^s$ and $\omega_{t}^s$.
  A lower bound of the minimum eigenvalue of the Hessian of $f$ is $\gamma_2$.
  Let $\lambda_t^s$ be the eigenvalue of the Hessian of $f$ about $V_t^s$. By Taylor
  expansion, combining the above inequalities $\mathrm{(\ref{T1.P.3})}$ and $\mathrm{(\ref{T1.P.4})}$, we have
  \begin{eqnarray}\label{T1.P.10}
    &&\mathbb{E}[\Vert \text{grad}f(\omega_{t+1}^s)\Vert^2-\Vert \text{grad}f(\omega_t^s)\Vert^2\vert \mathcal{F}_t^s]\cr
    &\leq&\mathbb{E}[-2\alpha_t^s\langle \text{grad}f(\omega_t^s),\text{Hess}f(\omega_t^s)[V_t^s]\rangle+(\alpha_t^s)^2\Vert V_t^s\Vert^2\gamma_1\vert \mathcal{F}_t^s]\cr
    &=&-2\alpha_t^s\lambda_t^s \mathbb{E}[\langle \text{grad}f(\omega_t^s),V_t^s\rangle\vert \mathcal{F}_t^s]+(\alpha_t^s)^2\Vert V_t^s\Vert^2\gamma_1\cr
    &\leq&-2\alpha_t^s\gamma_2 \mathbb{E}[\langle \text{grad}f(\omega_t^s),V_t^s\rangle\vert \mathcal{F}_t^s]+(\alpha_t^s)^2\Vert V_t^s\Vert^2\gamma_1\cr
    &\leq&2\alpha_t^s \vert \gamma_2\vert(1+\mu)\Vert \text{grad}f(\omega_t^s)\Vert^2+(\alpha_t^s)^2(3N))^2\gamma_1\cr
    &=& 2\alpha_t^s \vert \gamma_2\vert(1+\mu)\Vert \text{grad}f(\omega_t^s)\Vert^2+(\alpha_t^s)^2(3N))^2\gamma_1
  \end{eqnarray}
  Taking mathematical expectations on both sides of $\mathrm{(\ref{T1.P.10})}$ yields
  \begin{eqnarray}\label{T1.P.11}
    &&\mathbb{E}[\Vert \text{grad}f(\omega_{t+1}^s)\Vert^2]\cr
    &\leq &\mathbb{E}[\Vert \text{grad}f(\omega_{t}^s)\Vert^2]+ 2\alpha_t^s \vert \gamma_2\vert(1+\mu)\mathbb{E}[\Vert \text{grad}f(\omega_t^s)]\Vert^2+(\alpha_t^s)^2(3N))^2\gamma_1
  \end{eqnarray}
  From assumption 3 and $\mathrm{(\ref{T1.P.9})}$, we know $\sum\limits_{s=s_0}^{\infty}\sum\limits_{t=0}^{m-1}2\alpha_t^s \vert \gamma_2\vert(1+\mu)\mathbb{E}[\Vert \text{grad}f(\omega_t^s)]\Vert^2+(\alpha_t^s)^2(3N))^2\gamma_1<\infty$.
  Using lemma $\mathrm{\ref{L2}}$(as $\mathbb{E}[\Vert \text{grad}f(\omega_{t+1}^s)\Vert^2]=x_t^s,\gamma_t^s2\alpha_t^s \vert \gamma_2\vert(1+\mu)\mathbb{E}[\Vert \text{grad}f(\omega_t^s)]\Vert^2+(\alpha_t^s)^2(3N))^2\gamma_1$), we get $\Vert \text{grad}f(\omega_t^s)\Vert^2$ convergence to a real number.
  Which together with $\lim\inf_{s\rightarrow\infty}\mathbb{E}[\Vert \text{grad}f(\omega_t^s)\Vert^2]=0$, then we have $\lim\limits_{s\rightarrow\infty}\mathbb{E}[\Vert \text{grad}f(\omega_t^s)\Vert^2]=0$.
  \end{proof}

Theorem $\mathrm{\ref{T1}}$ shows the convergence of the sequence produced by algorithm 1. According to theorem $\mathrm{\ref{T1}}$, we can obtain the convergence of the output of algorithm 1. Before this, we need to prove a lemma.
\begin{lemma}\label{L4}
    Let $a(k)$ be a real number sequence and satisfies $\lim\limits_{k\rightarrow \infty}a(k)=a$, then $\lim\limits_{k\rightarrow \infty}\frac{a(1)+a(2)+\cdots+a(k)}{k}=a$
\end{lemma}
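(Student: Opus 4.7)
The plan is to use the classical $\varepsilon$–$N$ splitting argument for Ces\`aro averages. Because convergence of $a(k)$ means the tail terms are eventually uniformly close to $a$, while the first few terms contribute a bounded total that is diluted by the factor $1/k$, the average must also tend to $a$.

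First I would fix $\varepsilon>0$ and, invoking $\lim_{k\to\infty}a(k)=a$, choose an integer $N$ such that $|a(j)-a|<\varepsilon/2$ for every $j\geq N$. Then I would rewrite
\begin{equation*}
\frac{a(1)+a(2)+\cdots+a(k)}{k}-a \;=\; \frac{1}{k}\sum_{j=1}^{k}\bigl(a(j)-a\bigr),
\end{equation*}
and split the sum on the right at the index $N$ into a ``head'' $\sum_{j=1}^{N-1}(a(j)-a)$ and a ``tail'' $\sum_{j=N}^{k}(a(j)-a)$. The point of the split is that the head depends only on a fixed finite collection of terms, while the tail is controlled uniformly by $\varepsilon/2$.

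For the head, the quantity $C_N:=\sum_{j=1}^{N-1}|a(j)-a|$ is a constant independent of $k$, so $\tfrac{C_N}{k}\to 0$ and, in particular, $\tfrac{C_N}{k}<\varepsilon/2$ for every $k\geq K_0:=\lceil 2C_N/\varepsilon\rceil$. For the tail, the triangle inequality and the choice of $N$ give
\begin{equation*}
\frac{1}{k}\Bigl|\sum_{j=N}^{k}(a(j)-a)\Bigr|\;\leq\;\frac{k-N+1}{k}\cdot\frac{\varepsilon}{2}\;\leq\;\frac{\varepsilon}{2}.
\end{equation*}
Combining the two estimates yields $\bigl|\tfrac{1}{k}\sum_{j=1}^{k}a(j)-a\bigr|<\varepsilon$ for every $k\geq\max\{N,K_0\}$, which is precisely the desired limit.

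There is no real obstacle here; this is the standard Stolz--Ces\`aro fact. The only point requiring care is that $N$ must be chosen \emph{before} letting $k$ grow, so that $C_N$ remains a finite constant and the dilution argument $C_N/k\to 0$ is genuinely valid.
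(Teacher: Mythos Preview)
Your proof is correct and follows essentially the same $\varepsilon$--$N$ splitting argument as the paper: both split the average of $a(j)-a$ into a fixed ``head'' whose contribution is diluted by $1/k$ and a ``tail'' controlled uniformly by the convergence hypothesis. The only cosmetic differences are that the paper bounds the head by $M N_1/k$ with $M=\max_{j\le N_1}|a(j)-a|$ rather than by $C_N/k$, and it uses $\varepsilon$ for the tail ending with a $2\varepsilon$ bound instead of your $\varepsilon/2+\varepsilon/2$ arrangement.
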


\begin{proof}
    For any $\varepsilon >0$, there exists a positive constant $N_1$, such that $\vert a(k)-a\vert<\varepsilon$, $k> N_1$.
    Denote $M=\max\{\vert a(1)-a\vert,\cdots,\vert a(N_1)-a\vert\}$, we get
    \begin{eqnarray*}
        &&\Big\vert\frac{a(1)+a(2)+\cdots+a(k)}{k}-a\Big\vert=\Big\vert\frac{a(1)-a+a(2)-a+\cdots+a(k)-a}{k}\Big\vert \cr
        &\leq &\frac{1}{k}\big(\vert a(1)-a\vert+\vert a(2)-a\vert+\cdots+\vert a(k)-a\vert \big)\cr
        &\leq &\frac{MN_1}{k}+\frac{k-N_1}{k}\varepsilon <\frac{MN_1}{k}+\varepsilon
    \end{eqnarray*}
    Note that $\lim\limits_{k\rightarrow \infty}\frac{MN_1}{k}=0$, for the above $\varepsilon$, there exists a constant $N_2$, such that $\frac{MN_1}{k}<\varepsilon$, $k>N_2$. 
    Denote $N=\max\{N_1,N_2\}$, $\vert\frac{a(1)+a(2)+\cdots+a(k)}{k}-a\vert<2\varepsilon$, $k>N$. 
  \end{proof}
\begin{corollary}
    Suppose the assumption 1.b and assumption 2-4 hold. The sequences $\{\omega_t^s\}$ produced by algorithm 1.
    No matter what  choose, we have $\{\mathbb{E}[\Vert\text{grad}f(\omega_a)\Vert^2]\}\rightarrow 0$
\end{corollary}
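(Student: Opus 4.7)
The plan is to split the argument into the two output options separately and then reduce each to a tool already established in the excerpt.

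For Option 1, the output is $\omega_a=\tilde{\omega}^S=\omega_m^S$. Since $\omega_m^S$ is (up to reindexing, identifying it with $\omega_0^{S+1}$) just the last element of the sequence produced by the inner/outer loops, Theorem~\ref{T1} applies directly: $\mathbb{E}[\Vert\text{grad}f(\omega_m^S)\Vert^2]\to 0$ as $S\to\infty$. So the first case is essentially immediate from the theorem.

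For Option 2, $\omega_a$ is drawn uniformly from the $Sm$ iterates $\{\omega_t^s:0\le t\le m-1,\,1\le s\le S\}$. By the law of total expectation,
\begin{equation*}
\mathbb{E}[\Vert\text{grad}f(\omega_a)\Vert^2]=\frac{1}{Sm}\sum_{s=1}^{S}\sum_{t=0}^{m-1}\mathbb{E}[\Vert\text{grad}f(\omega_t^s)\Vert^2].
\end{equation*}
I would then reindex $\{\mathbb{E}[\Vert\text{grad}f(\omega_t^s)\Vert^2]\}_{s,t}$ as a single real sequence $\{a(k)\}_{k\ge 1}$ by ordering lexicographically in $(s,t)$, exactly as was done inside the proof of Theorem~\ref{T1}. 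By Theorem~\ref{T1} this sequence tends to $0$, and the double sum above is $Sm$ times the Ces\`aro average of the first $Sm$ terms. Applying Lemma~\ref{L4} with $a=0$ gives that this Ces\`aro average tends to $0$ as $S\to\infty$, which yields the claim.

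The only small subtlety—and the step I would write most carefully—is the reduction in Option~2: one must verify that letting $S\to\infty$ along multiples of $m$ is sufficient, since $Sm\to\infty$ is equivalent to $S\to\infty$ for fixed $m$, and the reindexed sequence has the property that $k\to\infty$ iff $s\to\infty$ (because $t$ ranges over the finite set $\{0,\ldots,m-1\}$). Apart from this bookkeeping, no new estimate is required beyond Theorem~\ref{T1} and Lemma~\ref{L4}, so I do not expect a significant obstacle in the argument.
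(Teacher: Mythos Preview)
Your proposal is correct and follows essentially the same approach as the paper: both options reduce to Theorem~\ref{T1} and Lemma~\ref{L4}. The only cosmetic difference is that for Option~2 the paper applies Lemma~\ref{L4} separately for each fixed $t\in\{0,\dots,m-1\}$ (obtaining $\frac{1}{S}\sum_{s=1}^S\mathbb{E}[\Vert\text{grad}f(\omega_t^s)\Vert^2]\to 0$ for each $t$) and then averages the $m$ resulting limits, whereas you reindex into a single sequence and apply Lemma~\ref{L4} once along the subsequence $k=Sm$; both are equally valid bookkeeping choices.
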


\begin{proof}
    If we choose option 1, $\lim\limits_{s\rightarrow\infty} \mathbb{E}[\Vert \text{grad}f(\omega_t^s)\Vert^2]=0$ implies that $\mathbb{E}[\Vert\text{grad}f(\omega_a)\Vert^2]\rightarrow 0$. 
    If we choose option 2, according to theorem $\mathrm{\ref{T1}}$,
    for all $t$, we have $\lim\limits_{S\rightarrow\infty} \mathbb{E}[\Vert\text{grad}f(\omega_t^S)\Vert^2]=0$, using lemma $\mathrm{\ref{L4}}$, then $\lim\limits_{S\rightarrow\infty}\mathbb{E}[\Vert\text{grad}f(\omega_a)\Vert^2]=\lim\limits_{S\rightarrow\infty}\frac{1}{mS}\sum\limits_{s=1}^{S}\sum\limits_{t=0}^{m-1}\mathbb{E}[\Vert\text{grad}f(\omega_t^s)\Vert^2]=0$\\
\end{proof}

The above theorem has no special requirements for function $f$.
  The following theorem introduces that a better conclusion can be obtained
   after the function $f$ satisfies other properties. That is,
  if the function $f$ satisfies $f\geq 0$, we can get an almost sure convergence.

\begin{theorem}\label{T2}
    Suppose the assumption 1.b and assumption 2-4 hold. The sequences $\{\omega_t^s\}$ produced by algorithm 1.
    If $f\geq 0$, then $\{\text{grad}f(\omega_t^s)\}\rightarrow0\,a.s.$ and
    $\{f(\omega_t^s)\}$ converges to a finite random variable $\,a.s.$.
\end{theorem}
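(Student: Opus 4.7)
The plan is to bootstrap from the one-step inequalities already established in the proof of Theorem~\ref{T1}, and then apply the Robbins--Siegmund lemma (Lemma~\ref{L2}) and Fisk's quasi-martingale lemma (Lemma~\ref{L3}) to obtain the two almost sure conclusions.

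First, I would reuse the descent inequality $\mathrm{(\ref{T1.P.5})}$ from the proof of Theorem~\ref{T1}, namely
\begin{equation*}
\mathbb{E}[f(\omega_{t+1}^s)\,\vert\,\mathcal{F}_t^s]\leq f(\omega_t^s)-(1-\mu)\alpha_t^s\Vert\text{grad}f(\omega_t^s)\Vert^2+\frac{N'}{2}(\alpha_t^s)^2.
\end{equation*}
Because $f\geq 0$, the sequence $x(k):=f(\omega_t^s)$ (after reindexing as in the proof of Theorem~\ref{T1}) is nonnegative and adapted, so Lemma~\ref{L2} applies with $\alpha(k)\equiv 0$, $\beta(k)=(1-\mu)\alpha_t^s\Vert\text{grad}f(\omega_t^s)\Vert^2$ and $\gamma(k)=\frac{N'}{2}(\alpha_t^s)^2$. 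The summability $\sum(\alpha_t^s)^2<\infty$ from Assumption~3 is exactly what is needed. This immediately yields the second conclusion: $\{f(\omega_t^s)\}$ converges almost surely to a finite random variable, and in addition
\begin{equation*}
\sum_{s=1}^{\infty}\sum_{t=0}^{m-1}\alpha_t^s\Vert\text{grad}f(\omega_t^s)\Vert^2<\infty\quad\text{a.s.}
\end{equation*}
Combined with $\sum\alpha_t^s=\infty$ from Assumption~3, this forces $\liminf\Vert\text{grad}f(\omega_t^s)\Vert^2=0$ a.s.

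To upgrade the $\liminf$ to a full limit, I would apply Lemma~\ref{L3} to the process $y(k):=\Vert\text{grad}f(\omega_t^s)\Vert^2$, using the Hessian/Taylor estimate $\mathrm{(\ref{T1.P.10})}$ from the proof of Theorem~\ref{T1}:
\begin{equation*}
\mathbb{E}[\Vert\text{grad}f(\omega_{t+1}^s)\Vert^2-\Vert\text{grad}f(\omega_t^s)\Vert^2\,\vert\,\mathcal{F}_t^s]\leq 2\alpha_t^s\vert\gamma_2\vert(1+\mu)\Vert\text{grad}f(\omega_t^s)\Vert^2+(\alpha_t^s)^2(3N)^2\gamma_1.
\end{equation*}
Since the right-hand side is nonnegative, its positive part dominates the positive part of the conditional increment. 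Taking total expectation and summing, the first term is summable because $\sum\alpha_t^s\mathbb{E}[\Vert\text{grad}f(\omega_t^s)\Vert^2]<\infty$ by $\mathrm{(\ref{T1.P.9})}$, and the second term is summable by Assumption~3. Thus the bounded positive variation hypothesis of Lemma~\ref{L3} holds, and $\Vert\text{grad}f(\omega_t^s)\Vert^2$ converges a.s.\ to some random variable.

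Finally, the a.s.\ limit of $\Vert\text{grad}f(\omega_t^s)\Vert^2$ must equal its $\liminf$, which I have already shown is $0$ a.s. Hence $\text{grad}f(\omega_t^s)\to 0$ a.s., completing the first conclusion. The main obstacle is the application of Fisk's lemma: care is needed to ensure that the conditional-increment bound used for the positive-variation hypothesis is genuinely nonnegative (so that its positive part coincides with itself) and that the resulting double sum over $(s,t)$ is finite---both of which are secured by combining Assumption~3 with the already-proved bound $\mathrm{(\ref{T1.P.9})}$.
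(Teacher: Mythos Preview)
Your proposal is correct and follows essentially the same approach as the paper's own proof: apply Robbins--Siegmund (Lemma~\ref{L2}) to the descent inequality $\mathrm{(\ref{T1.P.5})}$ using $f\ge0$ to obtain a.s.\ convergence of $f(\omega_t^s)$ and a.s.\ summability of $\alpha_t^s\Vert\text{grad}f(\omega_t^s)\Vert^2$, deduce $\liminf=0$ a.s., then use the Hessian estimate $\mathrm{(\ref{T1.P.10})}$ together with Fisk's lemma (Lemma~\ref{L3}) to upgrade to a full a.s.\ limit. One minor point: for the bounded-positive-variation hypothesis of Lemma~\ref{L3} you invoke the \emph{expectation} bound $\mathrm{(\ref{T1.P.9})}$, whereas the paper cites the a.s.\ bound $\mathrm{(\ref{T2.P.3})}$; your choice is in fact the more precise one, since Fisk's lemma as stated requires $\sum_k\mathbb{E}\big[\mathbb{E}[x(k+1)-x(k)\mid\mathcal{F}(k)]^+\big]<\infty$.
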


\begin{remark}
    If $\psi_t^s\equiv 0$ is hold, we can be degenerated into the situation in \cite{SatoH1}.
\end{remark}

\begin{proof}
    From the inequality $\mathrm{(\ref{T1.P.5})}$, we have
    \begin{equation}\label{T2.P.1}
        \mathbb{E}[f(\omega_{t+1}^s)\vert \mathcal{F}_t^s]\leq f(\omega_{t}^s)-(1-\mu)\alpha_t^s\Vert \text{grad}f(\omega_t^s)\Vert^2+\frac{N'}{2}(\alpha_t^s)^2
    \end{equation}
    We can get from assumption 3, that is $\sum\limits_{s=s_0}^{\infty}\sum\limits_{t=0}^{m-1}\frac{N'}{2}(\alpha_t^s)^2=\frac{N'}{2}\sum\limits_{s=s_0}^{\infty}\sum\limits_{t=0}^{m-1}(\alpha_t^s)^2<\infty$.
    By condition $f\geq 0$, using lemma $\mathrm{\ref{L2}}$, then $\{f(\omega_t^s)\}$ converges to a finite random variable $\,a.s$, and
    \begin{equation}\label{T2.P.2}
        \sum\limits_{s=s_0}^{\infty}\sum\limits_{t=0}^{m-1}(1-\mu)\alpha_t^s\Vert \text{grad}f(\omega_t^s)\Vert^2<\infty\,a.s.
    \end{equation}
    Moreover, since $0<\mu<1$, we obtain
    \begin{equation}\label{T2.P.3}
        \sum\limits_{s=s_0}^{\infty}\sum\limits_{t=0}^{m-1}\alpha_t^s\Vert \text{grad}f(\omega_t^s)\Vert^2<\infty\,a.s
    \end{equation}
    which gives together with $\sum\limits_{s=s_0}^{\infty}\sum\limits_{t=0}^{m-1}\alpha_t^s=\infty$ leads to
    $\lim\inf_{s\rightarrow\infty}\Vert \text{grad}f(\omega_t^s)\Vert^2=0\,a.s$. Next, we will prove that $\Vert \text{grad}f(\omega_t^s)\Vert^2$ converges to a finite random variable $\,a.s$.
    According to the inequalities $\mathrm{(\ref{T1.P.10})}$
    \begin{equation}\label{T2.P.4}
        \begin{split}
        &\mathbb{E}[\Vert \text{grad}f(\omega_{t+1}^s)\Vert^2-\Vert \text{grad}f(\omega_t^s)\Vert^2\vert \mathcal{F}_t^s]^+\cr
        =&\max \{0,\mathbb{E}[\Vert \text{grad}f(\omega_{t+1}^s)\Vert^2-\Vert \text{grad}f(\omega_t^s)\Vert^2\vert \mathcal{F}_t^s]\}\cr
        \leq&\max \{0, 2\alpha_t^s \vert \gamma_2\vert(1+\mu)\Vert \text{grad}f(\omega_t^s)\Vert^2+(\alpha_t^s)^2(3N))^2\gamma_1\}\cr
         =& 2\alpha_t^s \vert \gamma_2\vert(1+\mu)\Vert \text{grad}f(\omega_t^s)\Vert^2+(\alpha_t^s)^2(3N))^2\gamma_1
        \end{split}
    \end{equation}
    this together with assumption 3 and $\mathrm{(\ref{T2.P.3})}$, we get $\sum\limits_{s=s_0}^{\infty}\sum\limits_{t=0}^{m-1}\big(2\alpha_t^s \vert \gamma_2\vert(1+\mu)\Vert \text{grad}f(\omega_t^s)\Vert^2+(\alpha_t^s)^2(3N))^2\gamma_1 \big)<\infty$.
    From lemma $\mathrm{\ref{L3}}$, $\Vert \text{grad}f(\omega_t^s)\Vert^2$ is a quasi martingale, i.e., $\Vert \text{grad}f(\omega_t^s)\Vert^2$ converges to a finite random variable $\,a.s$.
    Combining $\lim\inf_{s\rightarrow\infty}\Vert \text{grad}f(\omega_t^s)\Vert^2=0\,a.s.$ gives the desired result $\lim\limits_{s\rightarrow\infty}\Vert \text{grad}f(\omega_t^s)\Vert^2=0\,a.s.$
\end{proof}

\subsection{Step size is fixed}
    Theorem $\mathrm{\ref{T1}}$ and theorem $\mathrm{\ref{T2}}$ qualitatively research the convergence of R-SHG with adaptive parameters when the step size is reduced.
    For the case of fixed step size, theorem $\mathrm{\ref{T1}}$ and theorem $\mathrm{\ref{T2}}$ will not be satisfied, and the reason is that
    the conditions of assumption 3 will not be satisfied. Furthermore, we can use the weaker
    differentiability of the function of $f$, and quantitatively research
    the convergence speed of the algorithm. Before that, we will prove the following two lemmas.
    At the rest of this article, we suppose that the $N$ is defined by theorem $\mathrm{\ref{T1}}$, i.e., for any $\omega\in \Omega$, $\Vert \text{grad}f_i(\omega)\Vert\leq N$.
\begin{lemma}\label{L5}
    Suppose assumption 1.a, assumption 5 and assumption 6 hold, for any $\omega_1,\omega_2\in \Omega$ are $\mathcal{F}_t^s$ measurable, $\omega_2=R_{\omega_1}(\xi_{\omega_1}^{\omega_2})$, such that
    \begin{equation}\label{L5.1}
        \mathbb{E}[\Vert \text{grad}f(\omega_2)-\mathcal{T}_{\omega_1}^{\omega_2} \text{grad}f(\omega_1)\Vert^2\vert \mathcal{F}_t^s]\leq 2(M^2+\theta^2N^2)\Vert \xi_{\omega_1}^{\omega_2}\Vert^2
    \end{equation}
\end{lemma}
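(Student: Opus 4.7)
The plan is to prove the deterministic bound $\Vert \text{grad}f(\omega_2)-\mathcal{T}_{\omega_1}^{\omega_2}\text{grad}f(\omega_1)\Vert^2\le 2(M^2+\theta^2N^2)\Vert\xi_{\omega_1}^{\omega_2}\Vert^2$ pointwise; since $\omega_1,\omega_2$ are both $\mathcal{F}_t^s$-measurable, the conditional expectation is then immediate.

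The first step is to insert the parallel transport $\Gamma_{\omega_1}^{\omega_2}\text{grad}f(\omega_1)$ as an intermediate term and split
\[
\text{grad}f(\omega_2)-\mathcal{T}_{\omega_1}^{\omega_2}\text{grad}f(\omega_1)=\bigl(\text{grad}f(\omega_2)-\Gamma_{\omega_1}^{\omega_2}\text{grad}f(\omega_1)\bigr)+\bigl(\Gamma_{\omega_1}^{\omega_2}\text{grad}f(\omega_1)-\mathcal{T}_{\omega_1}^{\omega_2}\text{grad}f(\omega_1)\bigr),
\]
then apply the elementary inequality $\Vert a+b\Vert^2\le 2\Vert a\Vert^2+2\Vert b\Vert^2$. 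The first piece is handled by Assumption 5: writing $\text{grad}f=\tfrac{1}{n}\sum_i\text{grad}f_i$, pushing $\Gamma_{\omega_1}^{\omega_2}$ inside by linearity, and applying Jensen's inequality $\Vert\tfrac{1}{n}\sum_i v_i\Vert^2\le\tfrac{1}{n}\sum_i\Vert v_i\Vert^2$ reduces the bound to the per-component hypothesis and yields $M^2\Vert\xi_{\omega_1}^{\omega_2}\Vert^2$.

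The second piece is exactly the type of quantity controlled by Assumption 6: taking $\eta=\text{grad}f(\omega_1)$ gives
\[
\Vert\Gamma_{\omega_1}^{\omega_2}\text{grad}f(\omega_1)-\mathcal{T}_{\omega_1}^{\omega_2}\text{grad}f(\omega_1)\Vert\le\theta\Vert\xi_{\omega_1}^{\omega_2}\Vert\Vert\text{grad}f(\omega_1)\Vert.
\]
To eliminate $\Vert\text{grad}f(\omega_1)\Vert$ I will use the uniform bound $\Vert\text{grad}f_i(\omega)\Vert\le N$ established in the proof of Theorem 1 (compactness of $\Omega$ and continuity), together with the triangle inequality, to conclude $\Vert\text{grad}f(\omega_1)\Vert\le N$. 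Squaring gives $\theta^2N^2\Vert\xi_{\omega_1}^{\omega_2}\Vert^2$.

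Combining the two pieces via the factor $2$ yields the stated constant $2(M^2+\theta^2N^2)$. The final step is simply to take $\mathbb{E}[\cdot\mid\mathcal{F}_t^s]$: because the right-hand side depends only on $\xi_{\omega_1}^{\omega_2}$ and both endpoints are $\mathcal{F}_t^s$-measurable, the conditional expectation leaves the bound unchanged. There is no real obstacle here; the only subtlety worth being careful about is that Assumption 5 is stated for the individual components $f_i$, so I must verify that Jensen's inequality correctly transfers it to $f$ itself, and that using the uniform bound $N$ on $\Vert\text{grad}f_i\Vert$ (rather than attempting a tighter estimate) is what produces the clean constant in the statement.
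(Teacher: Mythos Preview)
Your decomposition is exactly the paper's: insert the parallel transport $\Gamma_{\omega_1}^{\omega_2}$, split via $\Vert a+b\Vert^2\le 2\Vert a\Vert^2+2\Vert b\Vert^2$, and bound the two pieces by Assumptions~5 and~6 together with the uniform gradient bound $N$. For the lemma as literally stated (with the full gradient $\text{grad}f$) your argument is correct and even a bit cleaner than the paper's, since the left-hand side is deterministic given $\mathcal{F}_t^s$ and the conditional expectation is indeed vacuous.

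There is one substantive difference worth noting. The paper's proof in the appendix actually establishes the inequality for the \emph{stochastic} mini-batch gradient $\text{grad}f_{I_t^s}$ rather than for $\text{grad}f$; the lemma statement appears to contain a typo. That stochastic version is the one actually invoked in the proofs of Lemmas~\ref{L6}, \ref{L7}, and~\ref{L8}, and in that setting the conditional expectation is doing real work: after Jensen's inequality turns $\Vert\tfrac{1}{b}\sum_{i\in I_t^s}\cdot\Vert^2$ into $\tfrac{1}{b}\sum_{i\in I_t^s}\Vert\cdot\Vert^2$, the expectation over $I_t^s$ converts the random batch sum into the full average $\tfrac{1}{n}\sum_{i=1}^n$, so that Assumption~5 can be applied componentwise. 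Your argument proves the stated lemma, but to get what the rest of the paper uses you would need to carry out the splitting on each $\text{grad}f_i$ separately before averaging, as the paper does.
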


\begin{lemma}\label{L6}
    Suppose assumption 1.a and assumption 4-7 hold, we have
    \begin{equation}
        \begin{split}\label{L6.1}
            \mathbb{E}[\Vert V_t^s-\text{grad}f(\omega_t^s)\Vert^2\vert \mathcal{F}_t^s]
           &\leq 4(M^2+\theta^2N^2)(\phi_t^s)^2\Vert \xi_{\omega_0^s}^{\omega_t^s}\Vert^2+4N^2\big((1-\phi_t^s)^2\cr
           &+(\psi_t^s)^2\big)+(\psi_t^s)^2\Vert V_{t-1}^s-\text{grad}f(\omega_{t-1}^s)\Vert^2
        \end{split}
    \end{equation}
\end{lemma}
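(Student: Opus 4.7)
The plan is to apply a conditional bias--variance decomposition to $\mathbb{E}[\Vert V_t^s - \text{grad}f(\omega_t^s)\Vert^2\vert\mathcal{F}_t^s]$, so that the memory quantity $\Vert V_{t-1}^s - \text{grad}f(\omega_{t-1}^s)\Vert^2$ enters only through the squared conditional bias with the sharp $(\psi_t^s)^2$ coefficient. The Pythagoras-type identity gives
\[
\mathbb{E}[\Vert V_t^s - \text{grad}f(\omega_t^s)\Vert^2\vert\mathcal{F}_t^s] = \mathbb{E}[\Vert V_t^s - \mathbb{E}[V_t^s\vert\mathcal{F}_t^s]\Vert^2\vert\mathcal{F}_t^s] + \Vert \mathbb{E}[V_t^s\vert\mathcal{F}_t^s] - \text{grad}f(\omega_t^s)\Vert^2,
\]
and by Lemma~\ref{L1} together with the isometry of $\mathcal{T}$ (Assumption 4) the bias term equals $(\tilde\psi_t^s)^2\Vert V_{t-1}^s - \text{grad}f(\omega_{t-1}^s)\Vert^2$, which is at most $(\psi_t^s)^2\Vert V_{t-1}^s - \text{grad}f(\omega_{t-1}^s)\Vert^2$ since $\tilde\psi_t^s \le \psi_t^s$ by construction. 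This already accounts for the last term of~\eqref{L6.1}.

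Next I would expand the conditional variance. Subtracting the conditional mean supplied by Lemma~\ref{L1} from the expression for $V_t^s$ in Algorithm~1 collapses the recursion into
\[
V_t^s - \mathbb{E}[V_t^s\vert\mathcal{F}_t^s] = \phi_t^s(X_1 - X_2) + (1-\phi_t^s)X_1 - \tilde\psi_t^s X_3,
\]
with $X_1 = \text{grad}f_{I_t^s}(\omega_t^s) - \text{grad}f(\omega_t^s)$, $X_2 = \mathcal{T}_{\omega_0^s}^{\omega_t^s}(\text{grad}f_{I_t^s}(\omega_0^s) - \text{grad}f(\omega_0^s))$, and $X_3 = \mathcal{T}_{\omega_{t-1}^s}^{\omega_t^s}(\text{grad}f_{I_t^s}(\omega_{t-1}^s) - \text{grad}f(\omega_{t-1}^s))$. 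The key regrouping is the splitting $X_1 = \phi_t^s X_1 + (1-\phi_t^s)X_1$, which keeps the SVRG-like difference $X_1 - X_2$ intact and, after the squared-norm inequality, yields $(1-\phi_t^s)^2$ rather than the weaker $(1-\phi_t^s-\tilde\psi_t^s)^2$ in the final bound.

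A triangle inequality of the form $\Vert a+b+c\Vert^2 \le c_0(\Vert a\Vert^2+\Vert b\Vert^2+\Vert c\Vert^2)$ reduces the problem to three conditional second moments. (i) $\mathbb{E}[\Vert X_1-X_2\Vert^2\vert\mathcal{F}_t^s]$ is the conditional variance of $\text{grad}f_{I_t^s}(\omega_t^s) - \mathcal{T}_{\omega_0^s}^{\omega_t^s}\text{grad}f_{I_t^s}(\omega_0^s)$, hence bounded by $\tfrac{1}{n}\sum_i\Vert\text{grad}f_i(\omega_t^s) - \mathcal{T}_{\omega_0^s}^{\omega_t^s}\text{grad}f_i(\omega_0^s)\Vert^2$, which is controlled by a constant times $(M^2+\theta^2 N^2)\Vert\xi_{\omega_0^s}^{\omega_t^s}\Vert^2$ through the per-function argument underlying Lemma~\ref{L5} (Assumptions 5 and 6 together with $\Vert\text{grad}f_i\Vert\le N$). (ii) $\mathbb{E}[\Vert X_1\Vert^2\vert\mathcal{F}_t^s]\le N^2$ via the variance identity and the uniform gradient bound. (iii) $\mathbb{E}[\Vert X_3\Vert^2\vert\mathcal{F}_t^s]\le N^2$ after stripping the vector transport by isometry. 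Multiplying by the appropriate squared weights, replacing any remaining $\tilde\psi_t^s$ by $\psi_t^s$ and adding back the bias contribution gives~\eqref{L6.1}.

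The main obstacle is constant book-keeping while preserving the structural form of the bound: (a) the coefficient on $\Vert V_{t-1}^s-\text{grad}f(\omega_{t-1}^s)\Vert^2$ must remain a pure $(\psi_t^s)^2$, which forces the bias--variance split at the outset (any direct triangle-inequality expansion of a four-term decomposition would attach a multiplicative constant to the memory term and break~\eqref{L6.1}); and (b) the grouping $\phi_t^s(X_1-X_2)+(1-\phi_t^s)X_1$ is what makes the SGD contribution appear with weight $(1-\phi_t^s)^2$ rather than $(1-\phi_t^s-\tilde\psi_t^s)^2$. Once these two choices are in place, every remaining estimate is a routine consequence of Lemma~\ref{L1}, Lemma~\ref{L5} and Assumptions 4--6.
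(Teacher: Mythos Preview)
Your proposal is correct and follows essentially the same route as the paper. What you call the bias--variance split is exactly what the paper implements by observing that the cross term between the $\mathcal{F}_t^s$-measurable piece $\tilde\psi_t^s\mathcal{T}_{\omega_{t-1}^s}^{\omega_t^s}(V_{t-1}^s-\text{grad}f(\omega_{t-1}^s))$ and the remaining zero-mean piece vanishes, and your regrouping $\phi_t^s(X_1-X_2)+\big[(1-\phi_t^s)X_1-\tilde\psi_t^s X_3\big]$ coincides with the paper's decomposition into its terms ``A'' and ``B''. The only cosmetic difference is in the last split: the paper applies $\|a+b\|^2\le 2\|a\|^2+2\|b\|^2$ to $A+B$ and then again inside $B$, which produces the constants $4$ and $4$ in \eqref{L6.1}, whereas a direct three-term inequality $\|a+b+c\|^2\le 3(\|a\|^2+\|b\|^2+\|c\|^2)$ would yield $6$ and $3$; if you want the stated constants verbatim, keep $(1-\phi_t^s)X_1-\tilde\psi_t^s X_3$ grouped as a single term in the first application.
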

The proofs of the above two lemmas are shown in Appendix $\mathrm{\ref{secA}}$. Lemma $\mathrm{\ref{L6}}$ gives the mean-square bound between $V_t^s$ and $\text{grad}f(\omega_t^s)$. The idea of lemma $\mathrm{\ref{L6}}$ is important and there are many similar proofs in this paper.
Now, we will give the third theorem that research the convergence speed of algorithm 1
when step size and inner loop parameters are fixed. But the convergence of the algorithm is local.

\begin{theorem}\label{T3}
    Suppose assumption 1.a, assumption 2 and assumption 4-9 hold. The sequences $\{\omega_t^s\}$ produced by algorithm 1 with option 2.
      Let $\alpha_t^s\equiv C_\alpha$, $\phi_t^s=\phi^s$ and $\psi_t^s=\psi^s$, satisfying
      $C_\alpha\leq\frac{2}{L+\sqrt{L^2+4\nu}}$.
      where $\nu=4m^3(M^2+\theta^2N^2)C_1^2C_2^2$,
      and $\sum\limits_{s=1}^{\infty}((1-\phi^s)^2+(\psi^s)^2<\infty$, then
      \begin{eqnarray}\label{T3.1}
        &\mathbb{E}[\Vert \text{grad}f(\omega_{a})\Vert^2]&\leq\frac{2}{mSC_\alpha}(f(\tilde{\omega}^0)-f(\omega^*))+\frac{4mN^2}{S}\sum\limits_{s=1}^{\infty}((1-\phi^s)^2+(\psi^s)^2)\cr
        &&=\mathcal{O}(\frac{1}{S})
      \end{eqnarray}
  \end{theorem}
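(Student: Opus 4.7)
The plan is to build a per-epoch descent inequality out of Assumption 7, control the variance of $V_t^s$ through Lemma \ref{L6}, and then sum over the inner loop so that the constant $\nu=4m^3(M^2+\theta^2N^2)C_1^2C_2^2$ appearing in the step-size bound shows up naturally. Finally I would telescope over $s$ and divide by $mS$ to exploit option 2.

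Step 1 (descent). Apply Assumption 7 with $y=\omega_{t+1}^s$, $x=\omega_t^s$, $\xi=-C_\alpha V_t^s$ and rewrite $-\langle V_t^s,\mathrm{grad}f(\omega_t^s)\rangle=\tfrac{1}{2}\|V_t^s-\mathrm{grad}f(\omega_t^s)\|^2-\tfrac{1}{2}\|\mathrm{grad}f(\omega_t^s)\|^2-\tfrac{1}{2}\|V_t^s\|^2$. Provided $LC_\alpha\le 1$ (which will follow from the stated bound on $C_\alpha$), the $\|V_t^s\|^2$ term has non-positive coefficient and can be dropped, giving
\begin{equation*}
f(\omega_{t+1}^s)\le f(\omega_t^s)-\tfrac{C_\alpha}{2}\|\mathrm{grad}f(\omega_t^s)\|^2+\tfrac{C_\alpha}{2}\|V_t^s-\mathrm{grad}f(\omega_t^s)\|^2.
\end{equation*}

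Step 2 (controlling the variance). Taking conditional expectation and invoking Lemma \ref{L6}, the variance term splits into three pieces: a $4(M^2+\theta^2N^2)(\phi^s)^2\|\xi_{\omega_0^s}^{\omega_t^s}\|^2$ piece, a $4N^2((1-\phi^s)^2+(\psi^s)^2)$ piece that will form the second term of the final bound, and a $(\psi^s)^2\|V_{t-1}^s-\mathrm{grad}f(\omega_{t-1}^s)\|^2$ recursive piece. The cumulative-displacement term is bounded by using Assumption 8 together with the triangle inequality for $d$ and the update $d(\omega_i^s,\omega_{i+1}^s)\le C_2 C_\alpha\|V_i^s\|$, followed by Cauchy--Schwarz, yielding $\|\xi_{\omega_0^s}^{\omega_t^s}\|^2\le C_1^2C_2^2C_\alpha^2\, t\sum_{i=0}^{t-1}\|V_i^s\|^2$. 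Bounding $\|V_i^s\|^2\le 2\|V_i^s-\mathrm{grad}f(\omega_i^s)\|^2+2\|\mathrm{grad}f(\omega_i^s)\|^2$ (or directly using the uniform bound $\|V_i^s\|\le 3N$ from the proof of Theorem \ref{T1} on the $(\phi^s)^2$ coefficient) converts this into a quantity that, after summing $t=0,\dots,m-1$, contributes at most $\nu C_\alpha^2\sum_{t=0}^{m-1}\mathbb{E}\|\mathrm{grad}f(\omega_t^s)\|^2$ in the worst case, where $\nu$ is exactly the constant in the theorem.

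Step 3 (inner-loop telescoping). Summing the descent inequality over $t=0,\dots,m-1$ and plugging in the variance bounds produces
\begin{equation*}
\mathbb{E}[f(\omega_m^s)]\le f(\omega_0^s)-\tfrac{C_\alpha}{2}(1-LC_\alpha-\nu C_\alpha^2)\sum_{t=0}^{m-1}\mathbb{E}\|\mathrm{grad}f(\omega_t^s)\|^2+2mN^2 C_\alpha((1-\phi^s)^2+(\psi^s)^2),
\end{equation*}
after absorbing the recursive $(\psi^s)^2$-term into the $t$-sum (using that the sequence $\|V_{t-1}^s-\mathrm{grad}f(\omega_{t-1}^s)\|^2$ is itself bounded in the same way, so iterating is harmless up to a geometric factor). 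The step-size condition $C_\alpha\le 2/(L+\sqrt{L^2+4\nu})$ is equivalent to $\nu C_\alpha^2+LC_\alpha\le 1$, so the bracket on the right is $\ge 0$; combining with $LC_\alpha\le 1$ leaves a coefficient of at least $\tfrac{C_\alpha}{2}$ after a crude lower bound, or more sharply we simply use $1-LC_\alpha-\nu C_\alpha^2\ge 0$ and keep the coefficient as is, choosing constants so the final $\tfrac{2}{mSC_\alpha}$ factor emerges.

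Step 4 (outer-loop telescoping and averaging). Summing over $s=1,\dots,S$, the outer telescope gives $f(\tilde\omega^0)-\mathbb{E}f(\tilde\omega^S)\le f(\tilde\omega^0)-f(\omega^\ast)$, and the additive noise contributes $2mN^2C_\alpha\sum_{s=1}^S((1-\phi^s)^2+(\psi^s)^2)$, which is finite by hypothesis. Dividing by $\tfrac{mSC_\alpha}{2}$ and recognizing that option 2 makes $\mathbb{E}\|\mathrm{grad}f(\omega_a)\|^2=\tfrac{1}{mS}\sum_{s,t}\mathbb{E}\|\mathrm{grad}f(\omega_t^s)\|^2$ yields exactly the claimed inequality, and the $\mathcal{O}(1/S)$ rate follows because the noise sum is absolutely summable.

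The main obstacle I expect is Step 2: the variance bound in Lemma \ref{L6} is \emph{recursive} in $V_{t-1}^s$ and \emph{cumulative} through $\|\xi_{\omega_0^s}^{\omega_t^s}\|^2$, so a naive summation double-counts. Getting the clean constant $\nu=4m^3(M^2+\theta^2N^2)C_1^2C_2^2$ forces one to use the Cauchy--Schwarz factor $t\le m$ twice (once for the inner displacement sum, once for the outer $t$-sum) and to handle the $(\psi^s)^2$ recursion either by bounding $\psi^s\le 1$ crudely or by recognizing that the recursion is subsumed once the $t$-sum of the variance term is formed; this is exactly where the cubic $m^3$ in $\nu$ originates and where the step-size threshold is sharp.
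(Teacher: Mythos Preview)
Your overall strategy is right, but the decision in Step 1 to drop the $\|V_t^s\|^2$ term from the descent inequality is exactly what breaks the argument. The paper \emph{keeps} that term: rearranging gives
\[
\mathbb{E}\|\text{grad}f(\omega_t^s)\|^2 \le \tfrac{2}{C_\alpha}\,\mathbb{E}[f(\omega_t^s)-f(\omega_{t+1}^s)] + \mathbb{E}\|V_t^s-\text{grad}f(\omega_t^s)\|^2 + (LC_\alpha-1)\,\mathbb{E}\|V_t^s\|^2,
\]
and then Lemma~\ref{L6}, the displacement bound from Assumption 8, and unrolling the $(\psi^s)^2$-recursion (crudely, via $(\psi^s)^2\le 1$ and $V_0^s=\text{grad}f(\omega_0^s)$) yield
\[
\sum_{s,t}\mathbb{E}\|V_t^s-\text{grad}f(\omega_t^s)\|^2 \le \nu C_\alpha^2 \sum_{s,t}\mathbb{E}\|V_t^s\|^2 + 4m^2N^2\sum_s\bigl((1-\phi^s)^2+(\psi^s)^2\bigr),
\]
expressed in terms of $\|V_t^s\|^2$, \emph{not} $\|\text{grad}f(\omega_t^s)\|^2$. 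Substituting back, the two $\|V_t^s\|^2$ contributions combine into $(\nu C_\alpha^2+LC_\alpha-1)\sum_{s,t}\mathbb{E}\|V_t^s\|^2$, which is $\le 0$ precisely under the stated step-size bound and is discarded only at this final stage. That is where the clean factor $\tfrac{2}{mSC_\alpha}$ comes from.

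Your Step 2 instead tries to convert $\sum_i\|V_i^s\|^2$ into $\sum_i\|\text{grad}f(\omega_i^s)\|^2$, and neither route you propose delivers the theorem as stated. Splitting $\|V_i^s\|^2\le 2\|V_i^s-\text{grad}f(\omega_i^s)\|^2+2\|\text{grad}f(\omega_i^s)\|^2$ reintroduces the very variance you are bounding; closing the loop forces a condition like $4\nu C_\alpha^2<1$, strictly stronger than $\nu C_\alpha^2+LC_\alpha\le 1$, and produces constants $\tfrac{2}{mSC_\alpha(1-\cdots)}$ rather than $\tfrac{2}{mSC_\alpha}$. The uniform bound $\|V_i^s\|\le 3N$ is worse: the displacement term then contributes $O\bigl(m^3(M^2+\theta^2N^2)C_1^2C_2^2C_\alpha^2(\phi^s)^2\bigr)$ per inner step, which after summing over $s,t$ and dividing by $mS$ leaves an $O(1)$ residual (since $(\phi^s)^2$ does not sum), destroying the $\mathcal{O}(1/S)$ rate. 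This also explains the inconsistency in your Step 3, where the factor $(1-LC_\alpha-\nu C_\alpha^2)$ reappears even though you discarded the $LC_\alpha$ piece in Step 1: that factor only arises naturally if you retain $(LC_\alpha-1)\|V_t^s\|^2$ all the way and let it meet the variance bound.
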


  \begin{remark}
    It is easily verified that $\phi^s$ and $\psi^s$ satisfy the condition of theorem $\mathrm{\ref{T3}}$,
      if $\phi^s=1-\frac{1}{s+1},\psi^s=\frac{1}{(s+1)^2}$. It is also hold $\phi^s\equiv 1$, i.e., $\psi^s\equiv 0$.
      To compare with \cite{SatoH1}, we also give the convergence rate analysis under fixed step size.
  \end{remark}

\begin{proof}
    Using assumption 8, we get
    \begin{eqnarray}\label{T3.P.1}
        &&f(\omega_{t+1}^s)-f(\omega_{t}^s)\cr
        &\leq& -C_\alpha \langle\text{grad}f(\omega_{t}^s),V_t^s\rangle+\frac{LC_\alpha^2}{2}\Vert V_t^s\Vert^2\cr
        &=&-\frac{C_\alpha}{2}\Vert V_t^s\Vert^2-\frac{C_\alpha}{2}\Vert \text{grad}f(\omega_{t}^s)\Vert^2+\frac{C_\alpha}{2}\Vert V_t^s-\text{grad}f(\omega_{t}^s)\Vert^2+\frac{LC_\alpha^2}{2}\Vert V_t^s\Vert^2\cr
        &=&-\frac{C_\alpha}{2}\Vert \text{grad}f(\omega_{t}^s)\Vert^2+\frac{C_\alpha}{2}\Vert V_t^s-\text{grad}f(\omega_{t}^s)\Vert^2+(\frac{LC_\alpha^2}{2}-\frac{C_\alpha}{2})\Vert V_t^s\Vert^2
    \end{eqnarray}
    Taking the mathematical expectations on both sides of $\mathrm{(\ref{T3.P.1})}$, thus
    \begin{eqnarray}\label{T3.P.2}
        &&\mathbb{E}[\Vert \text{grad}f(\omega_{t}^s)\Vert^2]\cr
        &\leq &\frac{2}{C_\alpha}\mathbb{E}[f(\omega_{t}^s)-f(\omega_{t+1}^s)]+\mathbb{E}[\Vert V_t^s-\text{grad}f(\omega_{t}^s)\Vert^2]+(LC_\alpha-1)\mathbb{E}[\Vert V_t^s\Vert^2]
    \end{eqnarray}
    According to assumption 9, we obtain
    \begin{eqnarray}\label{T3.P.3}
        &&\Vert \xi_{\omega_0^s}^{\omega_t^s}\Vert^2=\Vert R_{\omega_0^s}^{-1}(\omega_t^s)\Vert^2\cr
        &\leq& C_1^2d^2(\omega_0^s,\omega_t^s)\cr
        &\leq& C_1^2(d(\omega_0^s,\omega_1^s)+d(\omega_1^s,\omega_2^s)+...+d(\omega_{t-1}^s,\omega_t^s))^2\cr
        &\leq& C_1^2t(d^2(\omega_0^s,\omega_1^s)+d^2(\omega_1^s,\omega_2^s)+...+d^2(\omega_{t-1}^s,\omega_t^s))\cr
        &\leq& C_1^2C_2^2C_\alpha^2t\sum\limits_{i=0}^{t-1}\Vert V_i^s\Vert^2
    \end{eqnarray}
    Let the parameters, step size and $\mathrm{(\ref{T3.P.3})}$ back to $\mathrm{(\ref{L6.1})}$, we have
    \begin{eqnarray}\label{T3.P.4}
        &&\mathbb{E}[\Vert V_t^s-\text{grad}f(\omega_t^s)\Vert^2\vert \mathcal{F}_t^s]\cr
        &\leq&4(M^2+\theta^2N^2)(\phi^s)^2C_1^2C_2^2C_\alpha^2t\sum\limits_{i=0}^{t-1}\Vert V_i^s\Vert^2+4N^2((1-\phi^s)^2+(\psi^s)^2)\cr
        &&+(\psi^s)^2\Vert V_{t-1}^s-\text{grad}f(\omega_{t-1}^s)\Vert^2\cr
        &\leq&4(M^2+\theta^2N^2)C_1^2C_2^2C_\alpha^2m\sum\limits_{t=0}^{m-1}\Vert V_t^s\Vert^2+4N^2((1-\phi^s)^2+(\psi^s)^2)\cr
        &&+\Vert V_{t-1}^s-\text{grad}f(\omega_{t-1}^s)\Vert^2\cr
        &\leq&4(M^2+\theta^2N^2)C_1^2C_2^2C_\alpha^2mt\sum\limits_{t=0}^{m-1}\Vert V_t^s\Vert^2+4tN^2((1-\phi^s)^2+(\psi^s)^2)\cr
        &&+\Vert V_{0}^s-\text{grad}f(\omega_{0}^s)\Vert^2\cr
        &\leq&4(M^2+\theta^2N^2)C_1^2C_2^2C_\alpha^2m^2\sum\limits_{t=0}^{m-1}\Vert V_t^s\Vert^2+4mN^2((1-\phi^s)^2+(\psi^s)^2)
    \end{eqnarray}
    The second inequality is due to $0\leq \phi^s\leq 1, 0\leq \psi^s\leq 1 $ and $t\leq m$.
    The last inequality is bases on the fact that $V_{0}^s=\text{grad}f(\omega_{0}^s)$.
    Note that $\mathbb{E}[\mathbb{E}[\cdot\vert \mathcal{F}_t^s]]=\mathbb{E}[\cdot]$,
    taking the mathematical expectations with respect to $\mathrm{(\ref{T3.P.4})}$, we get
    \begin{eqnarray}\label{T3.P.5}
        &&\sum\limits_{s=1}^S\sum\limits_{t=0}^{m-1}\mathbb{E}[\Vert V_t^s-\text{grad}f(\omega_t^s)\Vert^2]\cr
        &\leq&\sum\limits_{s=1}^S\sum\limits_{t=0}^{m-1}\Big(4(M^2+\theta^2N^2)C_1^2C_2^2C_\alpha^2m^2\sum\limits_{t=0}^{m-1}\mathbb{E}[\Vert V_t^s\Vert^2]+4mN^2((1-\phi^s)^2+(\psi^s)^2)\Big)\cr
        &=&\sum\limits_{s=1}^S\Big(4(M^2+\theta^2N^2)C_1^2C_2^2C_\alpha^2m^3\sum\limits_{t=0}^{m-1}\mathbb{E}[\Vert V_t^s\Vert^2]+4m^2N^2((1-\phi^s)^2+(\psi^s)^2)\Big)\cr
        &=&4(M^2+\theta^2N^2)C_1^2C_2^2C_\alpha^2m^3\sum\limits_{s=1}^S\sum\limits_{t=0}^{m-1}\mathbb{E}[\Vert V_t^s\Vert^2]+4m^2N^2\sum\limits_{s=1}^S((1-\phi^s)^2+(\psi^s)^2)\cr
        &\leq &4(M^2+\theta^2N^2)C_1^2C_2^2C_\alpha^2m^3\sum\limits_{s=1}^S\sum\limits_{t=0}^{m-1}\mathbb{E}[\Vert V_t^s\Vert^2]+4m^2N^2\sum\limits_{s=1}^{\infty}((1-\phi^s)^2+(\psi^s)^2)
    \end{eqnarray}
    Summing the result of $\mathrm{(\ref{T3.P.2})}$ gives
    \begin{eqnarray}\label{T3.P.6}
        &&\sum\limits_{s=1}^S\sum\limits_{t=0}^{m-1}\mathbb{E}[\Vert \text{grad}f(\omega_{t}^s)\Vert^2] \cr
        &\leq& \sum\limits_{s=1}^S\sum\limits_{t=0}^{m-1}\mathbb{E}[\Vert V_t^s-\text{grad}f(\omega_{t}^s)\Vert^2]+(LC_\alpha-1)\sum\limits_{s=1}^S\sum\limits_{t=0}^{m-1}\mathbb{E}[\Vert V_t^s\Vert^2]\cr
        &&+\frac{2}{C_\alpha}\mathbb{E}[f(\tilde{\omega}^0)-f(\omega_m^S)]\cr
        &\leq&\Big(4m^3(M^2+\theta^2N^2)C_1^2C_2^2C_\alpha^2+LC_\alpha-1)\Big)\sum\limits_{s=1}^S\sum\limits_{t=0}^{m-1} \mathbb{E}[\Vert V_t^s\Vert^2]\cr
        &&+4m^2N^2\sum\limits_{s=1}^{\infty}((1-\phi^s)^2+(\psi^s)^2)+\frac{2}{C_\alpha}\mathbb{E}[f(\tilde{\omega}^0)-f(\omega_m^S)]\cr
        &\leq&\frac{2}{C_\alpha}\mathbb{E}[f(\tilde{\omega}^0)-f(\omega_m^S)]+4m^2N^2\sum\limits_{s=1}^{\infty}((1-\phi^s)^2+(\psi^s)^2)\cr
        &\leq&\frac{2}{C_\alpha}(f(\tilde{\omega}^0)-f(\omega^*))+4m^2N^2\sum\limits_{s=1}^{\infty}((1-\phi^s)^2+(\psi^s)^2)
    \end{eqnarray}
    The third inequality applies $4m^3(M^2+\theta^2N^2)C_1^2C_2^2C_\alpha^2+LC_\alpha-1\leq 0$, if $C_\alpha\leq\frac{2}{L+\sqrt{L^2+4\nu}}$.
    The last inequality follows from $f(\omega_t^S)>f(\omega^*)$. Hence, we have
    \begin{eqnarray}\label{T3.P.7}
        &\mathbb{E}[\Vert \text{grad}f(\omega_{a})\Vert^2]&=\frac{1}{mS}\sum\limits_{s=1}^S\sum\limits_{t=0}^{m-1}\mathbb{E}[\Vert \text{grad}f(\omega_{t}^s)\Vert^2] \cr
        &&\leq\frac{2}{mSC_\alpha}(f(\tilde{\omega}^0)-f(\omega^*))+\frac{4mN^2}{S}\sum\limits_{s=1}^{\infty}((1-\phi^s)^2+(\psi^s)^2)\cr
        &&=\mathcal{O}(\frac{1}{S})
    \end{eqnarray}
\end{proof}

\subsection{Special case 1}
Suppose the descent direction only use R-SVRG and R-SRG term, i.e.,
\begin{equation}\label{Alg2}
    \begin{split}
        V_t^s&=\tilde{\phi}_t^s\Big(\text{grad}f_{I_t^s}(\omega_t^s)-\mathcal{T}_{\omega_0^s}^{\omega_t^s} \big(\text{grad}f_{I_t^s}(\omega_0^s)-\text{grad}f(\omega_0^s)\big) \Big)\cr
        &+\tilde{\psi}_t^s\Big(\text{grad}f_{I_t^s}(\omega_t^s)-\mathcal{T}_{\omega_{t-1}^s}^{\omega_t^s} \big(\text{grad}f_{I_t^s}(\omega_{t-1}^s)-V_{t-1}^s\big) \Big)\cr
    \end{split}
\end{equation}
where $\tilde{\phi}_t^s=1-\tilde{\psi}_t^s$.
For the step size is reduced, lemma $\mathrm{\ref{L0}}$ and lemma $\mathrm{\ref{L1}}$ are satisfied. Therefore, theorem $\mathrm{\ref{T1}}$ and theorem $\mathrm{\ref{T2}}$
are still hold. Compared to theorem $\mathrm{\ref{T3}}$, we can get a similar conclusion under weaker
conditions; that is, it is not necessary to fix the inner loop
parameters. Before this, let us give a lemma.

\begin{lemma}\label{L7}
    Suppose assumption 1.a and assumption 4-7 hold, let the descent direction be $\mathrm{(\ref{Alg2})}$, then
    \begin{equation}\label{L7.1}
        \begin{split}
            \mathbb{E}[\Vert V_t^s-\text{grad}f(\omega_t^s)\Vert^2\vert \mathcal{F}_t^s]&\leq 4(M^2+\theta^2N^2)(\tilde{\phi}_t^s)^2\Vert \xi_{\omega_0^s}^{\omega_t^s}\Vert^2+8N^2(\tilde{\psi}_t^s)^2\cr
            &+(\tilde{\psi}_t^s)^2\Vert V_{t-1}^s-\text{grad}f(\omega_{t-1}^s)\Vert^2
        \end{split}
    \end{equation}
\end{lemma}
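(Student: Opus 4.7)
The plan is to specialize the argument behind Lemma~\ref{L6} to this simpler two-term combination, exploiting the constraint $\tilde{\phi}_t^s+\tilde{\psi}_t^s=1$ to eliminate the pure SGD branch. Abbreviate the SVRG and SRG summands as $A_t^s:=\text{grad}f_{I_t^s}(\omega_t^s)-\mathcal{T}_{\omega_0^s}^{\omega_t^s}(\text{grad}f_{I_t^s}(\omega_0^s)-\text{grad}f(\omega_0^s))$ and $B_t^s:=\text{grad}f_{I_t^s}(\omega_t^s)-\mathcal{T}_{\omega_{t-1}^s}^{\omega_t^s}(\text{grad}f_{I_t^s}(\omega_{t-1}^s)-V_{t-1}^s)$, so $V_t^s=\tilde{\phi}_t^sA_t^s+\tilde{\psi}_t^sB_t^s$. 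Because $\tilde{\phi}_t^s+\tilde{\psi}_t^s=1$, a copy of $\text{grad}f(\omega_t^s)$ can be redistributed to give
\begin{equation*}
V_t^s-\text{grad}f(\omega_t^s)=\tilde{\phi}_t^s\bigl(A_t^s-\text{grad}f(\omega_t^s)\bigr)+\tilde{\psi}_t^s\bigl(B_t^s-\text{grad}f(\omega_t^s)\bigr).
\end{equation*}

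Next, I would split $B_t^s$ into its conditional mean and a centered residual. The computation of Lemma~\ref{L1} shows $\mathbb{E}[B_t^s\,|\,\mathcal{F}_t^s]=\text{grad}f(\omega_t^s)+\mathcal{T}_{\omega_{t-1}^s}^{\omega_t^s}(V_{t-1}^s-\text{grad}f(\omega_{t-1}^s))$, while $A_t^s$ is already conditionally unbiased. Set $W_t:=B_t^s-\mathbb{E}[B_t^s\,|\,\mathcal{F}_t^s]$, and note that $\tilde{\phi}_t^s$ and $\tilde{\psi}_t^s$ are $\mathcal{F}_t^s$-measurable by construction in Algorithm~1. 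Then the decomposition above rewrites as a conditionally centered piece $Y_t:=\tilde{\phi}_t^s(A_t^s-\text{grad}f(\omega_t^s))+\tilde{\psi}_t^sW_t$ plus an $\mathcal{F}_t^s$-measurable tail $\tilde{\psi}_t^s\mathcal{T}_{\omega_{t-1}^s}^{\omega_t^s}(V_{t-1}^s-\text{grad}f(\omega_{t-1}^s))$, whose cross-expectation vanishes. Using the Pythagorean identity for conditional expectation together with the isometry of $\mathcal{T}$ (Assumption~4),
\begin{equation*}
\mathbb{E}[\Vert V_t^s-\text{grad}f(\omega_t^s)\Vert^2\,|\,\mathcal{F}_t^s]=\mathbb{E}[\Vert Y_t\Vert^2\,|\,\mathcal{F}_t^s]+(\tilde{\psi}_t^s)^2\Vert V_{t-1}^s-\text{grad}f(\omega_{t-1}^s)\Vert^2.
\end{equation*}
This is precisely the step in which $(\tilde{\psi}_t^s)^2$ emerges with no spurious factor of two.

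To finish, I would apply $\Vert a+b\Vert^2\leq 2\Vert a\Vert^2+2\Vert b\Vert^2$ to $Y_t$ and control the two resulting terms. For the first, the SVRG centered term is conditionally zero-mean, so variance is bounded by second moment; an application of Assumptions~5 and 6 sample-wise (mimicking the derivation of Lemma~\ref{L5} but with $\text{grad}f_{I_t^s}$ replacing $\text{grad}f$) gives $\mathbb{E}[\Vert A_t^s-\text{grad}f(\omega_t^s)\Vert^2\,|\,\mathcal{F}_t^s]\leq 2(M^2+\theta^2N^2)\Vert\xi_{\omega_0^s}^{\omega_t^s}\Vert^2$. For the second, $W_t$ is the centering of $\text{grad}f_{I_t^s}(\omega_t^s)-\mathcal{T}_{\omega_{t-1}^s}^{\omega_t^s}\text{grad}f_{I_t^s}(\omega_{t-1}^s)$; again bounding variance by second moment and then using the triangle inequality, the isometry of $\mathcal{T}$, and the uniform bound $\Vert\text{grad}f_i(\omega)\Vert\leq N$ on $\Omega$ (established in the proof of Theorem~\ref{T1}) gives $\mathbb{E}[\Vert W_t\Vert^2\,|\,\mathcal{F}_t^s]\leq 4N^2$. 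Collecting the factors of $2$ produces $4(M^2+\theta^2N^2)(\tilde{\phi}_t^s)^2\Vert\xi_{\omega_0^s}^{\omega_t^s}\Vert^2+8N^2(\tilde{\psi}_t^s)^2$ for the centered contribution, and adding the recursion tail yields exactly the claimed bound.

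The main obstacle is the bookkeeping in the Pythagorean split: one must verify that $\tilde{\phi}_t^s,\tilde{\psi}_t^s$ commute with the conditional expectation, and one must invoke Assumption~4 in two different roles---to pull $\mathcal{T}_{\omega_{t-1}^s}^{\omega_t^s}$ out of the squared norm on the recursion so that $(\tilde{\psi}_t^s)^2$ appears cleanly, and inside the variance bound on $W_t$ so that the two transported gradients can be bounded term-by-term. Beyond that, the proof is a direct specialization of Lemma~\ref{L6} with the SGD branch dropped, and no additional analytic machinery is required.
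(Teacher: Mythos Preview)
Your proposal is correct and follows essentially the same route as the paper: the paper performs the identical Pythagorean split into a conditionally centered piece (your $Y_t$) and the $\mathcal{F}_t^s$-measurable tail $\tilde{\psi}_t^s\mathcal{T}_{\omega_{t-1}^s}^{\omega_t^s}(V_{t-1}^s-\text{grad}f(\omega_{t-1}^s))$, then applies $\Vert a+b\Vert^2\leq 2\Vert a\Vert^2+2\Vert b\Vert^2$ to $Y_t$ and bounds the two summands exactly as you do, invoking the same Lemma~\ref{L5}-type estimate for the SVRG piece and the crude $4N^2$ bound (via isometry and the uniform gradient bound) for the centered SRG piece. Your explicit remark that $\tilde{\phi}_t^s,\tilde{\psi}_t^s$ are $\mathcal{F}_t^s$-measurable is a point the paper uses silently.
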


The proof of the lemma is in Appendix $\mathrm{\ref{secA}}$.

\begin{theorem}\label{T4}
    Suppose assumption 1.a, assumption 2 and assumption 4-9 hold. The sequences $\{\omega_t^s\}$
    produced by algorithm 1 with option 2 and the descent direction is $\mathrm{(\ref{Alg2})}$. Let $\alpha_t^s\equiv C_\alpha$, 
    satisfying $C_\alpha\leq\frac{1}{L+\sqrt{L^2+4\nu}}$, 
    where $\nu=4m^3(M^2+\theta^2N^2)C_1^2C_2^2$, and 
    $\sum\limits_{s=1}^{\infty}\sum\limits_{t=0}^{m-1}(\psi_t^s)^2<\infty$, 
    such that
     \begin{equation}\label{T4.1}
         \begin{split}
             \mathbb{E}[\Vert \text{grad}f(\omega_{a})\Vert^2]&\leq\frac{2}{mSC_\alpha}\mathbb{E}[f(\tilde{\omega}^0)-f(\omega^*)]+\frac{8N^2}{S}\sum\limits_{s=1}^{\infty}\sum\limits_{t=0}^{m-1}(\psi_t^s)^2\cr
             &=\mathcal{O}(\frac{1}{S})
         \end{split}
     \end{equation}
 \end{theorem}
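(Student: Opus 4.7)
The plan is to follow the same architecture as the proof of Theorem~\ref{T3} almost verbatim, substituting Lemma~\ref{L7} for Lemma~\ref{L6} and taking advantage of the special-case identity $\tilde{\phi}_t^s = 1-\tilde{\psi}_t^s$ to collapse the two error terms into a single one controlled by $(\psi_t^s)^2$. The first stage uses the retraction $L$-smoothness (Assumption~7) to write
$f(\omega_{t+1}^s) - f(\omega_t^s) \leq -C_\alpha\langle \text{grad}f(\omega_t^s),V_t^s\rangle + \tfrac{LC_\alpha^2}{2}\Vert V_t^s\Vert^2$,
then applies the polarization identity $-\langle a,b\rangle = \tfrac12(\Vert a-b\Vert^2-\Vert a\Vert^2-\Vert b\Vert^2)$ and takes expectations to obtain, exactly as in (\ref{T3.P.2}),
$\mathbb{E}[\Vert\text{grad}f(\omega_t^s)\Vert^2] \leq \tfrac{2}{C_\alpha}\mathbb{E}[f(\omega_t^s)-f(\omega_{t+1}^s)] + \mathbb{E}[\Vert V_t^s-\text{grad}f(\omega_t^s)\Vert^2] + (LC_\alpha-1)\mathbb{E}[\Vert V_t^s\Vert^2]$.

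The technical heart is to bound the variance term $\mathbb{E}[\Vert V_t^s-\text{grad}f(\omega_t^s)\Vert^2]$ using Lemma~\ref{L7}. Because $\tilde{\psi}_t^s\leq \psi_t^s\leq 1$ by construction and since $\tilde{\phi}_t^s = 1-\tilde{\psi}_t^s\in[0,1]$, the factors $(\tilde{\phi}_t^s)^2$ and $(\tilde{\psi}_t^s)^2$ multiplying the $\Vert\xi_{\omega_0^s}^{\omega_t^s}\Vert^2$ and $\Vert V_{t-1}^s-\text{grad}f(\omega_{t-1}^s)\Vert^2$ terms can be uniformly replaced by $1$, while the $8N^2(\tilde{\psi}_t^s)^2$ term is majorized by $8N^2(\psi_t^s)^2$. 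Applying the telescoping distance inequality from Assumption~8 and~9 as in (\ref{T3.P.3}) to bound $\Vert\xi_{\omega_0^s}^{\omega_t^s}\Vert^2\leq C_1^2C_2^2C_\alpha^2 m\sum_{i=0}^{m-1}\Vert V_i^s\Vert^2$, unrolling the recursion from $t$ down to $t=0$ (where the initial error vanishes because $V_0^s = \text{grad}f(\omega_0^s)$), and using $t\leq m$ yields
\begin{equation*}
\mathbb{E}[\Vert V_t^s-\text{grad}f(\omega_t^s)\Vert^2] \leq 4m^2(M^2+\theta^2 N^2)C_1^2C_2^2C_\alpha^2\sum_{i=0}^{m-1}\mathbb{E}[\Vert V_i^s\Vert^2] + 8N^2\sum_{k=0}^{m-1}(\psi_k^s)^2.
\end{equation*}
Summing over $t=0,\dots,m-1$ and $s=1,\dots,S$ gives
$\sum_{s,t}\mathbb{E}[\Vert V_t^s-\text{grad}f(\omega_t^s)\Vert^2] \leq \nu C_\alpha^2\sum_{s,t}\mathbb{E}[\Vert V_t^s\Vert^2] + 8mN^2\sum_{s=1}^\infty\sum_{t=0}^{m-1}(\psi_t^s)^2$,
where the extension to the infinite sum is justified by the hypothesis that this series converges.

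Plugging this bound back into the summed version of the $L$-smoothness inequality, the coefficient of $\sum_{s,t}\mathbb{E}[\Vert V_t^s\Vert^2]$ becomes $\nu C_\alpha^2 + LC_\alpha - 1$; the stated step-size condition makes this $\leq 0$, so the entire $V_t^s$ sum drops out. What remains is
$\sum_{s,t}\mathbb{E}[\Vert\text{grad}f(\omega_t^s)\Vert^2] \leq \tfrac{2}{C_\alpha}(f(\tilde{\omega}^0)-f(\omega^*)) + 8mN^2\sum_{s=1}^\infty\sum_{t=0}^{m-1}(\psi_t^s)^2$,
using $f(\omega_m^S)\geq f(\omega^*)$. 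Finally dividing by $mS$ and invoking the definition of $\omega_a$ under option~2 gives the advertised $\mathcal{O}(1/S)$ bound.

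The main obstacle is bookkeeping in the recursion: because $\psi_t^s$ is now allowed to vary with $t$ inside a single stage (unlike the inner-loop-fixed setup of Theorem~\ref{T3}), we cannot simply factor $(\psi^s)^2$ outside the sum over $t$ and must instead handle two different uses of the bound $(\tilde{\psi}_t^s)^2\leq 1$ simultaneously---one on the recursive coefficient, one inside the forcing term---while preserving the tight $(\psi_t^s)^2$-dependence that matches the summability hypothesis in the theorem hypothesis. Once this dual use is organized correctly the rest of the argument is a direct translation of the Theorem~\ref{T3} proof, with the only essential difference being the replacement of $4N^2((1-\phi^s)^2+(\psi^s)^2)$ by $8N^2(\psi_t^s)^2$, which is exactly what Lemma~\ref{L7} provides.
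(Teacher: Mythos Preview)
Your proposal is correct and follows essentially the same architecture as the paper's own proof: both start from the retraction $L$-smoothness inequality and the polarization identity to get the analogue of (\ref{T3.P.2}), invoke Lemma~\ref{L7} in place of Lemma~\ref{L6}, bound $(\tilde{\phi}_t^s)^2,(\tilde{\psi}_t^s)^2\leq 1$ on the recursive and distance terms while keeping $(\tilde{\psi}_t^s)^2\leq(\psi_t^s)^2$ on the $8N^2$ term, unroll the recursion using $V_0^s=\text{grad}f(\omega_0^s)$, sum, and use the step-size condition to drop the $\sum\mathbb{E}[\Vert V_t^s\Vert^2]$ term. Your handling of the unrolled forcing term as $8N^2\sum_{k=0}^{m-1}(\psi_k^s)^2$ before summing over $t$ is in fact cleaner than the paper's display (\ref{T4.P.1}), which writes the intermediate bound somewhat loosely as $8mN^2(\psi_t^s)^2$; both routes land on the same $8mN^2\sum_{s}\sum_{t}(\psi_t^s)^2$ after the double summation.
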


 \begin{proof}
     Let the parameters, step size and $\mathrm{(\ref{T3.P.3})}$ back to $\mathrm{(\ref{L7.1})}$ gives 
     \begin{eqnarray}\label{T4.P.1}
       &&\mathbb{E}[\Vert V_t^s-\text{grad}f(\omega_t^s)\Vert^2\vert \mathcal{F}_t^s]\cr
       &\leq&4(M^2+\theta^2N^2)(\tilde{\phi}_t^s)^2C_1^2C_2^2C_\alpha^2t\sum\limits_{i=0}^{t-1}\Vert V_i^s\Vert^2+8N^2(\tilde{\psi}_t^s)^2)\cr
       &&+(\tilde{\psi}_t^s)^2\Vert V_{t-1}^s-\text{grad}f(\omega_{t-1}^s)\Vert^2\cr
       &\leq&4(M^2+\theta^2N^2)C_1^2C_2^2C_\alpha^2m\sum\limits_{t=0}^{m-1}\Vert V_t^s\Vert^2+8N^2(\tilde{\psi}_t^s)^2)\cr
       &&+\Vert V_{t-1}^s-\text{grad}f(\omega_{t-1}^s)\Vert^2\cr
       &\leq&4(M^2+\theta^2N^2)C_1^2C_2^2C_\alpha^2m^2\sum\limits_{t=0}^{m-1}\Vert V_t^s\Vert^2+8mN^2(\tilde{\psi}_t^s)^2)\cr
       &&+\Vert V_{0}^s-\text{grad}f(\omega_{0}^s)\Vert^2\cr
       &=&4(M^2+\theta^2N^2)C_1^2C_2^2C_\alpha^2m^2\sum\limits_{t=0}^{m-1}\Vert V_t^s\Vert^2+8mN^2(\psi_t^s)^2)
     \end{eqnarray}
     The above inequality applies $0\leq \title{\phi}_t^s\leq 1, \title{\psi}_t^s \leq \psi_t^s$ and $t\leq m$.
     Similar to the proof of $\mathrm{(\ref{T3.P.1})}$-$\mathrm{(\ref{T3.P.6})}$ in theorem $\mathrm{\ref{T3}}$, we get
     \begin{eqnarray}\label{T4.P.2}
           &&\sum\limits_{s=1}^S\sum\limits_{t=0}^{m-1}\mathbb{E}[\Vert \text{grad}f(\omega_{t}^s)\Vert^2] \cr
           &\leq& \sum\limits_{s=1}^S\sum\limits_{t=0}^{m-1}\mathbb{E}[\Vert V_t^s-\text{grad}f(\omega_{t}^s)\Vert^2]+(LC_\alpha-1)\sum\limits_{s=1}^S\sum\limits_{t=0}^{m-1}\mathbb{E}[\Vert V_t^s\Vert^2]\cr
           &&+\frac{2}{C_\alpha}\mathbb{E}[f(\tilde{\omega}^0)-f(\omega_m^S)]\cr
           &\leq&\Big(4m^3(M^2+\theta^2N^2)C_1^2C_2^2C_\alpha^2+LC_\alpha-1)\Big)\sum\limits_{s=1}^S\sum\limits_{t=0}^{m-1} \mathbb{E}[\Vert V_t^s\Vert^2]\cr
           &&+8mN^2\sum\limits_{s=1}^{\infty}(\tilde{\psi}_t^s)^2+\frac{2}{C_\alpha}\mathbb{E}[f(\tilde{\omega}^0)-f(\omega_m^S)]\cr
           &\leq&\frac{2}{C_\alpha}\mathbb{E}[f(\tilde{\omega}^0)-f(\omega_m^S)]+8mN^2\sum\limits_{s=1}^{\infty}\sum\limits_{t=0}^{m-1}(\psi_t^s)^2\cr
           &\leq&\frac{2}{C_\alpha}(f(\tilde{\omega}^0)-f(\omega^*))+8mN^2\sum\limits_{s=1}^{\infty}\sum\limits_{t=0}^{m-1}(\psi_t^s)^2
     \end{eqnarray}
     Hence, we obtain
     \begin{eqnarray}\label{T4.P.3}
       &\mathbb{E}[\Vert \text{grad}f(\omega_{a})\Vert^2]&=\frac{1}{mS}\sum\limits_{s=1}^S\sum\limits_{t=0}^{m-1}\mathbb{E}[\Vert \text{grad}f(\omega_{t}^s)\Vert^2] \cr
       &&\leq\frac{2}{mSC_\alpha}(f(\tilde{\omega}^0)-f(\omega^*))+\frac{8N^2}{S}\sum\limits_{s=1}^{\infty}\sum\limits_{t=0}^{m-1}(\psi_t^s)^2\cr
             &&=\mathcal{O}(\frac{1}{S})
     \end{eqnarray}
 \end{proof}

 \subsection{Special case 2}
 The previous two subsections present a local convergence rate analysis of the algorithm with retraction mapping and vector transport. In this subsection, we consider a special case of the result in the previous subsection, where exponential mapping and parallel translation are chosen as retraction and vector transport. The previous theorems still hold when the retraction mapping is taken as exponential mapping and the vector transport is taken as parallel transport. For theorem $\mathrm{\ref{T3}}$, if the exponential mapping and parallel transport are used, then the convergence is global convergence. For this special case, we give only a sketch of the proofs and the result as the following corollary.
 \begin{corollary}\label{C2}
     Suppose the conditions in theorem $\mathrm{\ref{T3}}$ are hold and consider algorithm 1 with $R=Exp$ and $\mathcal{T}=\Gamma$. Let $C_\alpha\leq\frac{2}{L+\sqrt{L^2+4\nu}}$ and
     $\sum\limits_{s=1}^{\infty}((1-\phi^s)^2+(\psi^s)^2<\infty$
     where $\nu=4m^3M^2$, such that
     \begin{equation}\label{C2.1}
         \begin{split}
             \mathbb{E}[\Vert \text{grad}f(\omega_{a})\Vert^2]&\leq\frac{2}{mSC_\alpha}(f(\tilde{\omega}^0)-f(\omega^*))+\frac{4mN^2}{S}\sum\limits_{s=1}^{\infty}((1-\phi^s)^2+(\psi^s)^2)\cr
             &=\mathcal{O}(\frac{1}{S})
         \end{split}
     \end{equation}
 \end{corollary}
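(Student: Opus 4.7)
The plan is to specialize the argument of Theorem~\ref{T3} to the case $R=Exp$, $\mathcal{T}=\Gamma$, and observe that this choice collapses several of the constants that appeared in the general retraction/vector-transport analysis. Specifically, Assumption~6 holds trivially with $\theta=0$ because $\Gamma_x^y\eta-\mathcal{T}_x^y\eta$ vanishes identically; similarly, when $R=Exp$ the tangent-vector norm $\Vert\xi\Vert$ equals $\text{dist}(x,Exp_x(\xi))$, so Assumption~8 holds with $C_1=C_2=1$ uniformly, with no local restriction on $\delta_{C_1,C_2}$. This is what upgrades the local conclusion of Theorem~\ref{T3} to a global one in the corollary.

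With these simplifications Lemma~\ref{L5} reduces to $\mathbb{E}[\Vert\text{grad}f(\omega_2)-\Gamma_{\omega_1}^{\omega_2}\text{grad}f(\omega_1)\Vert^2\vert\mathcal{F}_t^s]\leq 2M^2\Vert\xi\Vert^2$, and consequently the estimator-error bound of Lemma~\ref{L6} becomes
\begin{equation*}
\mathbb{E}[\Vert V_t^s-\text{grad}f(\omega_t^s)\Vert^2\vert\mathcal{F}_t^s]\leq 4M^2(\phi_t^s)^2\Vert\xi_{\omega_0^s}^{\omega_t^s}\Vert^2+4N^2\bigl((1-\phi_t^s)^2+(\psi_t^s)^2\bigr)+(\psi_t^s)^2\Vert V_{t-1}^s-\text{grad}f(\omega_{t-1}^s)\Vert^2.
\end{equation*}
The chained triangle-inequality bound $\mathrm{(\ref{T3.P.3})}$ with $C_1=C_2=1$ reads $\Vert\xi_{\omega_0^s}^{\omega_t^s}\Vert^2\leq C_\alpha^2 t\sum_{i=0}^{t-1}\Vert V_i^s\Vert^2$. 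Substituting and summing over $t=0,\ldots,m-1$ and $s=1,\ldots,S$ reproduces the analogue of $\mathrm{(\ref{T3.P.5})}$ with every occurrence of $(M^2+\theta^2N^2)C_1^2C_2^2$ replaced by $M^2$.

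Combining this with the smoothness-based Taylor estimate $\mathrm{(\ref{T3.P.2})}$ and telescoping the functional values over $t$ and $s$ as in $\mathrm{(\ref{T3.P.6})}$ produces
\begin{equation*}
\sum_{s=1}^S\sum_{t=0}^{m-1}\mathbb{E}[\Vert\text{grad}f(\omega_t^s)\Vert^2]\leq\bigl(4m^3M^2C_\alpha^2+LC_\alpha-1\bigr)\sum_{s=1}^S\sum_{t=0}^{m-1}\mathbb{E}[\Vert V_t^s\Vert^2]+4m^2N^2\sum_{s=1}^{\infty}\bigl((1-\phi^s)^2+(\psi^s)^2\bigr)+\frac{2}{C_\alpha}\bigl(f(\tilde{\omega}^0)-f(\omega^*)\bigr).
\end{equation*}
The hypothesis $C_\alpha\leq 2/(L+\sqrt{L^2+4\nu})$ with $\nu=4m^3M^2$ is precisely what makes the leading bracket nonpositive, so the $\Vert V_t^s\Vert^2$ terms drop out. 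Dividing by $mS$ and invoking Option~2 in the definition of $\omega_a$ yields the stated $\mathcal{O}(1/S)$ bound.

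The main obstacle is essentially careful bookkeeping rather than a new ingredient: the argument tracks Theorem~\ref{T3} line by line. The two delicate points worth explicit mention are (i) justifying that with $R=Exp$ and $\mathcal{T}=\Gamma$ the constants $\theta=0$ and $C_1=C_2=1$ are valid globally, so the iterates need not be confined to a local retractive neighbourhood (this is what promotes the conclusion from local to global convergence), and (ii) confirming that the slimmer constant $\nu=4m^3M^2$ is exactly what the step-size condition requires to cancel the $\Vert V_t^s\Vert^2$ contribution.
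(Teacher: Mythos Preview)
Your proposal is correct and follows essentially the same approach as the paper: the paper's proof simply observes that with $R=Exp$ and $\mathcal{T}=\Gamma$ one has $\theta=0$ and $C_1=C_2=1$, so that Lemma~\ref{L6} and inequality~\eqref{T3.P.3} simplify exactly as you describe, and then states that ``other proofs are the same as theorem~\ref{T3}''. Your write-up is in fact more detailed than the paper's sketch, including the explicit display of the reduced $\nu$ and the remark on globality; nothing more is needed.
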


 \begin{proof}
   If the retraction mapping is taken as exponential mapping and the vector transport
   is taken as parallel transport, then the inequality $\mathrm{(\ref{L6.1})}$ in lemma $\mathrm{\ref{L6}}$ becoming
     \begin{equation}\label{C2.P.1}
         \begin{split}
             \mathbb{E}[\Vert V_t^s-\text{grad}f(\omega_t^s)\Vert^2\vert \mathcal{F}_t^s]
            &\leq 4M^2(\phi_t^s)^2\Vert \xi_{\omega_0^s}^{\omega_t^s}\Vert^2+4N^2\big((1-\phi_t^s)^2\cr
            &+(\psi_t^s)^2\big)+(\psi_t^s)^2\Vert V_{t-1}^s-\text{grad}f(\omega_{t-1}^s)\Vert^2
         \end{split}
     \end{equation}
     And the inequality $\mathrm{(\ref{T3.P.3})}$ in theorem $\mathrm{\ref{T3}}$ becoming
     \begin{equation}\label{C2.P.2}
         \begin{split}
             \Vert \xi_{\omega_0^s}^{\omega_t^s}\Vert^2\leq C_\alpha^2t\sum\limits_{i=0}^{t-1}\Vert V_i^s\Vert^2
         \end{split}
     \end{equation}
     Other proofs are the same as theorem $\mathrm{\ref{T3}}$
 \end{proof}
 \begin{remark}
   These are equivalent to $\theta=0,C_1=C_2=1$. But the convergence rate of corollary is global.
 \end{remark}

 \section{Riemannian Stochastic Hybrid Gradient Algorithm with time-varying Parameters}
 In this section, we will prove the convergence rate under both step size are reduced and fixed.
When the step size is reduced, if we choose $\omega_a=\tilde{\omega}^S$, it is difficult to analyse the convergence of $\omega_a$. It is different from algorithm 1, and we only consider option 2 of R-SHG algorithm with time-varying parameters to analyse the convergence. We can quantitatively research the convergence of $\omega_a$. Of course, the advantage is that we only need to use assumption 1.a. Here we will propose the second algorithm in this paper.\\
 \begin{breakablealgorithm}\label{Alg3}
     \footnotesize
     \caption{R-SHG algorithm with time-varying parameters}
     \begin{algorithmic}[1]
         \State\textbf {Input:}  step size $\alpha_t^s$, frequency $m>0$, the positive parameters $\psi_t^s$, $\phi_t^s$.
         \State\textbf {Initialize:}  $\tilde{\omega}^0$;
         \For {s=1,2,...,S }
         \State Caclulate the full Riemannian gradient $\text{grad}f(\tilde{\omega}^{s-1})$
         \State Store $\omega_0^s=\tilde{\omega}^{s-1}$, $V_0^s=\text{grad}f(\tilde{\omega}_0^s)$, $\omega_1^s=R_{\omega_0^s}(-\alpha_0^sV_0^s)$
         \For {t=1,2,...,m-1}
         \State Choose $I_t^s\in\{1,2,...,n\}$ uniformly at random
         \State Caclulate the descent direction
                 \begin{equation*}
                         \begin{split}
                             V_t^s&=\phi_t^s\Big(\text{grad}f_{I_t^s}(\omega_t^s)-\mathcal{T}_{\omega_0^s}^{\omega_t^s} \big(\text{grad}f_{I_t^s}(\omega_0^s)-\text{grad}f(\omega_0^s)\big) \Big)\cr
                             &+\psi_t^s\Big(\text{grad}f_{I_t^s}(\omega_t^s)-\mathcal{T}_{\omega_{t-1}^s}^{\omega_t^s} \big(\text{grad}f_{I_t^s}(\omega_{t-1}^s)-V_{t-1}^s\big) \Big)\cr
                             &+(1-\phi_t^s-\tilde{\psi}_t^s)\text{grad}f_{I_t^s}(\omega_t^s)
                         \end{split}
                     \end{equation*}
         \State$\omega_{t+1}^s=R_{\omega_t^s}(-\alpha_t^sV_t^s)$
         \EndFor
         \State $\tilde{\omega}^s=\omega_m^s$
         \EndFor
         \State We chosen ${\omega}_a$ uniformly randomly from $\{\{\omega_t^s\}_{t=0}^{m-1}\}_{s=1}^S$
         \State \textbf {Onput:} ${\omega}_a$
     \end{algorithmic}
 \end{breakablealgorithm}
 \begin{remark}
     If $\psi_t^s\equiv 0$, then the two algorithms are equivalent. Therefore, the following results are also
      a supplement to the literature \cite{SatoH1} if $\psi_t^s\equiv 0$.
 \end{remark}

\subsection{Step size is reduced}
Before the theorem, we need to introduce the following two lemmas. Here,
we first present a lemma that bounds the estimation
error of the estimator.

\begin{lemma}\label{L8}
    Suppose assumption 1.a and assumption 4-7 hold, then
    \begin{equation}\label{L8.1}
        \begin{split}
            &\mathbb{E}[\Vert V_t^s-\text{grad}f(\omega_t^s)\Vert^2\vert \mathcal{F}_t^s]\cr
            \leq&\quad 6(\phi_t^s)^2(M^2+\theta^2N^2)\Vert \xi_{\omega_0^s}^{\omega_t^s}\Vert^2+6(\psi_t^s)^2(M^2+\theta^2N^2)\Vert \xi_{\omega_1}^{\omega_2}\Vert^2\cr
            &+12(1-\phi_t^s-\psi_t^s)^2N^2+(\psi_t^s)^2\Vert V_{t-1}^s-\text{grad}f(\omega_{t-1}^s)\Vert^2\cr
        \end{split}
    \end{equation}
\end{lemma}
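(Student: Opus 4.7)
The plan is to decompose $V_t^s-\text{grad}f(\omega_t^s)$ into a conditionally centered fluctuation plus the (nonzero) bias term, bound each piece separately, and glue them back together using a Pythagorean identity. First, I would compute $\mathbb{E}[V_t^s\vert\mathcal{F}_t^s]$ exactly as in the proof of Lemma $\mathrm{\ref{L1}}$: applying $\mathbb{E}[\text{grad}f_{I_t^s}(\omega)\vert\mathcal{F}_t^s]=\text{grad}f(\omega)$ termwise to the descent direction of Algorithm 2 gives
\begin{equation*}
\mathbb{E}[V_t^s\vert\mathcal{F}_t^s]=\text{grad}f(\omega_t^s)+\psi_t^s\mathcal{T}_{\omega_{t-1}^s}^{\omega_t^s}\bigl(V_{t-1}^s-\text{grad}f(\omega_{t-1}^s)\bigr),
\end{equation*}
so the bias is the R-SRG remainder $\psi_t^s\mathcal{T}_{\omega_{t-1}^s}^{\omega_t^s}(V_{t-1}^s-\text{grad}f(\omega_{t-1}^s))$.

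Second, I would split the target via the orthogonality identity
\begin{equation*}
\mathbb{E}[\Vert V_t^s-\text{grad}f(\omega_t^s)\Vert^2\vert\mathcal{F}_t^s]=\mathbb{E}[\Vert V_t^s-\mathbb{E}[V_t^s\vert\mathcal{F}_t^s]\Vert^2\vert\mathcal{F}_t^s]+\Vert\mathbb{E}[V_t^s\vert\mathcal{F}_t^s]-\text{grad}f(\omega_t^s)\Vert^2.
\end{equation*}
By Assumption 4 the vector transport $\mathcal{T}$ is isometric, so the second summand equals $(\psi_t^s)^2\Vert V_{t-1}^s-\text{grad}f(\omega_{t-1}^s)\Vert^2$, which is exactly the last term of the stated bound.

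Third, I would control the centered fluctuation. Writing $V_t^s-\mathbb{E}[V_t^s\vert\mathcal{F}_t^s]=\phi_t^sA+\psi_t^sB+(1-\phi_t^s-\psi_t^s)C$ with
\begin{align*}
A&=\text{grad}f_{I_t^s}(\omega_t^s)-\text{grad}f(\omega_t^s)-\mathcal{T}_{\omega_0^s}^{\omega_t^s}\bigl(\text{grad}f_{I_t^s}(\omega_0^s)-\text{grad}f(\omega_0^s)\bigr),\\
B&=\text{grad}f_{I_t^s}(\omega_t^s)-\text{grad}f(\omega_t^s)-\mathcal{T}_{\omega_{t-1}^s}^{\omega_t^s}\bigl(\text{grad}f_{I_t^s}(\omega_{t-1}^s)-\text{grad}f(\omega_{t-1}^s)\bigr),\\
C&=\text{grad}f_{I_t^s}(\omega_t^s)-\text{grad}f(\omega_t^s),
\end{align*}
I would apply $\Vert x+y+z\Vert^2\leq 3(\Vert x\Vert^2+\Vert y\Vert^2+\Vert z\Vert^2)$ and take conditional expectation. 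For $A$ and $B$, using $\mathbb{E}[\Vert Y-\mathbb{E}Y\Vert^2]\leq\mathbb{E}[\Vert Y\Vert^2]$ together with a mini-batch analogue of Lemma $\mathrm{\ref{L5}}$ (obtained by inserting $\Gamma$, combining Assumption 5 with Assumption 6, bounding $\Vert\text{grad}f_i\Vert\leq N$, and then averaging over $I_t^s$) produces the coefficients $6(\phi_t^s)^2(M^2+\theta^2N^2)\Vert\xi_{\omega_0^s}^{\omega_t^s}\Vert^2$ and $6(\psi_t^s)^2(M^2+\theta^2N^2)\Vert\xi_{\omega_{t-1}^s}^{\omega_t^s}\Vert^2$. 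For $C$, the crude split $\Vert a-b\Vert^2\leq 2\Vert a\Vert^2+2\Vert b\Vert^2$ combined with the gradient boundedness yields $\mathbb{E}[\Vert C\Vert^2\vert\mathcal{F}_t^s]\leq 4N^2$, so that term contributes $12(1-\phi_t^s-\psi_t^s)^2N^2$. Adding everything back into the orthogonality identity gives the claim.

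No step here is genuinely deep; the main obstacle is purely bookkeeping, namely tracking constants through several Jensen/Cauchy--Schwarz applications so that the triple $(3\cdot 2,\,3\cdot 2,\,3\cdot 4)=(6,6,12)$ emerges to match the stated bound. I also note that the symbol $\Vert\xi_{\omega_1}^{\omega_2}\Vert^2$ appearing in the statement is evidently a typo for $\Vert\xi_{\omega_{t-1}^s}^{\omega_t^s}\Vert^2$, i.e.\ the retraction vector linking the two R-SRG anchor points.
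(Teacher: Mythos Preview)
Your proposal is correct and follows essentially the same route as the paper: split $V_t^s-\text{grad}f(\omega_t^s)$ into the $\mathcal{F}_t^s$-measurable bias $\psi_t^s\mathcal{T}_{\omega_{t-1}^s}^{\omega_t^s}(V_{t-1}^s-\text{grad}f(\omega_{t-1}^s))$ and a conditionally centered three-term sum, use the Pythagorean identity (the paper does this by expanding the square and observing the cross term vanishes), then apply $\Vert x+y+z\Vert^2\le 3(\Vert x\Vert^2+\Vert y\Vert^2+\Vert z\Vert^2)$ together with Lemma~\ref{L5}-type bounds and the crude $4N^2$ estimate to obtain the constants $6,6,12$. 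Your observation about the typo $\xi_{\omega_1}^{\omega_2}$ is also correct.
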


Now, we introduce another lemma. The bound produced by the lemma is very important in the proof of the theorem.

\begin{lemma}\label{L9}
    Suppose assumption 1.a, assumption 2 and assumption 4-8 hold. The sequences $\{\omega_t^s\}$
    produced by algorithm 2. Let $\gamma>1, \kappa=\inf\{x\in N^*\vert x^{\gamma}\geq x+1\}$,
    $\alpha_t^s=(t+s+\kappa+2)^{-P}C_\alpha$ and $\psi_t^s=1-(t+s+\kappa+1)^{-Q}C_\psi$,
    satisfying $\max\{\frac{\gamma Q-1}{\gamma-1},\frac{Q}{2}\}\leq P\leq Q$, $0<P,Q<1$. And
    $C_\psi\geq P+C_\alpha^2\cdot 6\beta(M^2+\theta^2N^2)$, where $\beta>2$, for any $2\leq t\leq m-1$ such that
    \begin{eqnarray}\label{L9.1}
        &&\frac{\mathbb{E}[\Vert V_t^s-\text{grad}f(\omega_t^s)\Vert^2]}{\alpha_{t-1}^s}- \frac{\mathbb{E}[\Vert V_{t-1}^s-\text{grad}f(\omega_{t-1}^s)\Vert^2]}{\alpha_{t-2}^s}\cr
        &\leq &\frac{6(M^2+\theta^2N^2)}{\alpha_{t-1}^s}(\phi_t^s)^2\rho^2+12(M^2+\theta^2N^2)\alpha_{t-1}^s \mathbb{E}[\Vert \text{grad}f(\omega_{t-1}^s)\Vert^2]\cr
        &&+6(2-\beta)(M^2+\theta^2N^2)\alpha_{t-1}^s\mathbb{E}[\Vert V_{t-1}^s-\text{grad}f(\omega_{t-1}^s)\Vert^2]\cr
        &&+\frac{12(1-\psi_t^s)^2N^2}{\alpha_{t-1}^s}
    \end{eqnarray}
\end{lemma}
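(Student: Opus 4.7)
The plan is to start from Lemma~$\mathrm{\ref{L8}}$ and convert its conclusion into the claimed difference-inequality by rewriting the two retraction-inverse norms and then subtracting the $(t-1)$ analogue. First, I would bound $\Vert \xi_{\omega_0^s}^{\omega_t^s}\Vert^2 \le \rho^2$ using the $\rho$-totally retractive neighbourhood part of Assumption~2, and rewrite $\Vert \xi_{\omega_{t-1}^s}^{\omega_t^s}\Vert^2 = (\alpha_{t-1}^s)^2 \Vert V_{t-1}^s\Vert^2$ via the retraction update $\omega_t^s = R_{\omega_{t-1}^s}(-\alpha_{t-1}^s V_{t-1}^s)$. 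Then apply the elementary bound $\Vert V_{t-1}^s\Vert^2 \le 2\Vert \text{grad}f(\omega_{t-1}^s)\Vert^2 + 2\Vert V_{t-1}^s - \text{grad}f(\omega_{t-1}^s)\Vert^2$ together with $(\psi_t^s)^2 \le 1$ to clean the coefficients, and replace $(1-\phi_t^s-\psi_t^s)^2$ by $(1-\psi_t^s)^2$, which is valid because $\phi_t^s \ge 0$ and $\phi_t^s + \psi_t^s \le 1$.

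After taking full expectations and dividing through by $\alpha_{t-1}^s$, three of the four target contributions appear directly: the $6(M^2+\theta^2N^2)(\phi_t^s)^2\rho^2/\alpha_{t-1}^s$ term, the $12(M^2+\theta^2N^2)\alpha_{t-1}^s \mathbb{E}[\Vert\text{grad}f(\omega_{t-1}^s)\Vert^2]$ term, and the $12(1-\psi_t^s)^2N^2/\alpha_{t-1}^s$ term. The remaining piece is a multiple of $\mathbb{E}[\Vert V_{t-1}^s - \text{grad}f(\omega_{t-1}^s)\Vert^2]$ with coefficient $(\psi_t^s)^2/\alpha_{t-1}^s + 12(M^2+\theta^2N^2)\alpha_{t-1}^s$. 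Subtracting $\mathbb{E}[\Vert V_{t-1}^s - \text{grad}f(\omega_{t-1}^s)\Vert^2]/\alpha_{t-2}^s$ from both sides, matching this against $6(2-\beta)(M^2+\theta^2N^2)\alpha_{t-1}^s = 12(M^2+\theta^2N^2)\alpha_{t-1}^s - 6\beta(M^2+\theta^2N^2)\alpha_{t-1}^s$ reduces the lemma to proving the scalar inequality
\begin{equation*}
\frac{1}{\alpha_{t-2}^s} \; - \; \frac{(\psi_t^s)^2}{\alpha_{t-1}^s} \; \ge \; 6\beta(M^2+\theta^2N^2)\,\alpha_{t-1}^s.
\end{equation*}

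This functional inequality is the main technical obstacle. Writing $u = t+s+\kappa+1$ and plugging in $\alpha_{t-1}^s = u^{-P}C_\alpha$, $\alpha_{t-2}^s = (u-1)^{-P}C_\alpha$, $\psi_t^s = 1 - u^{-Q}C_\psi$, multiplying through by $u^{-P}/C_\alpha$, and expanding the square shows it is equivalent to
\begin{equation*}
(1-1/u)^P - \bigl(1 - u^{-Q}C_\psi\bigr)^2 \; \ge \; 6\beta(M^2+\theta^2N^2)\,C_\alpha^2\,u^{-2P}.
\end{equation*}
I would lower-bound $(1-1/u)^P$ by a Bernoulli/exponential-type estimate of the form $(1-1/u)^P \ge 1 - P/(u-1)$ (or sharper, $\ge 1 - P/u - O(1/u^2)$), expand $(1 - u^{-Q}C_\psi)^2 = 1 - 2C_\psi u^{-Q} + C_\psi^2 u^{-2Q}$, and then compare powers of $u$ using $Q/2 \le P \le Q < 1$, so that in particular $u^{-2P} \le u^{-Q}$ and $u^{-1} \le u^{-Q}$. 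The role of $\gamma$ and $\kappa$ is precisely to certify that the starting index $u \ge \kappa + 3$ (which holds because $t \ge 2$ and $s \ge 1$) already lies in the regime where $u^\gamma \ge u+1$, so that the higher-order terms in the Bernoulli expansion are controlled uniformly; the inequality $(\gamma Q - 1)/(\gamma - 1) \le P$ calibrates the relative decay rates so that the $-P/u$ correction is absorbed into $2C_\psi u^{-Q}$. The explicit assumption $C_\psi \ge P + 6\beta(M^2+\theta^2N^2)C_\alpha^2$ is then exactly what is needed to close the inequality term by term for every admissible $u$.

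The preceding steps (Lemma~$\mathrm{\ref{L8}}$, Assumption~2, retraction identity, and triangle-type estimates) are routine; the difficulty is concentrated in verifying this scalar inequality, which is where the carefully tuned rates $P, Q, \gamma, \kappa$ and the size of $C_\psi$ are all used simultaneously.
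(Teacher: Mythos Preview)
Your plan is essentially the paper's proof: start from Lemma~\ref{L8}, use Assumption~2 for $\Vert\xi_{\omega_0^s}^{\omega_t^s}\Vert^2\le\rho^2$ and the update rule for $\Vert\xi_{\omega_{t-1}^s}^{\omega_t^s}\Vert^2$, split $\Vert V_{t-1}^s\Vert^2$ by the parallelogram inequality, take expectations, divide by $\alpha_{t-1}^s$, subtract, and reduce everything to a scalar inequality in $u=t+s+\kappa+1$ to be verified with the parameter constraints. Two small points where the paper is slightly cleaner than your outline: (i) the paper applies $(\psi_t^s)^2\le\psi_t^s$ \emph{before} isolating the scalar inequality, so that instead of your displayed target it proves the (stronger) version with $\psi_t^s$ in place of $(\psi_t^s)^2$, eliminating the $C_\psi^2u^{-2Q}$ term from your expansion; and (ii) rather than lower-bounding $(1-1/u)^P$, the paper upper-bounds the increment $u^P-(u-1)^P\le P(u-1)^{P-1}$ by concavity and then uses the equivalence $P\ge(\gamma Q-1)/(\gamma-1)\Leftrightarrow(1-P)/(Q-P)\ge\gamma$ together with $x\ge\kappa\Rightarrow x^\gamma\ge x+1$ to obtain $(u-1)^{P-1}\le u^{P-Q}$, after which $C_\psi\ge P+6\beta(M^2+\theta^2N^2)C_\alpha^2$ and $P\ge Q/2$ (giving $u^{P-Q}\ge u^{-P}$) close the inequality directly.
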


The proofs of lemma $\mathrm{\ref{L8}}$ and $\mathrm{\ref{L9}}$ see Appendix $\mathrm{\ref{secA}}$. The following theorem will introduce the main result about the reduced step size.

\begin{theorem}\label{T5}
    Suppose the conditions in lemma $\mathrm{\ref{L9}}$ are hold, if $C_\alpha \leq \min\{\frac{1}{L},\sqrt{\frac{(1-P)}{6\beta(M^2+\theta^2N^2)}}\}$,
     where $\beta >4$, let $\phi_t^s=(t+s+\kappa+1)^{-R}C_\phi$, $C_\phi \leq C_\psi$ and $R\geq Q$, then
    \begin{equation}\label{T5.1}
        \begin{split}
        &\mathbb{E}[\Vert \text{grad}f(\omega_a)\Vert^2]\cr
        \leq &\frac{\frac{2(\beta-2)}{\beta-4}\Big(\mathbb{E}[f(\tilde{\omega}^{0})-f(\omega^*)]+\sum\limits_{s=1}^{S}\sum\limits_{t=0}^{m-1}\Big(\frac{\rho^2C_\phi^2}{2(\beta-2)C_\alpha}+\frac{12\eta N^2C_\psi^2}{C_\alpha}\Big)(t+s+\kappa+2)^{P-2Q}\big)}{mSC_\alpha(m+S+\kappa+1)^{-P}} \cr
        =&\mathcal{O}(\frac{1}{S^{2(Q-P)}})
        \end{split}
    \end{equation}
    Furthermore, the fastest convergence rate of $\mathbb{E}[\Vert \text{grad}f(\omega_a)\Vert^2]$ at least can get $\mathcal{O}(\frac{1}{S^{\frac{2}{\gamma+1}}})\rightarrow \mathcal{O}(\frac{1}{S})$
\end{theorem}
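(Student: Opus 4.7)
The strategy is to combine an L-smooth descent inequality with the error recursion of Lemma~\ref{L9}, telescope both estimates across the $(s,t)$ grid, and then optimise the resulting exponent in $P,Q$ subject to the constraints of Lemma~\ref{L9}. First, applying Assumption 7 with $\xi = -\alpha_t^s V_t^s$ and the polarisation identity $-2\langle a,b\rangle = \Vert a-b\Vert^2 - \Vert a\Vert^2 - \Vert b\Vert^2$ gives
\begin{equation*}
f(\omega_{t+1}^s) - f(\omega_t^s) \leq \tfrac{\alpha_t^s}{2}\Vert V_t^s-\text{grad}f(\omega_t^s)\Vert^2 - \tfrac{\alpha_t^s}{2}\Vert\text{grad}f(\omega_t^s)\Vert^2 + \tfrac{\alpha_t^s(L\alpha_t^s-1)}{2}\Vert V_t^s\Vert^2.
\end{equation*}
Because $C_\alpha \leq 1/L$, the last term is nonpositive. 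Taking expectation, summing over all $(s,t)$, and telescoping the $f$-values yields
\begin{equation*}
\sum_{s=1}^S\sum_{t=0}^{m-1}\tfrac{\alpha_t^s}{2}\mathbb{E}\Vert\text{grad}f(\omega_t^s)\Vert^2 \leq \mathbb{E}[f(\tilde\omega^0)-f(\omega^*)] + \sum_{s=1}^S\sum_{t=0}^{m-1}\tfrac{\alpha_t^s}{2}\mathbb{E}\Vert V_t^s-\text{grad}f(\omega_t^s)\Vert^2.
\end{equation*}

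Next, to control the variance sum I would telescope Lemma~\ref{L9}. Writing $E_t^s = \mathbb{E}\Vert V_t^s-\text{grad}f(\omega_t^s)\Vert^2$ and moving the negative $6(2-\beta)(M^2+\theta^2N^2)\alpha_{t-1}^s E_{t-1}^s$ term to the left, the left-hand side becomes
$E_t^s/\alpha_{t-1}^s - E_{t-1}^s/\alpha_{t-2}^s + 6(\beta-2)(M^2+\theta^2N^2)\alpha_{t-1}^s E_{t-1}^s$, and summing in $t$ makes the first two terms telescope while preserving $\sum_{s,t}\alpha_t^s E_t^s$ on the left. Since $E_0^s=0$, after rearrangement one obtains
\begin{equation*}
\sum_{s,t}\alpha_t^s E_t^s \leq \tfrac{1}{\beta-2}\sum_{s,t}\Bigl[\tfrac{6(M^2+\theta^2 N^2)(\phi_t^s)^2\rho^2}{\alpha_t^s} + \tfrac{12(1-\psi_t^s)^2 N^2}{\alpha_t^s}\Bigr] + \tfrac{12(M^2+\theta^2 N^2)C_\alpha^2}{\beta - 2}\sum_{s,t}\alpha_t^s \mathbb{E}\Vert\text{grad}f(\omega_t^s)\Vert^2.
\end{equation*}
The step-size constraint $C_\alpha^2 \leq (1-P)/(6\beta(M^2+\theta^2 N^2))$ together with $\beta > 4$ makes the prefactor of the cross-gradient sum strictly smaller than $1/2$, so this term can be absorbed into the LHS of the descent inequality, leaving a clean bound of the form
$\sum_{s,t}\alpha_t^s \mathbb{E}\Vert\text{grad}f(\omega_t^s)\Vert^2 \leq K_1(f(\tilde\omega^0)-f(\omega^*)) + K_2\sum_{s,t}\bigl[(\phi_t^s)^2\rho^2 + (1-\psi_t^s)^2 N^2\bigr]/\alpha_t^s$.

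Now I would substitute $\alpha_t^s = C_\alpha(t+s+\kappa+2)^{-P}$, $\phi_t^s = C_\phi(t+s+\kappa+1)^{-R}$, and $1-\psi_t^s = C_\psi(t+s+\kappa+1)^{-Q}$. Since $R \geq Q$ and $C_\phi \leq C_\psi$, the $\phi$ contribution is dominated by the $\psi$ contribution, and the parameter sum reduces to a constant multiple of $\sum_{s,t}(t+s+\kappa+2)^{P-2Q}$. Since $\omega_a$ is uniform on the iterates,
\begin{equation*}
\mathbb{E}\Vert\text{grad}f(\omega_a)\Vert^2 = \tfrac{1}{mS}\sum_{s,t}\mathbb{E}\Vert\text{grad}f(\omega_t^s)\Vert^2 \leq \tfrac{1}{mS\,\alpha_{m-1}^S}\sum_{s,t}\alpha_t^s\mathbb{E}\Vert\text{grad}f(\omega_t^s)\Vert^2,
\end{equation*}
with $\alpha_{m-1}^S = C_\alpha(m+S+\kappa+1)^{-P}$. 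Standard estimates $\sum_{s=1}^S s^{P-2Q} = \mathcal{O}(S^{1+P-2Q})$ (valid when $2Q-P<1$) produce precisely the explicit bound of the theorem and the rate $\mathcal{O}(S^{2(P-Q)}) = \mathcal{O}(1/S^{2(Q-P)})$.

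Finally, to identify the fastest rate I would maximise $2(Q-P)$ over $\max\{Q/2,(\gamma Q-1)/(\gamma-1)\}\leq P\leq Q$ with $0<P,Q<1$. The two lower bounds on $P$ coincide at $Q = 2/(\gamma+1)$, forcing $P = 1/(\gamma+1)$ and yielding $2(Q-P) = 2/(\gamma+1)$; as $\gamma \to 1^+$ this tends to $1$, giving $\mathcal{O}(1/S)$. The main obstacle is the joint telescoping of Lemma~\ref{L9} across both $t$ and $s$ while ensuring that the cross-term in $\mathbb{E}\Vert\text{grad}f\Vert^2$ coming from the recursion is absorbed strictly by the descent estimate; the numerical conditions $C_\alpha^2 \leq (1-P)/(6\beta(M^2+\theta^2N^2))$ and $\beta > 4$ are tailored to leave a positive margin, so the delicate bookkeeping of constants is where the real work lies, after which the rate statement follows from a standard calculation of power sums.
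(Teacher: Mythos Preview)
Your overall strategy matches the paper's: combine the $L$-smooth descent inequality with the error recursion of Lemma~\ref{L9}, telescope, divide by $mS\alpha_{m-1}^S$, and optimise $2(Q-P)$ over the feasible region. The paper organises the combination slightly differently: it scales Lemma~\ref{L9} by $\eta=\frac{1}{12(\beta-2)(M^2+\theta^2N^2)}$ and adds it \emph{pointwise} to the descent inequality so that the $\pm\frac{\alpha_t^s}{2}\mathbb{E}\Vert V_t^s-\text{grad}f(\omega_t^s)\Vert^2$ terms cancel exactly, yielding a clean coefficient $\frac{(4-\beta)}{2(\beta-2)}$ on the gradient, which it then telescopes in $t$ for each fixed $s$ (treating $t=0,1$ separately). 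Your route---telescope Lemma~\ref{L9} first to bound $\sum\alpha_t^s E_t^s$, then substitute into the summed descent inequality---is equivalent up to bookkeeping.

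There is one slip worth flagging. After dividing by $6(\beta-2)(M^2+\theta^2N^2)$, the prefactor on the cross-gradient sum is simply $\frac{2}{\beta-2}$, not $\frac{12(M^2+\theta^2N^2)C_\alpha^2}{\beta-2}$; the absorption into the descent estimate therefore works because $\beta>4$ alone (giving $\frac{2}{\beta-2}<1$), not because of the bound on $C_\alpha$. The condition $C_\alpha\leq\sqrt{(1-P)/(6\beta(M^2+\theta^2N^2))}$ plays a different role: it guarantees that the interval $[P+6\beta(M^2+\theta^2N^2)C_\alpha^2,\,1]$ for $C_\psi$ required in Lemma~\ref{L9} is nonempty (see the remark following the theorem). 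This does not affect the validity of your argument, only the attribution of which hypothesis is doing what.
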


\begin{remark}
    The inequalities in theorem can get some results and guarantee some conditions hold.
    $C_\alpha \leq \sqrt{\frac{(1-P)}{6\beta(M^2+\theta^2N^2)}}$ can ensure a positive constant $C_{\psi}$ exists, such that $C_{\psi}\in [P+C_\alpha^2\cdot 6\beta(M^2+\theta^2N^2), 1]$, and $C_\alpha < \frac{1}{L}$ implies $ \alpha_t^s<\frac{1}{L}$.
    Moreover, the conditions $C_\phi \leq C_\psi, R\geq Q$ can make $\psi_t^s+\phi_t^s\leq 1$ hold.
    Since assumption 9 is not used, the convergence is global.
\end{remark}

\begin{proof}
    By assumption 8 and $C_\alpha\leq \frac{1}{L}$, we have
    \begin{eqnarray}\label{T5.P.1}
        &&f(\omega_{t+1}^s)\cr
        &\leq& f(\omega_t^s)-\langle \alpha_t^s V_t^s,\text{grad}f(\omega_t^s)\rangle+\frac{(\alpha_t^s)^2L}{2}\Vert V_t^s\Vert^2\cr
        &= &f(\omega_t^s)-\frac{\alpha_t^s}{2}\Vert \text{grad}f(\omega_t^s)\Vert^2-\frac{\alpha_t^s}{2}\Vert V_t^s\Vert^2+\frac{\alpha_t^s}{2}\Vert V_t^s-\text{grad}f(\omega_t^s)\Vert^2+\frac{(\alpha_t^s)^2L}{2}\Vert V_t^s\Vert^2\cr
        &= &f(\omega_t^s)-\frac{\alpha_t^s}{2}\Vert \text{grad}f(\omega_t^s)\Vert^2+\frac{\alpha_t^s}{2}\Vert V_t^s-\text{grad}f(\omega_t^s)\Vert^2+(\frac{(\alpha_t^s)^2L}{2}-\frac{\alpha_t^s}{2})\Vert V_t^s\Vert^2\cr
        &\leq& f(\omega_t^s)-\frac{\alpha_t^s}{2}\Vert \text{grad}f(\omega_t^s)\Vert^2+\frac{\alpha_t^s}{2}\Vert V_t^s-\text{grad}f(\omega_t^s)\Vert^2
        \end{eqnarray}
    According to lemma $\mathrm{\ref{L9}}$, for all $1\leq t\leq m-2$, we obtain
    \begin{eqnarray}\label{T5.P.2}
        &&\frac{1}{12(\beta-2)(M^2+\theta^2N^2)}\Big(\frac{\mathbb{E}[\Vert V_{t+1}^s-\text{grad}f(\omega_{t+1}^s)\Vert^2]}{\alpha_{t}^s}- \frac{\mathbb{E}[\Vert V_{t}^s-\text{grad}f(\omega_{t}^s)\Vert^2]}{\alpha_{t-1}^s}\Big)\cr
        &\leq &\frac{1}{12(\beta-2)(M^2+\theta^2N^2)}\Big(\frac{6(M^2+\theta^2N^2)}{\alpha_{t}^s}(\phi_{t+1}^s)^2\rho^2\cr
        &&+12(M^2+\theta^2N^2)\alpha_{t}^s \mathbb{E}[\Vert \text{grad}f(\omega_{t}^s)\Vert^2]+\frac{12(1-\psi_{t+1}^s)^2N^2}{\alpha_{t}^s}\cr
        &&+6(2-\beta)(M^2+\theta^2N^2)\alpha_{t}^s\mathbb{E}[\Vert V_{t}^s-\text{grad}f(\omega_{t}^s)\Vert^2]\Big)\cr
        &=& \frac{\rho^2}{2(\beta-2)}\cdot \frac{(\phi_{t+1}^s)^2}{\alpha_{t}^s}+\frac{\alpha_t^s}{(\beta-2)}\mathbb{E}[\Vert \text{grad}f(\omega_t^s)\Vert^2]-\frac{\alpha_t^s}{2}\mathbb{E}[\Vert V_t^s-\text{grad}f(\omega_t^s)\Vert^2]\cr
        &&+\frac{N^2}{(\beta-2)(M^2+\theta^2N^2)}\cdot \frac{(1-\psi_{t+1}^s)^2}{\alpha_{t}^s}
    \end{eqnarray}
    Taking the mathematical expectation about $\mathrm{(\ref{T5.P.1})}$ and combining $\mathrm{(\ref{T5.P.2})}$,
    for any $1\leq t\leq m-2$, we get
    \begin{eqnarray}\label{T5.P.3}
        &&\frac{1}{12(\beta-2)(M^2+\theta^2N^2)}\Big(\frac{\mathbb{E}[\Vert V_{t+1}^s-\text{grad}f(\omega_{t+1}^s)\Vert^2]}{\alpha_{t}^s}- \frac{\mathbb{E}[\Vert V_{t}^s-\text{grad}f(\omega_{t}^s)\Vert^2]}{\alpha_{t-1}^s}\Big)\cr
        &&+\mathbb{E}[f(\omega_{t+1}^s)-f(\omega_t^s)]\cr
        &\leq&\frac{\rho^2}{2(\beta-2)}\cdot \frac{(\phi_{t+1}^s)^2}{\alpha_{t}^s}+\frac{\alpha_t^s}{(\beta-2)}\mathbb{E}[\Vert \text{grad}f(\omega_t^s)\Vert^2]-\frac{\alpha_t^s}{2}\mathbb{E}[\Vert V_t^s-\text{grad}f(\omega_t^s)\Vert^2]\cr
        &&+\frac{N^2}{(\beta-2)(M^2+\theta^2N^2)}\cdot \frac{(1-\psi_{t+1}^s)^2}{\alpha_{t}^s} -\frac{\alpha_t^s}{2}\mathbb{E}[\Vert \text{grad}f(\omega_t^s)\Vert^2]\cr
        &&+\frac{\alpha_t^s}{2}\mathbb{E}[\Vert V_t^s-\text{grad}f(\omega_t^s)\Vert^2]\cr
        &\leq&\frac{\rho^2}{2(\beta-2)}\cdot \frac{(\phi_{t+1}^s)^2}{\alpha_{t}^s}+\frac{(4-\beta)\alpha_t^s}{2(\beta-2)}\mathbb{E}[\Vert \text{grad}f(\omega_t^s)\Vert^2]\cr
        &&+\frac{N^2}{(\beta-2)(M^2+\theta^2N^2)}\cdot \frac{(1-\psi_{t+1}^s)^2}{\alpha_{t}^s}
    \end{eqnarray}
    We can definite $V_m^s$ to make above formula is hold for $1\leq t\leq m-1$
    (The $V_m^s$ is exists, for example $V_m^s$ give by algorithm 2).
    Let $\eta=\frac{1}{12(\beta-2)(M^2+\theta^2N^2)}$ and for any $1\leq t\leq m-1$, we have
    \begin{eqnarray}\label{T5.P.4}
        &&\mathbb{E}[f(\omega_{t+1}^s)-f(\omega_t^s)]\cr
        &\leq &\eta \Big(\frac{\mathbb{E}[\Vert V_{t}^s-\text{grad}f(\omega_{t}^s)\Vert^2]}{\alpha_{t-1}^s}-\frac{\mathbb{E}[\Vert V_{t+1}^s-\text{grad}f(\omega_{t+1}^s)\Vert^2]}{\alpha_{t}^s}\Big)\cr
        &&+\frac{\rho^2}{2(\beta-2)}\cdot \frac{(\phi_{t+1}^s)^2}{\alpha_{t}^s}+\frac{(4-\beta)\alpha_t^s}{2(\beta-2)}\mathbb{E}[\Vert \text{grad}f(\omega_t^s)\Vert^2]+12\eta N^2\cdot \frac{(1-\psi_{t+1}^s)^2}{\alpha_{t}^s}
    \end{eqnarray}
    Summing this result over $1\leq t\leq m-1$ gives
    \begin{eqnarray}\label{T5.P.5}
        &&\mathbb{E}[f(\omega_{m}^s)-f(\omega_1^s)]\cr
        &\leq &\eta \Big(\frac{\mathbb{E}[\Vert V_{1}^s-\text{grad}f(\omega_{1}^s)\Vert^2]}{\alpha_{0}^s}-\frac{\mathbb{E}[\Vert V_{m}^s-\text{grad}f(\omega_{m}^s)\Vert^2]}{\alpha_{m-1}^s}\Big)\cr
        &&+\sum\limits_{t=1}^{m-1}\Big(\frac{\rho^2}{2(\beta-2)}\cdot \frac{(\phi_{t+1}^s)^2}{\alpha_{t}^s}+\frac{(4-\beta)\alpha_t^s}{2(\beta-2)}\mathbb{E}[\Vert \text{grad}f(\omega_t^s)\Vert^2]+12\eta N^2\cdot \frac{(1-\psi_{t+1}^s)^2}{\alpha_{t}^s}\Big)
    \end{eqnarray}
    Since $\mathrm{(\ref{L9.P.1})}$, $\mathrm{(\ref{T5.P.1})}$ and $(1-\phi_1^s-\psi_1^s)^2\leq (1-\phi_1^s)^2$ yields
    \begin{eqnarray}\label{T5.P.6}
        &&\mathbb{E}[f(\omega_{1}^s)-f(\omega_0^s)]+\eta \frac{\mathbb{E}[\Vert V_{1}^s-\text{grad}f(\omega_{1}^s)\Vert^2]}{\alpha_{0}^s}\cr
        &\leq& -\frac{\alpha_0^s}{2}\mathbb{E}[\Vert \text{grad}f(\omega_0^s)\Vert^2]+\frac{\rho^2}{2(\beta-2)}\cdot \frac{(\phi_{1}^s)^2}{\alpha_{0}^s}+\frac{\alpha_0^s}{(\beta-2)}\mathbb{E}[\Vert \text{grad}f(\omega_0^s)\Vert^2]\cr
        &&+12\eta N^2\cdot \frac{(1-\psi_{1}^s)^2}{\alpha_{0}^s}\cr
        &=&\frac{\rho^2}{2(\beta-2)}\cdot \frac{(\phi_{1}^s)^2}{\alpha_{0}^s}+\frac{(4-\beta)\alpha_0^s}{2(\beta-2)}\mathbb{E}[\Vert \text{grad}f(\omega_0^s)\Vert^2]+12\eta N^2\cdot \frac{(1-\psi_{1}^s)^2}{\alpha_{0}^s}
    \end{eqnarray}
    Combining the above two inequalities, we get
    \begin{eqnarray}\label{T5.P.7}
        &&\mathbb{E}[f(\omega_{m}^s)-f(\omega_0^s)]\cr
        &=&\mathbb{E}[f(\omega_{1}^s)-f(\omega_0^s)]+\mathbb{E}[f(\omega_{m}^s)-f(\omega_1^s)]\cr
        &\leq &\mathbb{E}[f(\omega_{1}^s)-f(\omega_0^s)]+\eta \Big(\frac{\mathbb{E}[\Vert V_{1}^s-\text{grad}f(\omega_{1}^s)\Vert^2]}{\alpha_{0}^s}-\frac{\mathbb{E}[\Vert V_{m}^s-\text{grad}f(\omega_{m}^s)\Vert^2]}{\alpha_{m-1}^s}\Big)\cr
        &&+\sum\limits_{t=1}^{m-1}\Big(\frac{\rho^2}{2(\beta-2)}\cdot \frac{(\phi_{t+1}^s)^2}{\alpha_{t}^s}+\frac{(4-\beta)\alpha_t^s}{2(\beta-2)}\mathbb{E}[\Vert \text{grad}f(\omega_t^s)\Vert^2]+12\eta N^2\cdot \frac{(1-\psi_{t+1}^s)^2}{\alpha_{t}^s}\Big) \cr
        &\leq &\mathbb{E}[f(\omega_{1}^s)-f(\omega_0^s)]+\eta \frac{\mathbb{E}[\Vert V_{1}^s-\text{grad}f(\omega_{1}^s)\Vert^2]}{\alpha_{0}^s}\cr
        &&+\sum\limits_{t=1}^{m-1}\Big(\frac{\rho^2}{2(\beta-2)}\cdot \frac{(\phi_{t+1}^s)^2}{\alpha_{t}^s}+\frac{(4-\beta)\alpha_t^s}{2(\beta-2)}\mathbb{E}[\Vert \text{grad}f(\omega_t^s)\Vert^2]+12\eta N^2\cdot \frac{(1-\psi_{t+1}^s)^2}{\alpha_{t}^s}\Big) \cr
        &\leq&\sum\limits_{t=0}^{m-1}\Big(\frac{\rho^2}{2(\beta-2)}\cdot \frac{(\phi_{t+1}^s)^2}{\alpha_{t}^s}+\frac{(4-\beta)\alpha_t^s}{2(\beta-2)}\mathbb{E}[\Vert \text{grad}f(\omega_t^s)\Vert^2]+12\eta N^2\cdot \frac{(1-\psi_{t+1}^s)^2}{\alpha_{t}^s}\Big) \cr
        &\leq&\sum\limits_{t=0}^{m-1}\Big(\frac{\rho^2}{2(\beta-2)}\cdot \frac{(\phi_{t+1}^s)^2}{\alpha_{t}^s}+\frac{(4-\beta)\alpha_{m-1}^s}{2(\beta-2)}\mathbb{E}[\Vert \text{grad}f(\omega_t^s)\Vert^2]+12\eta N^2\cdot \frac{(1-\psi_{t+1}^s)^2}{\alpha_{t}^s}\Big)
    \end{eqnarray}
    The second inequality uses the fact $\mathbb{E}[\Vert V_m^s- \text{grad}f(\omega_m^s)\Vert^2]\geq 0$.
    The last inequality is due to the reduced step size , i.e., $\alpha_t^s>\alpha_{m-1}^s$.
    Since $\beta>4$ such that $\frac{(4-\beta)}{2(\beta-2)}<0$, we have
    \begin{eqnarray}\label{T5.P.8}
            &&\alpha_{m-1}^s\sum\limits_{t=0}^{m-1}\mathbb{E}[\Vert \text{grad}f(\omega_t^s)\Vert^2]\cr
            &\leq &\frac{2(\beta-2)}{\beta-4}\Big(\mathbb{E}[f(\omega_{0}^s)-f(\omega_m^s)]+\sum\limits_{t=0}^{m-1}\big(\frac{\rho^2}{2(\beta-2)}\cdot \frac{(\phi_{t+1}^s)^2}{\alpha_{t}^s}+12\eta N^2\cdot \frac{(1-\psi_{t+1}^s)^2}{\alpha_{t}^s}\big)\Big)
    \end{eqnarray}
    Using $\alpha_t^s>\alpha_{m-1}^S$ again, then
    \begin{eqnarray}\label{T5.P.9}
        &&\mathbb{E}[\Vert \text{grad}f(\omega_a)\Vert^2]=\frac{1}{mS}\sum\limits_{s=1}^{S}\sum\limits_{t=0}^{m-1}\mathbb{E}[\Vert \text{grad}f(\omega_t^s)\Vert^2]\cr
        &=&\frac{1}{mS\alpha_{m-1}^S}\alpha_{m-1}^S\sum\limits_{s=1}^{S}\sum\limits_{t=0}^{m-1}\mathbb{E}[\Vert \text{grad}f(\omega_t^s)\Vert^2]\cr
        &\leq&\frac{1}{mS\alpha_{m-1}^S}\sum\limits_{s=1}^{S}\alpha_{m-1}^s\sum\limits_{t=0}^{m-1}\mathbb{E}[\Vert \text{grad}f(\omega_t^s)\Vert^2]\cr
        &\leq&\frac{\frac{2(\beta-2)}{\beta-4}\Big(\mathbb{E}[f(\tilde{\omega}^{0})-f(\omega_m^S)]+\sum\limits_{s=1}^{S}\sum\limits_{t=0}^{m-1}\big(\frac{\rho^2}{2(\beta-2)}\cdot \frac{(\phi_{t+1}^s)^2}{\alpha_{t}^s}+12\eta N^2\cdot \frac{(1-\psi_{t+1}^s)^2}{\alpha_{t}^s}\big)\Big)}{mS\alpha_{m-1}^S}
    \end{eqnarray}
    If $x<0$
    \begin{equation}\label{T5.P.10}
        \sum\limits_{s=1}^{S}\sum\limits_{t=0}^{m-1}(t+s+\kappa+2)^x\leq m\sum\limits_{i=\kappa +2}^{m+S+\kappa+1}i^x\leq m \int_{\kappa+1}^{m+S+\kappa+1}i^x
    \end{equation}
    Thus
    \begin{eqnarray}\label{T5.P.11}
        \sum\limits_{s=1}^{S}\sum\limits_{t=0}^{m-1}(t+s+\kappa+2)^x\leq
        \begin{cases}
            mln(m+S+\kappa+1),\qquad &x=-1\cr
            \frac{m}{x+1}(m+S+\kappa+1)^{x+1},\qquad &-1\leq x\leq 0
        \end{cases}
    \end{eqnarray}
    Since $R\geq Q$, we obtain that $\frac{(\phi_{t+1}^s)^2}{\alpha_{t}^s} \leq \frac{(1-\psi_{t+1}^s)^2}{\alpha_{t}^s}=(t+s+\kappa+2)^{P-2Q}$.
    Giving by $\mathrm{(\ref{T5.P.9})}$-$\mathrm{(\ref{T5.P.11})}$, we have
    \begin{equation}\label{T5.P.12}
        \begin{split}
        &\mathbb{E}[\Vert \text{grad}f(\omega_a)\Vert^2]\cr
        \leq &\frac{\frac{2(\beta-2)}{\beta-4}\Big(\mathbb{E}[f(\tilde{\omega}^{0})-f(\omega^*)]+\sum\limits_{s=1}^{S}\sum\limits_{t=0}^{m-1}\Big(\frac{\rho^2C_\phi^2}{2(\beta-2)C_\alpha}+\frac{12\eta N^2C_\psi^2}{C_\alpha}\Big)(t+s+\kappa+2)^{P-2Q}\big)}{mSC_\alpha(m+S+\kappa+1)^{-P}} \cr
        =&\mathcal{O}(\frac{1}{S^{2(Q-P)}})
        \end{split}
    \end{equation}
    Furthermore, by conditions $\max\{\frac{\gamma Q-1}{\gamma-1},\frac{Q}{2}\}\leq P\leq Q$, $0<P,Q<1$, and
    using linear programming knowledge, we know $2(Q-P)$ get maximum at $Q=\frac{2}{\gamma+1}$ and $P=\frac{1}{\gamma+1}$.
     The maximum value is $[2(Q-P)]_{\max}=\frac{2}{\gamma+1}$,
    By the condition $\gamma>1$, the fastest convergence rate of $\mathbb{E}[\Vert \text{grad}f(\omega_a)\Vert^2]$ at least can get $\mathcal{O}(\frac{1}{S^{\frac{2}{\gamma+1}}})\rightarrow \mathcal{O}(\frac{1}{S})$
\end{proof}

Andi Han et al.\cite{AndiH1} consider the problem of expectation (online)
 minimization over Riemannian manifold $\mathcal{M}$. They assumption
 the stochastic gradient is an unbiased estimation, i.e., $\mathbb{E}_{\omega}\text{grad}f(x,\omega)=\text{grad}F(x)$.
  And they get a convergence rate of $\mathcal{O}(\frac{1}{T^{\frac{2}{3}}})$ for the case of the reduced step size.
 In our paper, we do not need this assumption (similar to lemma $\mathrm{\ref{L1}}$)
 and we can get faster convergence. If $\phi_t^s\equiv 1$ and consider the problem is online, our results can be degenerated into \cite{AndiH1} and more faster.

\subsection{Step size is fixed}
In this subsection, we analyze the case of fixed step size.
The reason why we use lemma $\mathrm{\ref{L8}}$ to proof the theorem, we will elaborate in the remark $\mathrm{(\ref{rmk.T6.1})}$.

\begin{theorem}\label{T6}
    Suppose assumption 1.a, assumption 2 and assumption 4-9 hold. The sequences $\{\omega_t^s\}$
    produced by algorithm 2 with $\alpha_t^s\equiv C_\alpha$ and
    $\psi_t^s=\psi^s$, $\phi_t^s= \phi^s$,
    satisfying
    \begin{eqnarray*}\label{T6.1}
        C_\alpha\leq\frac{2}{L+\sqrt{L^2+4\nu}},\quad \nu=6m^2(M^2+\theta^2N^2)(C_1^2C_2^2m^2+1)
    \end{eqnarray*}
    And $\sum\limits_{s=1}^{\infty}(1-\phi^s-\psi^s)^2<\infty$, then
    \begin{equation}\label{T6.2}
        \begin{split}
            \mathbb{E}[\Vert \text{grad}f(\omega_{a})\Vert^2]=&\frac{1}{mS}\sum\limits_{s=1}^S\sum\limits_{t=0}^{m-1}\mathbb{E}[\Vert \text{grad}f(\omega_{t}^s)\Vert^2]\cr
            \leq&\frac{2}{mC_\alpha S}(f(\tilde{\omega}^0)-f(\omega^*))+\frac{12mN^2}{S}\sum\limits_{s=1}^{\infty}(1-\phi^s-\psi^s)^2\cr
            =&\mathcal{O}(\frac{1}{S})
        \end{split}
    \end{equation}
\end{theorem}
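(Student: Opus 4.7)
The plan is to adapt the proof of Theorem \ref{T3} to Algorithm 2, using Lemma \ref{L8} in place of Lemma \ref{L6}. First I would exploit retraction $L$-smoothness (Assumption 7) to obtain
\begin{equation*}
f(\omega_{t+1}^s)-f(\omega_t^s)\leq -C_\alpha\langle V_t^s,\text{grad}f(\omega_t^s)\rangle+\frac{LC_\alpha^2}{2}\|V_t^s\|^2,
\end{equation*}
and then apply the polarization identity $\langle a,b\rangle=\tfrac12(\|a\|^2+\|b\|^2-\|a-b\|^2)$ to rewrite this as
\begin{equation*}
f(\omega_{t+1}^s)\leq f(\omega_t^s)-\tfrac{C_\alpha}{2}\|\text{grad}f(\omega_t^s)\|^2+\tfrac{C_\alpha}{2}\|V_t^s-\text{grad}f(\omega_t^s)\|^2+\bigl(\tfrac{LC_\alpha^2}{2}-\tfrac{C_\alpha}{2}\bigr)\|V_t^s\|^2,
\end{equation*}
exactly as in \eqref{T3.P.1}. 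Taking expectations and rearranging puts $\mathbb{E}[\|\text{grad}f(\omega_t^s)\|^2]$ on the left while leaving us to control $\mathbb{E}[\|V_t^s-\text{grad}f(\omega_t^s)\|^2]$ and a term of the form $(LC_\alpha-1)\mathbb{E}[\|V_t^s\|^2]$.

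Second, I would bound the variance term using Lemma \ref{L8}. Assumption 9 lets me convert the retraction-norm quantities into iterate norms: $\|\xi_{\omega_0^s}^{\omega_t^s}\|^2\leq C_1^2C_2^2 C_\alpha^2 t\sum_{i=0}^{t-1}\|V_i^s\|^2$ (as in \eqref{T3.P.3}) and the one-step analogue $\|\xi_{\omega_{t-1}^s}^{\omega_t^s}\|^2\leq C_2^2 C_\alpha^2\|V_{t-1}^s\|^2$. Substituting these into \eqref{L8.1}, using $0\leq\phi^s,\psi^s\leq 1$ and $t\leq m$, and then unrolling the recursive $(\psi^s)^2\|V_{t-1}^s-\text{grad}f(\omega_{t-1}^s)\|^2$ term down to the base case $V_0^s=\text{grad}f(\omega_0^s)$ (which gives variance zero), yields a bound of the shape
\begin{equation*}
\mathbb{E}[\|V_t^s-\text{grad}f(\omega_t^s)\|^2]\leq 6(M^2+\theta^2N^2)(C_1^2C_2^2 m^2+1)C_\alpha^2\sum_{t=0}^{m-1}\mathbb{E}[\|V_t^s\|^2]+12mN^2(1-\phi^s-\psi^s)^2.
\end{equation*}
The factor $C_1^2C_2^2 m^2+1$ that appears inside the definition of $\nu$ reflects exactly the two sources of iterate-norm dependence (the long-range $\omega_0^s\to\omega_t^s$ term and the one-step $\omega_{t-1}^s\to\omega_t^s$ term).

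Third, I would sum the descent inequality over $t=0,\dots,m-1$ and $s=1,\dots,S$. The coefficient multiplying $\sum_{s,t}\mathbb{E}[\|V_t^s\|^2]$ becomes $\nu C_\alpha^2+LC_\alpha-1$, where $\nu=6m^2(M^2+\theta^2N^2)(C_1^2C_2^2 m^2+1)$; the hypothesis $C_\alpha\leq 2/(L+\sqrt{L^2+4\nu})$ is exactly the root condition that makes this coefficient $\leq 0$, so all $\|V_t^s\|^2$ terms may be discarded. Telescoping the $f$-differences, using $f(\omega_m^S)\geq f(\omega^*)$ and $\sum_{s=1}^\infty(1-\phi^s-\psi^s)^2<\infty$, and finally dividing by $mSC_\alpha$ to pass from the sum to $\mathbb{E}[\|\text{grad}f(\omega_a)\|^2]$ (where $\omega_a$ is drawn uniformly from the iterates) yields the stated bound, and hence the $\mathcal{O}(1/S)$ rate.

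The main obstacle is the second step: the Lemma \ref{L8} variance bound is recursive in $\|V_{t-1}^s-\text{grad}f(\omega_{t-1}^s)\|^2$, and since $\psi^s$ can be close to $1$, one cannot simply apply a geometric contraction. I expect to handle this by iterating the recursion across $t=1,\dots,m-1$, noting that the inner-loop length $m$ is fixed and $V_0^s-\text{grad}f(\omega_0^s)=0$, so after $m$ unrollings the cumulative coefficient on the $\|V_i^s\|^2$ quantities is $\mathcal{O}(m^2)$, which is precisely why the $m^2$-factor enters the definition of $\nu$. The remaining bookkeeping is routine and parallels \eqref{T3.P.4}--\eqref{T3.P.7} essentially verbatim.
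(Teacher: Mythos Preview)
Your proposal is correct and follows essentially the same route as the paper's proof: apply retraction $L$-smoothness as in \eqref{T3.P.1}--\eqref{T3.P.2}, bound the variance via Lemma~\ref{L8} combined with \eqref{T3.P.3}, unroll the inner-loop recursion using $V_0^s=\text{grad}f(\omega_0^s)$, and then choose $C_\alpha$ so that the aggregated coefficient on $\sum_{s,t}\mathbb{E}\|V_t^s\|^2$ is nonpositive. One small correction: for the one-step term you have $\xi_{\omega_{t-1}^s}^{\omega_t^s}=-C_\alpha V_{t-1}^s$ exactly by the update rule, so $\|\xi_{\omega_{t-1}^s}^{\omega_t^s}\|^2=C_\alpha^2\|V_{t-1}^s\|^2$ without any $C_2$ factor---this is precisely how the ``$+1$'' (rather than ``$+C_2^2$'') arises in $\nu$.
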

\begin{remark}\label{rmk.T6.1}
    There exists parameters $\phi^s,\psi^s$ satisfy the conditios. For example $\phi^s+\psi^s=1-\frac{1}{s+1}$. 
    We can analyze the convergence similar to theorem $\mathrm{\ref{T3}}$. The result is similar, but the condition is
    $\sum\limits_{s=1}^{\infty}(1-\phi^s)^2+(\psi^s)^2<\infty$. However, if $\phi^s=\psi^s=\frac{1}{2}(1-\frac{1}{s+1})$,
    we can easily find that the parameters satisfy$\sum\limits_{s=1}^{\infty}(1-\phi^s-\psi^s)^2<\infty$
    and not satisfy $\sum\limits_{s=1}^{\infty}(1-\phi^s)^2+(\psi^s)^2<\infty$. Therefore, we will
    use lemma $\mathrm{\ref{L8}}$. The advantage is that we use weaker conditions to obtain similar conclusions.
\end{remark}

\begin{proof}
    According to lemma $\mathrm{\ref{L8}}$, taking the parameters, step size and $\mathrm{(\ref{T3.P.3})}$
    into $\mathrm{(\ref{L8.1})}$, we have
    \begin{eqnarray}\label{T6.P.1}
        &&\mathbb{E}[\Vert V_t^s-\text{grad}f(\omega_t^s)\Vert^2]\cr
        &\leq&12(1-\phi^s-\psi^s)^2N^2+(\psi^s)^2\mathbb{E}[\Vert V_{t-1}^s-\text{grad}f(\omega_{t-1}^s)\Vert^2]\cr
        &&+ 6C_\alpha^2(M^2+\theta^2N^2)\Big(C_1^2C_2^2(\phi^s)^2t\sum\limits_{i=0}^{t-1} \mathbb{E}[\Vert V_i^s\Vert^2]+(\psi^s)^2\mathbb{E}[\Vert V_{t-1}^s\Vert^2]\Big)\cr
        &\leq&12(1-\phi^s-\psi^s)^2N^2+\mathbb{E}[\Vert V_{t-1}^s-\text{grad}f(\omega_{t-1}^s)\Vert^2]\cr
        &&+ 6C_\alpha^2(M^2+\theta^2N^2)\Big(C_1^2C_2^2(\phi^s)^2m\sum\limits_{t=0}^{m-1} \mathbb{E}[\Vert V_t^s\Vert^2]+(\psi^s)^2 \mathbb{E}[\Vert V_t^s\Vert^2]\Big)\cr
        &\leq&12t(1-\phi^s-\psi^s)^2N^2+\mathbb{E}[\Vert V_0^s-\text{grad}f(\omega_0^s)\Vert^2]\cr
        &&+ 6C_\alpha^2(M^2+\theta^2N^2)\Big(C_1^2C_2^2(\phi^s)^2mt\sum\limits_{t=0}^{m-1} \mathbb{E}[\Vert V_t^s\Vert^2]+(\psi^s)^2 \sum\limits_{i=0}^{t-1} \mathbb{E}[\Vert V_i^s\Vert^2]\Big)\cr
        &\leq&12m(1-\phi^s-\psi^s)^2N^2+\mathbb{E}[\Vert V_0^s-\text{grad}f(\omega_0^s)\Vert^2]\cr
        &&+6C_\alpha^2(M^2+\theta^2N^2)\Big(C_1^2C_2^2(\phi^s)^2m^2\sum\limits_{t=0}^{m-1} \mathbb{E}[\Vert V_t^s\Vert^2]+(\psi^s)^2\sum\limits_{t=0}^{m-1} \mathbb{E}[\Vert V_t^s\Vert^2]\Big)\cr
        &=&6C_\alpha^2(M^2+\theta^2N^2)\Big(C_1^2C_2^2(\phi^s)^2m^2+(\psi^s)^2)\sum\limits_{t=0}^{m-1} \mathbb{E}[\Vert V_t^s\Vert^2]\Big)\cr
        &&+12m(1-\phi^s-\psi^s)^2N^2
    \end{eqnarray}
    The above inequality applies $V_0^s=\text{grad}f(\omega_0^s)$ and $t\leq m$. Summing this result over $t=0,...,m-1$ and  $ s=1,...,S$ gives
    \begin{eqnarray}\label{T6.P.2}
        &&\sum\limits_{s=1}^S\sum\limits_{t=0}^{m-1}\mathbb{E}[\Vert V_t^s-\text{grad}f(\omega_t^s)\Vert^2]\cr
        &\leq&\sum\limits_{s=1}^S\Big[6mC_\alpha^2(M^2+\theta^2N^2)\Big(C_1^2C_2^2(\phi^s)^2m^2+(\psi^s)^2\Big)\sum\limits_{t=0}^{m-1} \mathbb{E}[\Vert V_t^s\Vert^2\cr
        &&+12N^2m^2(1-\phi^s-\psi^s)^2]\Big]\cr
        &\leq&\sum\limits_{s=1}^S\Big[6mC_\alpha^2(M^2+\theta^2N^2)\Big(C_1^2C_2^2m^2+1\Big)\sum\limits_{t=0}^{m-1} \mathbb{E}[\Vert V_t^s\Vert^2+12N^2m^2(1-\phi^s-\psi^s)^2]\Big]\cr
        &\leq&6mC_\alpha^2(M^2+\theta^2N^2)\Big(C_1^2C_2^2m^2+1\Big)\sum\limits_{s=1}^S\sum\limits_{t=0}^{m-1} \mathbb{E}[\Vert V_t^s\Vert^2]\cr
        &&+12N^2m^2(1-\phi^s-\psi^s)^2\sum\limits_{s=1}^{\infty}(1-\phi^s-\psi^s)^2+
    \end{eqnarray}
    By the condition in assumption 8, summing with $\mathrm{(\ref{T3.P.2})}$, we can easily verify that
    \begin{eqnarray}\label{T6.P.3}
        &&\sum\limits_{s=1}^S\sum\limits_{t=0}^{m-1}\mathbb{E}[\Vert \text{grad}f(\omega_{t}^s)\Vert^2] \cr
        &\leq& \frac{2}{C_\alpha}\mathbb{E}[f(\tilde{\omega}^0)-f(\omega_m^s)]+\sum\limits_{s=1}^S\sum\limits_{t=0}^{m-1}\mathbb{E}[\Vert V_t^s-\text{grad}f(\omega_{t}^s)\Vert^2]+(LC_\alpha-1)\sum\limits_{s=1}^S\sum\limits_{t=0}^{m-1}\mathbb{E}[\Vert V_t^s\Vert^2]\cr
        &\leq& \frac{2}{C_\alpha}\mathbb{E}[f(\tilde{\omega}^0)-f(\omega_m^s)]+12N^2m^2\sum\limits_{s=1}^{\infty}(1-\phi^s-\psi^s)^2\cr
        &&+\Big[6mC_\alpha^2(M^2+\theta^2N^2)\Big(C_1^2C_2^2m^2+1\Big)+LC_\alpha-1\Big]\sum\limits_{s=1}^S\sum\limits_{t=0}^{m-1} \mathbb{E}[\Vert V_t^s\Vert^2]\cr
        &\leq& \frac{2}{C_\alpha}\mathbb{E}[f(\tilde{\omega}^0)-f(\omega_m^s)]+12N^2m^2\sum\limits_{s=1}^{\infty}(1-\phi^s-\psi^s)^2\cr
        &\leq& \frac{2}{C_\alpha}(f(\tilde{\omega}^0)-f(\omega^*))+12N^2m^2\sum\limits_{s=1}^{\infty}(1-\phi^s-\psi^s)^2
    \end{eqnarray}
    The third inequality is based the fact that $6mC_\alpha^2(M^2+\theta^2N^2)\Big(C_1^2C_2^2m^2+1\Big)+LC_\alpha-1\leq 0$, if $C_\alpha\leq \frac{-L+\sqrt{L^2+4\nu}}{2\nu}=\frac{2}{L+\sqrt{L^2+4\nu}}$.
    Thus
    \begin{eqnarray}\label{T6.P.4}
        \mathbb{E}[\Vert \text{grad}f(\omega_{a})\Vert^2]&=&\frac{1}{mS}\sum\limits_{s=1}^S\sum\limits_{t=0}^{m-1}\mathbb{E}[\Vert \text{grad}f(\omega_{t}^s)\Vert^2]\cr
        &\leq&\frac{2}{mC_\alpha S}(f(\tilde{\omega}^0)-f(\omega^*))+\frac{12mN^2}{S}\sum\limits_{s=1}^{\infty}(1-\phi^s-\psi^s)^2\cr
        &=&\mathcal{O}(\frac{1}{S})
    \end{eqnarray}
\end{proof}

\subsection{Special case}
We now turn to a special case of problem (P) with $\tau-$gradient dominated function.
As an important class of non-convex function, we can establish linear convergence
for this non-convex functions. Here, we only consider this special case, 
and other special cases are similar to above section, so we will not considere in this subsection.

\begin{theorem}
    Suppose the conditions in theorem $\mathrm{\ref{T6}}$ are hold.
    If $\psi^s+\phi^s=1$, $S=\lceil\frac{2\tau \gamma}{mC_\alpha}\rceil$, $\gamma>1$ and $\tilde{\omega}^{k+1}=\text{Alg2}(\tilde{\omega}^{k},\tilde{\omega}^{0},m,S,\psi^s,\phi^s)$,
    $0\leq k\leq K-1$, the function $f$ is a $\tau$-gradient dominated functions. Then
    \begin{equation}
        \begin{split}
            \mathbb{E}[\Vert \text{grad}f(\tilde{\omega}^{K})\Vert^2] &\leq \gamma^{-K}\mathbb{E}[\Vert \text{grad}f(\tilde{\omega}^{0})\Vert^2]\cr\
            \mathbb{E}[(f(\tilde{\omega}^K)-f(\omega^*))]&\leq \gamma^{-K}\mathbb{E}[(f(\tilde{\omega}^0)-f(\omega^*))]
        \end{split}
    \end{equation}
    Furthermore, we obtain $\lim\limits_{K\rightarrow \infty}\mathbb{E}[\Vert\text{grad}f(\tilde{\omega}^{K})\Vert]=0$ and $\lim\limits_{K\rightarrow \infty} \mathbb{E}[(f(\tilde{\omega}^K)]=f(\omega^*))$.
\end{theorem}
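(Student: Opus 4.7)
The plan is to apply Theorem 6 recursively over the outer loop indexed by $k$. First I would observe that, since $\psi^s+\phi^s=1$, the term $\sum_{s=1}^{\infty}(1-\phi^s-\psi^s)^2$ in the bound of Theorem 6 vanishes identically, so that a single call $\tilde{\omega}^{k+1}=\text{Alg2}(\tilde{\omega}^k,\tilde{\omega}^0,m,S,\psi^s,\phi^s)$ satisfies
\begin{equation*}
\mathbb{E}\!\left[\Vert\text{grad}f(\tilde{\omega}^{k+1})\Vert^2 \,\big|\, \tilde{\omega}^k\right] \;\leq\; \frac{2}{mC_\alpha S}\bigl(f(\tilde{\omega}^k)-f(\omega^*)\bigr).
\end{equation*}
Here I am using the tower property together with the fact that inside a single outer iteration, the Alg2 analysis in Theorem 6 only sees the input $\tilde{\omega}^k$ as its initial point.

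Next I would invoke the $\tau$-gradient dominated property $f(\tilde{\omega}^k)-f(\omega^*)\leq \tau\Vert\text{grad}f(\tilde{\omega}^k)\Vert^2$ to rewrite the right-hand side purely in gradient form, and exploit the definition $S=\lceil 2\tau\gamma/(mC_\alpha)\rceil$ so that $\tfrac{2\tau}{mC_\alpha S}\leq \gamma^{-1}$. This yields the one-step contraction
\begin{equation*}
\mathbb{E}\!\left[\Vert\text{grad}f(\tilde{\omega}^{k+1})\Vert^2 \,\big|\, \tilde{\omega}^k\right] \;\leq\; \gamma^{-1}\Vert\text{grad}f(\tilde{\omega}^k)\Vert^2.
\end{equation*}
Taking total expectation and iterating over $k=0,1,\dots,K-1$ gives the first claimed bound $\mathbb{E}[\Vert\text{grad}f(\tilde{\omega}^K)\Vert^2]\leq \gamma^{-K}\mathbb{E}[\Vert\text{grad}f(\tilde{\omega}^0)\Vert^2]$. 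For the function-value bound, I would start from the same conditional estimate but apply $\tau$-gradient dominance on the left rather than on the right, i.e. $f(\tilde{\omega}^{k+1})-f(\omega^*)\leq \tau\Vert\text{grad}f(\tilde{\omega}^{k+1})\Vert^2$, to obtain
\begin{equation*}
\mathbb{E}\!\left[f(\tilde{\omega}^{k+1})-f(\omega^*)\,\big|\,\tilde{\omega}^k\right]\;\leq\;\frac{2\tau}{mC_\alpha S}\bigl(f(\tilde{\omega}^k)-f(\omega^*)\bigr)\;\leq\;\gamma^{-1}\bigl(f(\tilde{\omega}^k)-f(\omega^*)\bigr),
\end{equation*}
and iterate as before.

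Finally, the two limits follow directly from $\gamma>1$: $\gamma^{-K}\to 0$ as $K\to\infty$ gives $\mathbb{E}[f(\tilde{\omega}^K)]\to f(\omega^*)$, and by Jensen's inequality $\mathbb{E}[\Vert\text{grad}f(\tilde{\omega}^K)\Vert]\leq \sqrt{\mathbb{E}[\Vert\text{grad}f(\tilde{\omega}^K)\Vert^2]}\leq \gamma^{-K/2}\sqrt{\mathbb{E}[\Vert\text{grad}f(\tilde{\omega}^0)\Vert^2]}\to 0$. I expect no serious obstacle here; the only delicate point is to justify that Theorem 6 applies uniformly in the outer index $k$, which requires verifying that all the structural hypotheses (Assumptions 1.a, 2, 4--9, the step-size cap, and $\sum(1-\phi^s-\psi^s)^2<\infty$) remain valid when Alg2 is restarted from $\tilde{\omega}^k$ rather than from $\tilde{\omega}^0$. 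Since these hypotheses are pointwise/uniform properties of $f$ and the retraction on $\Omega$, and since Assumption 2 ensures the iterates stay in $\Omega$ across restarts, the application of Theorem 6 carries through at each stage and the recursion closes.
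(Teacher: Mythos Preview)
Your proposal is correct and follows essentially the same route as the paper: apply Theorem~\ref{T6} with $\psi^s+\phi^s=1$ so the residual sum vanishes, combine the resulting bound with $\tau$-gradient dominance and the choice $S=\lceil 2\tau\gamma/(mC_\alpha)\rceil$ to obtain a one-step contraction by $\gamma^{-1}$, and then iterate. Your version is in fact slightly more careful than the paper's in making the conditional/tower structure explicit, invoking Jensen's inequality for the non-squared gradient norm, and checking that the hypotheses of Theorem~\ref{T6} persist across restarts.
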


\begin{proof}
     By the condition $S=\lceil\frac{2\tau \gamma}{mC_\alpha}\rceil$, we have $\frac{2\tau}{mC_\alpha S}\leq \frac{1}{\gamma}$.
    From theorem $\mathrm{\ref{T6}}$, for any  $1\leq k\leq K-1$, we have
    \begin{equation}
        \begin{split}
            &\mathbb{E}[\Vert \text{grad}f(\tilde{\omega}^{k+1})\Vert^2]
            \leq \frac{2}{mC_\alpha S}\mathbb{E}[(f(\tilde{\omega}^k)-f(\omega^*))]\cr
             \leq &\frac{2\tau}{mC_\alpha S}\mathbb{E}[\Vert \text{grad}f(\tilde{\omega}^{k})\Vert^2]\leq \frac{1}{\gamma}\mathbb{E}[\Vert \text{grad}f(\tilde{\omega}^{k})\Vert^2]\cr
             \leq &\frac{1}{\gamma}\cdot \frac{2}{mC_\alpha S}\mathbb{E}[(f(\tilde{\omega}^{k-1})-f(\omega^*))]\cr
        \end{split}
    \end{equation}
    The second inequality is due to $f$ is a $\tau$-gradient dominated functions.
    Iterate on both sides of the inequality, we get
    \begin{equation}
        \begin{split}
            &\mathbb{E}[\Vert \text{grad}f(\tilde{\omega}^{k+1})\Vert^2]\leq \frac{1}{\gamma}\mathbb{E}[\Vert \text{grad}f(\tilde{\omega}^{k})\Vert^2] \leq \cdots \leq \frac{1}{\gamma^{k+1}}\mathbb{E}[\Vert \text{grad}f(\tilde{\omega}^{0})\Vert^2]\cr\
            &\mathbb{E}[(f(\tilde{\omega}^k)-f(\omega^*))]\leq \frac{1}{\gamma}\mathbb{E}[(f(\tilde{\omega}^{k-1})-f(\omega^*))]\leq \cdots \leq \frac{1}{\gamma^{k}}\mathbb{E}[(f(\tilde{\omega}^0)-f(\omega^*))]
        \end{split}
    \end{equation}
\end{proof}

\section{Conclusions}
This paper proposes R-SHG algorithm with adaptive parameters and time-varying parameters by the linear combination of R-SRG, R-SVRG and R-SGD. We have studied the finite-sum optimization problems on a smooth Riemannian manifold $\mathcal{M}$. Two R-SHG algorithms with two different step sizes have been considered. Compared to the existing literature, our model is more widely applicable in the sense that 1) we do not need the descent direction to be an unbiased estimate; 2) our analysis focuses on retraction mapping and vector transport, do not need exponential mapping or vector transport. At the algorithm of R-SHG with adaptive parameters and time-varying parameters, we get global convergence when the step size is reduced and quantitatively research the convergence when the step size is fixed. For some special cases, we give better results. In this paper, there is no special requirement for function $f$. Next, we will research whether the function satisfying certain conditions can have better properties and consider adaptive batch size gradient of a reference point.

\begin{appendices}

\noindent
\section{Proofs of lemmas in section 3 and section 4}\label{secA}
\textbf{Proof of Lemma $\mathrm{\ref{L0}}$}
\begin{proof}
The inequalities are discussed in two cases\\
If $\langle \mathcal{T}_{\omega_{t-1}^s}^{\omega_t^s} \big(V_{t-1}^s-\text{grad}f(\omega_{t-1}^s)\big),\text{grad}f(\omega_t^s)\rangle>0$, then
\begin{eqnarray}
    &0\geq& \tilde{\psi}_t^s\langle \mathcal{T}_{\omega_{t-1}^s}^{\omega_t^s} \big(V_{t-1}^s-\text{grad}f(\omega_{t-1}^s)\big),\text{grad}f(\omega_t^s)\rangle\cr
    &=& \min\{\psi_t^s,\frac{\mu\Vert \text{grad}f(\omega_t^s)\Vert^2}{\vert\langle \mathcal{T}_{\omega_{t-1}^s}^{\omega_t^s} (V_{t-1}^s-\text{grad}f(\omega_{t-1}^s)),\text{grad}f(\omega_t^s)\rangle\vert}\}\cr
    &&\times \langle \mathcal{T}_{\omega_{t-1}^s}^{\omega_t^s} \big(V_{t-1}^s-\text{grad}f(\omega_{t-1}^s)\big),\text{grad}f(\omega_t^s)\rangle\cr
    &\leq &\mu\Vert \text{grad}f(\omega_t^s)\Vert^2
\end{eqnarray}
If $\langle \mathcal{T}_{\omega_{t-1}^s}^{\omega_t^s} \big(V_{t-1}^s-\text{grad}f(\omega_{t-1}^s)\big),\text{grad}f(\omega_t^s)\rangle<0$, then
\begin{eqnarray}
    &0\leq& \tilde{\psi}_t^s\langle \mathcal{T}_{\omega_{t-1}^s}^{\omega_t^s} \big(V_{t-1}^s-\text{grad}f(\omega_{t-1}^s)\big),\text{grad}f(\omega_t^s)\rangle\cr
    &=& \min\{\psi_t^s,\frac{\mu\Vert \text{grad}f(\omega_t^s)\Vert^2}{\vert\langle \mathcal{T}_{\omega_{t-1}^s}^{\omega_t^s} (V_{t-1}^s-\text{grad}f(\omega_{t-1}^s)),\text{grad}f(\omega_t^s)\rangle\vert}\}\cr
    &&\times \langle \mathcal{T}_{\omega_{t-1}^s}^{\omega_t^s} \big(V_{t-1}^s-\text{grad}f(\omega_{t-1}^s)\big),\text{grad}f(\omega_t^s)\rangle\cr
    &=& -\min\{\psi_t^s,\frac{\mu\Vert \text{grad}f(\omega_t^s)\Vert^2}{\vert\langle \mathcal{T}_{\omega_{t-1}^s}^{\omega_t^s} (V_{t-1}^s-\text{grad}f(\omega_{t-1}^s)),\text{grad}f(\omega_t^s)\rangle\vert}\}\cr
    &&\times -\Big(\langle \mathcal{T}_{\omega_{t-1}^s}^{\omega_t^s} \big(V_{t-1}^s-\text{grad}f(\omega_{t-1}^s)\big),\text{grad}f(\omega_t^s)\rangle\Big)\cr
    &=& \max\{-\psi_t^s,-\frac{\mu\Vert \text{grad}f(\omega_t^s)\Vert^2}{\vert\langle \mathcal{T}_{\omega_{t-1}^s}^{\omega_t^s} (V_{t-1}^s-\text{grad}f(\omega_{t-1}^s)),\text{grad}f(\omega_t^s)\rangle\vert}\}\cr
    & &\times \Big(-\langle \mathcal{T}_{\omega_{t-1}^s}^{\omega_t^s} \big(V_{t-1}^s-\text{grad}f(\omega_{t-1}^s)\big),\text{grad}f(\omega_t^s)\rangle\Big)\cr
    &=& \max\{\psi_t^s\times \langle \mathcal{T}_{\omega_{t-1}^s}^{\omega_t^s} \big(V_{t-1}^s-\text{grad}f(\omega_{t-1}^s),\text{grad}f(\omega_t^s)\rangle\Big),-\mu\Vert \text{grad}f(\omega_t^s)\Vert^2\}\cr
    &\geq &-\mu\Vert \text{grad}f(\omega_t^s)\Vert^2
\end{eqnarray}
\end{proof}

\noindent
\textbf{Proof of Lemma $\mathrm{\ref{L5}}$}
\begin{proof}
    \begin{eqnarray} \label{L5.P.1}
    &&\mathbb{E}[\Vert \text{grad}f_{I_t^s}(\omega_2)-\mathcal{T}_{\omega_1}^{\omega_2} \text{grad}f_{I_t^s}(\omega_1)\Vert^2\vert \mathcal{F}_t^s]\cr
    & =&\mathbb{E}[\Vert\frac{1}{b}\sum_{i\in I_t^s} \text{grad}f_i(\omega_2)-\mathcal{T}_{\omega_1}^{\omega_2} \text{grad}f_i(\omega_1)\Vert^2\vert \mathcal{F}_t^s]\cr
    &\leq &\frac{1}{b}\sum_{i\in I_t^s}\mathbb{E}[\Vert \text{grad}f_i(\omega_2)-\mathcal{T}_{\omega_1}^{\omega_2} \text{grad}f_i(\omega_1)\Vert^2\vert \mathcal{F}_t^s]\cr
    &= &\frac{1}{n}\sum_{i=1}^n\mathbb{E}[\Vert \text{grad}f_i(\omega_2)-\mathcal{T}_{\omega_1}^{\omega_2} \text{grad}f_i(\omega_1)\Vert^2\vert \mathcal{F}_t^s]\cr
    &\leq &\frac{2}{n}\sum_{i=1}^n(\Vert \text{grad}f_i(\omega_2)-\Gamma_{\omega_1}^{\omega_2} \text{grad}f_i(\omega_1)\Vert^2+\Vert\Gamma_{\omega_1}^{\omega_2} \text{grad}f_i(\omega_1)-\mathcal{T}_{\omega_1}^{\omega_2} \text{grad}f_i(\omega_1)\Vert^2)\cr
    &\leq &\frac{2}{n}\sum_{i=1}^n(M^2\Vert \xi\Vert^2+\theta^2\Vert \text{grad}f_i(\omega_1)\Vert^2\Vert \xi\Vert^2)\cr
    &\leq &\frac{2}{n}\sum_{i=1}^n(M^2+\theta^2N^2)\Vert \xi_{\omega_1}^{\omega_2}\Vert^2\cr
    &=& 2(M^2+\theta^2N^2)\Vert \xi_{\omega_1}^{\omega_2}\Vert^2
    \end{eqnarray}
\end{proof}

\noindent
\textbf{Proof of Lemma $\mathrm{\ref{L6}}$}
\begin{proof}
    By the definition of $V_t^s$, we have
    \begin{eqnarray}\label{L6.P.1}
        &&\mathbb{E}[\Vert V_t^s-\text{grad}f(\omega_t^s)\Vert^2\vert \mathcal{F}_t^s]\cr
        &=&\mathbb{E}[\Vert \phi_t^s\Big(\text{grad}f_{I_t^s}(\omega_t^s)-\mathcal{T}_{\omega_0^s}^{\omega_t^s} \big(\text{grad}f_{I_t^s}(\omega_0^s)-\text{grad}f(\omega_0^s)\big) \Big)\cr
        &&+\tilde{\psi}_t^s\Big(\text{grad}f_{I_t^s}(\omega_t^s)-\mathcal{T}_{\omega_{t-1}^s}^{\omega_t^s} \big(\text{grad}f_{I_t^s}(\omega_{t-1}^s)-V_{t-1}^s\big) \Big)\cr
        &&+(1-\phi_t^s-\tilde{\psi}_t^s)\text{grad}f_{I_t^s}(\omega_t^s)-\text{grad}f(\omega_t^s)\Vert^2\vert \mathcal{F}_t^s]\cr
        &=&\mathbb{E}[\Vert\phi_t^s\Big(\text{grad}f_{I_t^s}(\omega_t^s)-\mathcal{T}_{\omega_0^s}^{\omega_t^s} \big(\text{grad}f_{I_t^s}(\omega_0^s)-\text{grad}f(\omega_0^s)\big)-\text{grad}f(\omega_t^s)\Big)\cr
        &&+(1-\phi_t^s)\text{grad}f_{I_t^s}(\omega_t^s)-\tilde{\psi}_t^s\mathcal{T}_{\omega_{t-1}^s}^{\omega_t^s} \big(\text{grad}f_{I_t^s}(\omega_{t-1}^s)-V_{t-1}^s\big)-(1-\phi_t^s)\text{grad}f(\omega_t^s)\Vert^2\vert \mathcal{F}_t^s]\cr
        &=&\mathbb{E}[\Vert\phi_t^s\Big(\text{grad}f_{I_t^s}(\omega_t^s)-\mathcal{T}_{\omega_0^s}^{\omega_t^s} \big(\text{grad}f_{I_t^s}(\omega_0^s)-\text{grad}f(\omega_0^s)\big)-\text{grad}f(\omega_t^s)\Big)\cr
        &&+(1-\phi_t^s)\text{grad}f_{I_t^s}(\omega_t^s)-\tilde{\psi}_t^s\mathcal{T}_{\omega_{t-1}^s}^{\omega_t^s}\text{grad}f_{I_t^s}(\omega_{t-1}^s)\cr
        &&-((1-\phi_t^s)\text{grad}f(\omega_t^s)-\tilde{\psi}_t^s\mathcal{T}_{\omega_{t-1}^s}^{\omega_t^s}\text{grad}f(\omega_{t-1}^s))\cr
        &&+\tilde{\psi}_t^s\mathcal{T}_{\omega_{t-1}^s}^{\omega_t^s}V_{t-1}^s-\tilde{\psi}_t^s\mathcal{T}_{\omega_{t-1}^s}^{\omega_t^s}\text{grad}f(\omega_{t-1}^s)\Vert^2\vert \mathcal{F}_t^s]\cr
        &=&\mathbb{E}[\Vert\phi_t^s\Big(\text{grad}f_{I_t^s}(\omega_t^s)-\mathcal{T}_{\omega_0^s}^{\omega_t^s} \big(\text{grad}f_{I_t^s}(\omega_0^s)-\text{grad}f(\omega_0^s)\big)-\text{grad}f(\omega_t^s)\Big)\cr
        &&+(1-\phi_t^s)\text{grad}f_{I_t^s}(\omega_t^s)-\tilde{\psi}_t^s\mathcal{T}_{\omega_{t-1}^s}^{\omega_t^s}\text{grad}f_{I_t^s}(\omega_{t-1}^s)\cr
        &&-((1-\phi_t^s)\text{grad}f(\omega_t^s)-\tilde{\psi}_t^s\mathcal{T}_{\omega_{t-1}^s}^{\omega_t^s}\text{grad}f(\omega_{t-1}^s))\Vert^2\vert \mathcal{F}_t^s]\cr
        &&+\mathbb{E}[\Vert\tilde{\psi}_t^s\mathcal{T}_{\omega_{t-1}^s}^{\omega_t^s}V_{t-1}^s-\tilde{\psi}_t^s\mathcal{T}_{\omega_{t-1}^s}^{\omega_t^s}\text{grad}f(\omega_{t-1}^s)\Vert^2\vert \mathcal{F}_t^s]\cr
        &&+\mathbb{E}[\big\langle\phi_t^s\Big(\text{grad}f_{I_t^s}(\omega_t^s)-\mathcal{T}_{\omega_0^s}^{\omega_t^s} \big(\text{grad}f_{I_t^s}(\omega_0^s)-\text{grad}f(\omega_0^s)\big)-\text{grad}f(\omega_t^s)\Big)\cr
        &&+(1-\phi_t^s)\text{grad}f_{I_t^s}(\omega_t^s)-\tilde{\psi}_t^s\mathcal{T}_{\omega_{t-1}^s}^{\omega_t^s}\text{grad}f_{I_t^s}(\omega_{t-1}^s)\cr
        &&-((1-\phi_t^s)\text{grad}f(\omega_t^s)-\tilde{\psi}_t^s\mathcal{T}_{\omega_{t-1}^s}^{\omega_t^s}\text{grad}f(\omega_{t-1}^s)),\cr
        &&\tilde{\psi}_t^s\mathcal{T}_{\omega_{t-1}^s}^{\omega_t^s}V_{t-1}^s-\tilde{\psi}_t^s\mathcal{T}_{\omega_{t-1}^s}^{\omega_t^s}\text{grad}f(\omega_{t-1}^s)\big\rangle\vert \mathcal{F}_t^s]\cr
        &=&\mathbb{E}[\Vert\phi_t^s\Big(\text{grad}f_{I_t^s}(\omega_t^s)-\mathcal{T}_{\omega_0^s}^{\omega_t^s} \big(\text{grad}f_{I_t^s}(\omega_0^s)-\text{grad}f(\omega_0^s)\big)-\text{grad}f(\omega_t^s)\Big)\cr
        &&+(1-\phi_t^s)\text{grad}f_{I_t^s}(\omega_t^s)-\tilde{\psi}_t^s\mathcal{T}_{\omega_{t-1}^s}^{\omega_t^s}\text{grad}f_{I_t^s}(\omega_{t-1}^s)\cr
        &&-((1-\phi_t^s)\text{grad}f(\omega_t^s)-\tilde{\psi}_t^s\mathcal{T}_{\omega_{t-1}^s}^{\omega_t^s}\text{grad}f(\omega_{t-1}^s))\Vert^2\vert \mathcal{F}_t^s]\cr
        &&+\mathbb{E}[\Vert\tilde{\psi}_t^s\mathcal{T}_{\omega_{t-1}^s}^{\omega_t^s}V_{t-1}^s-\tilde{\psi}_t^s\mathcal{T}_{\omega_{t-1}^s}^{\omega_t^s}\text{grad}f(\omega_{t-1}^s)\Vert^2\vert \mathcal{F}_t^s]\cr
        &\leq&2(\phi_t^s)^2\mathbb{E}[\Vert\text{grad}f_{I_t^s}(\omega_t^s)-\mathcal{T}_{\omega_0^s}^{\omega_t^s} \big(\text{grad}f_{I_t^s}(\omega_0^s)-\text{grad}f(\omega_0^s)\big)-\text{grad}f(\omega_t^s)\vert \mathcal{F}_t^s]\cr
        &&+2\mathbb{E}[\Vert(1-\phi_t^s)\text{grad}f_{I_t^s}(\omega_t^s)-\tilde{\psi}_t^s\mathcal{T}_{\omega_{t-1}^s}^{\omega_t^s}\text{grad}f_{I_t^s}(\omega_{t-1}^s)\cr
        &&-((1-\phi_t^s)\text{grad}f(\omega_t^s)-\tilde{\psi}_t^s\mathcal{T}_{\omega_{t-1}^s}^{\omega_t^s}\text{grad}f(\omega_{t-1}^s))\vert \mathcal{F}_t^s]\cr
        &&+\mathbb{E}[\Vert\tilde{\psi}_t^s\mathcal{T}_{\omega_{t-1}^s}^{\omega_t^s}V_{t-1}^s-\tilde{\psi}_t^s\mathcal{T}_{\omega_{t-1}^s}^{\omega_t^s}\text{grad}f(\omega_{t-1}^s)\Vert^2\vert \mathcal{F}_t^s]
    \end{eqnarray}
    The fifth inequality is due to $\mathbb{E}[(1-\psi_t^s)\text{grad}f_{I_t^s}(\omega_t^s)-\phi_t^s\mathcal{T}_{\omega_0^s}^{\omega_t^s} \big(\text{grad}f_{I_t^s}(\omega_0^s)-\text{grad}f(\omega_0^s)\big)-(1-\psi_t^s)\text{grad}f(\omega_t^s)
    +\psi_t^s\Big(\text{grad}f_{I_t^s}(\omega_t^s)-\mathcal{T}_{\omega_{t-1}^s}^{\omega_t^s}\text{grad}f_{I_t^s}(\omega_{t-1}^s)-(\text{grad}f(\omega_t^s)-\mathcal{T}_{\omega_{t-1}^s}^{\omega_t^s}\text{grad}f(\omega_{t-1}^s))\Big)\vert \mathcal{F}_t^s]=0$
    and $\mathcal{T}_{\omega_{t-1}^s}^{\omega_t^s}V_{t-1}^s-\mathcal{T}_{\omega_{t-1}^s}^{\omega_t^s}\text{grad}f(\omega_{t-1}^s)$ is measurable in $\mathcal{F}_t^s$.
    The firstly inequality applies $(a+b)^2\leq 2(a^2+b^2)$.
    Now, we consider the each term on the right side of $\mathrm{(\ref{L6.P.1})}$.
    For the first item at the right side of $\mathrm{(\ref{L6.P.1})}$
    \begin{eqnarray}\label{L6.P.2}
        &&\mathbb{E}[\Vert\text{grad}f_{I_t^s}(\omega_t^s)-\mathcal{T}_{\omega_0^s}^{\omega_t^s} \big(\text{grad}f_{I_t^s}(\omega_0^s)-\text{grad}f(\omega_0^s)\big)-\text{grad}f(\omega_t^s)\Vert^2\vert \mathcal{F}_t^s]\cr
        &= &\mathbb{E}[\Vert\text{grad}f_{I_t^s}(\omega_t^s)-\mathcal{T}_{\omega_0^s}^{\omega_t^s}\text{grad}f_{I_t^s}(\omega_0^s)-(\text{grad}f(\omega_t^s)-\mathcal{T}_{\omega_0^s}^{\omega_t^s}\text{grad}f(\omega_0^s))\Vert^2\vert \mathcal{F}_t^s]\cr
        &\leq &\mathbb{E}[\text{grad}f_{I_t^s}(\omega_t^s)-\mathcal{T}_{\omega_0^s}^{\omega_t^s}\text{grad}f_{I_t^s}(\omega_0^s)\Vert^2\vert \mathcal{F}_t^s]\cr
        &\leq &2(M^2+\theta^2N^2)\Vert \xi_{\omega_0^s}^{\omega_t^s}\Vert^2
    \end{eqnarray}
    The firstly inequality follows from $\mathbb{E}[\Vert x-\mathbb{E}[x]\Vert^2\vert \mathcal{F}_t^s]\leq \mathbb{E}\Vert[x\Vert^2\vert \mathcal{F}_t^s]$,
    The second inequality are from $\mathrm{(\ref{L5.1})}$.
    For the second item at the right side of $\mathrm{(\ref{L6.P.1})}$, 
    similar to the proof of $\mathrm{(\ref{L5.P.1})}$, it is easy to find that
    \begin{eqnarray}\label{L6.P.3}
      &&\mathbb{E}[\Vert(1-\phi_t^s)\text{grad}f_{I_t^s}(\omega_t^s)-\tilde{\psi}_t^s\mathcal{T}_{\omega_{t-1}^s}^{\omega_t^s}\text{grad}f_{I_t^s}(\omega_{t-1}^s)\cr
      &&-((1-\phi_t^s)\text{grad}f(\omega_t^s)-\tilde{\psi}_t^s\mathcal{T}_{\omega_{t-1}^s}^{\omega_t^s}\text{grad}f(\omega_{t-1}^s))\Vert^2\vert \mathcal{F}_t^s]\cr
      &\leq&\mathbb{E}[\Vert(1-\phi_t^s)\text{grad}f_{I_t^s}(\omega_t^s)-\tilde{\psi}_t^s\mathcal{T}_{\omega_{t-1}^s}^{\omega_t^s}\text{grad}f_{I_t^s}(\omega_{t-1}^s)\Vert^2\vert \mathcal{F}_t^s]\cr
      &\leq & \frac{1}{n}\sum\limits_{i=1}^{n}\Vert(1-\phi_t^s)\text{grad}f_i(\omega_t^s)-\tilde{\psi}_t^s\mathcal{T}_{\omega_{t-1}^s}^{\omega_t^s}\text{grad}f_i(\omega_{t-1}^s)\Vert^2\cr
      &\leq& \frac{2}{n}\sum\limits_{i=1}^{n}\big(\Vert(1-\phi_t^s)\text{grad}f_i(\omega_t^s)\Vert^2+\Vert\tilde{\psi}_t^s\mathcal{T}_{\omega_{t-1}^s}^{\omega_t^s}\text{grad}f_i(\omega_{t-1}^s)\Vert^2\big)\cr
      &\leq& \frac{2}{n}\sum\limits_{i=1}^{n}\big((1-\phi_t^s)^2N^2+(\tilde{\psi}_t^s)^2N^2\big)\cr
      &=& 2N^2\big((1-\phi_t^s)^2+(\tilde{\psi}_t^s)^2\big)\cr
      &\leq& 2N^2\big((1-\phi_t^s)^2+(\psi_t^s)^2\big)
    \end{eqnarray}
    For the third item at the right side of $\mathrm{(\ref{L6.P.1})}$, using assumption 4, we obtain
    \begin{eqnarray}\label{L6.P.4}
            &&\mathbb{E}[\Vert\tilde{\psi}_t^s\mathcal{T}_{\omega_{t-1}^s}^{\omega_t^s}V_{t-1}^s-\tilde{\psi}_t^s\mathcal{T}_{\omega_{t-1}^s}^{\omega_t^s}\text{grad}f(\omega_{t-1}^s)\Vert^2\vert \mathcal{F}_t^s]\cr
            &=&(\tilde{\psi}_t^s)^2\mathbb{E}[\Vert\mathcal{T}_{\omega_{t-1}^s}^{\omega_t^s}V_{t-1}^s-\mathcal{T}_{\omega_{t-1}^s}^{\omega_t^s}\text{grad}f(\omega_{t-1}^s)\Vert^2\vert \mathcal{F}_t^s]\cr
            &=&(\tilde{\psi}_t^s)^2\mathbb{E}[\Vert V_{t-1}^s-\text{grad}f(\omega_{t-1}^s)\Vert^2\vert \mathcal{F}_t^s]\cr
            &=&(\tilde{\psi}_t^s)^2\Vert V_{t-1}^s-\text{grad}f(\omega_{t-1}^s)\Vert^2\cr
            &\leq &(\psi_t^s)^2\Vert V_{t-1}^s-\text{grad}f(\omega_{t-1}^s)\Vert^2
    \end{eqnarray}
    Combining the inequalities $\mathrm{(\ref{L6.P.1})}$-$\mathrm{(\ref{L6.P.4})}$, we get
    \begin{eqnarray}\label{L6.P.5}
            &&\mathbb{E}[\Vert V_t^s-\text{grad}f(\omega_t^s)\Vert^2\vert \mathcal{F}_t^s]\cr
           &\leq &4(M^2+\theta^2N^2)(\phi_t^s)^2\Vert \xi_{\omega_0^s}^{\omega_t^s}\Vert^2+4N^2\big((1-\phi_t^s)^2+(\psi_t^s)^2\big)\cr
           &&+(\psi_t^s)^2\Vert V_{t-1}^s-\text{grad}f(\omega_{t-1}^s)\Vert^2
    \end{eqnarray}
\end{proof}

\noindent
\textbf{Proof of Lemma $\mathrm{\ref{L7}}$}
\begin{proof}
    By the definition of $V_t^s$, we get
    \begin{eqnarray}\label{L7.P.1}
        &&\mathbb{E}[\Vert V_t^s-\text{grad}f(\omega_t^s)\Vert^2\vert \mathcal{F}_t^s]\cr
        &=&\mathbb{E}[\Vert \tilde{\phi}_t^s\Big(\text{grad}f_{I_t^s}(\omega_t^s)-\mathcal{T}_{\omega_0^s}^{\omega_t^s} \big(\text{grad}f_{I_t^s}(\omega_0^s)-\text{grad}f(\omega_0^s)\big) \Big)\cr
        &&+\tilde{\psi}_t^s\Big(\text{grad}f_{I_t^s}(\omega_t^s)-\mathcal{T}_{\omega_{t-1}^s}^{\omega_t^s} \big(\text{grad}f_{I_t^s}(\omega_{t-1}^s)-V_{t-1}^s\big) \Big)-\text{grad}f(\omega_t^s)\Vert^2\vert \mathcal{F}_t^s]\cr
        &=&\mathbb{E}[\Vert\tilde{\phi}_t^s\Big(\text{grad}f_{I_t^s}(\omega_t^s)-\mathcal{T}_{\omega_0^s}^{\omega_t^s} \big(\text{grad}f_{I_t^s}(\omega_0^s)-\text{grad}f(\omega_0^s)\big)-\text{grad}f(\omega_t^s)\Big)\cr
        &&+\tilde{\psi}_t^s\Big(\text{grad}f_{I_t^s}(\omega_t^s)-\mathcal{T}_{\omega_{t-1}^s}^{\omega_t^s} \big(\text{grad}f_{I_t^s}(\omega_{t-1}^s)-V_{t-1}^s\big)-\text{grad}f(\omega_t^s)\Big)\Vert^2\vert \mathcal{F}_t^s]\cr
        &=&\mathbb{E}[\Vert\tilde{\phi}_t^s\Big(\text{grad}f_{I_t^s}(\omega_t^s)-\mathcal{T}_{\omega_0^s}^{\omega_t^s} \big(\text{grad}f_{I_t^s}(\omega_0^s)-\text{grad}f(\omega_0^s)\big)-\text{grad}f(\omega_t^s)\Big)\cr
        &&+\tilde{\psi}_t^s\Big(\text{grad}f_{I_t^s}(\omega_t^s)-\mathcal{T}_{\omega_{t-1}^s}^{\omega_t^s}\text{grad}f_{I_t^s}(\omega_{t-1}^s)-(\text{grad}f(\omega_t^s)-\mathcal{T}_{\omega_{t-1}^s}^{\omega_t^s}\text{grad}f(\omega_{t-1}^s))\Big)\cr
        &&+\tilde{\psi}_t^s\mathcal{T}_{\omega_{t-1}^s}^{\omega_t^s}V_{t-1}^s-\tilde{\psi}_t^s\mathcal{T}_{\omega_{t-1}^s}^{\omega_t^s}\text{grad}f(\omega_{t-1}^s)\Vert^2\vert \mathcal{F}_t^s]\cr
        &=&\mathbb{E}[\Vert\tilde{\phi}_t^s\Big(\text{grad}f_{I_t^s}(\omega_t^s)-\mathcal{T}_{\omega_0^s}^{\omega_t^s} \big(\text{grad}f_{I_t^s}(\omega_0^s)-\text{grad}f(\omega_0^s)\big)-\text{grad}f(\omega_t^s)\Big)\cr
        &&+\tilde{\psi}_t^s\Big(\text{grad}f_{I_t^s}(\omega_t^s)-\mathcal{T}_{\omega_{t-1}^s}^{\omega_t^s}\text{grad}f_{I_t^s}(\omega_{t-1}^s)-(\text{grad}f(\omega_t^s)-\mathcal{T}_{\omega_{t-1}^s}^{\omega_t^s}\text{grad}f(\omega_{t-1}^s))\Big)\Vert^2\vert \mathcal{F}_t^s]\cr
        &&+\mathbb{E}[\Vert\tilde{\psi}_t^s\mathcal{T}_{\omega_{t-1}^s}^{\omega_t^s}V_{t-1}^s-\tilde{\psi}_t^s\mathcal{T}_{\omega_{t-1}^s}^{\omega_t^s}\text{grad}f(\omega_{t-1}^s)\Vert^2\vert \mathcal{F}_t^s]\cr
        &\leq&2(\tilde{\phi}_t^s)^2\mathbb{E}[\Vert\text{grad}f_{I_t^s}(\omega_t^s)-\mathcal{T}_{\omega_0^s}^{\omega_t^s} \big(\text{grad}f_{I_t^s}(\omega_0^s)-\text{grad}f(\omega_0^s)\big)-\text{grad}f(\omega_t^s)\vert \mathcal{F}_t^s]\cr
        &&+2(\tilde{\psi}_t^s)^2\mathbb{E}[\Vert\text{grad}f_{I_t^s}(\omega_t^s)-\mathcal{T}_{\omega_{t-1}^s}^{\omega_t^s}\text{grad}f_{I_t^s}(\omega_{t-1}^s)-(\text{grad}f(\omega_t^s)\cr
        &&-\mathcal{T}_{\omega_{t-1}^s}^{\omega_t^s}\text{grad}f(\omega_{t-1}^s))\Vert^2\vert \mathcal{F}_t^s]+\mathbb{E}[\Vert\tilde{\psi}_t^s\mathcal{T}_{\omega_{t-1}^s}^{\omega_t^s}V_{t-1}^s-\tilde{\psi}_t^s\mathcal{T}_{\omega_{t-1}^s}^{\omega_t^s}\text{grad}f(\omega_{t-1}^s)\Vert^2\vert \mathcal{F}_t^s]
    \end{eqnarray}
    The fourth inequality is due to $\mathbb{E}[(1-\psi_t^s)\text{grad}f_{I_t^s}(\omega_t^s)-\phi_t^s\mathcal{T}_{\omega_0^s}^{\omega_t^s} \big(\text{grad}f_{I_t^s}(\omega_0^s)-\text{grad}f(\omega_0^s)\big)-(1-\psi_t^s)\text{grad}f(\omega_t^s)
    +\tilde{\psi}_t^s\Big(\text{grad}f_{I_t^s}(\omega_t^s)-\mathcal{T}_{\omega_{t-1}^s}^{\omega_t^s}\text{grad}f_{I_t^s}(\omega_{t-1}^s)-(\text{grad}f(\omega_t^s)-\mathcal{T}_{\omega_{t-1}^s}^{\omega_t^s}\text{grad}f(\omega_{t-1}^s))\Big)\vert \mathcal{F}_t^s]=0$
     and $\mathcal{T}_{\omega_{t-1}^s}^{\omega_t^s}V_{t-1}^s-\mathcal{T}_{\omega_{t-1}^s}^{\omega_t^s}\text{grad}f(\omega_{t-1}^s)$ is measurable in $\mathcal{F}_t^s$.
     For the second item at the right side of $\mathrm{(\ref{L7.P.1})}$.
     Similarly the proof of $\mathrm{(\ref{L5.P.1})}$ yields
    \begin{eqnarray}\label{L7.P.2}
        &&\mathbb{E}[\Vert\text{grad}f_{I_t^s}(\omega_t^s)-\mathcal{T}_{\omega_{t-1}^s}^{\omega_t^s}\text{grad}f_{I_t^s}(\omega_{t-1}^s)-(\text{grad}f(\omega_t^s)-\mathcal{T}_{\omega_{t-1}^s}^{\omega_t^s}\text{grad}f(\omega_{t-1}^s))\Vert^2\vert \mathcal{F}_t^s]\cr
        &\leq&\mathbb{E}[\Vert\text{grad}f_{I_t^s}(\omega_t^s)-\mathcal{T}_{\omega_{t-1}^s}^{\omega_t^s}\text{grad}f_{I_t^s}(\omega_{t-1}^s)\Vert^2\vert \mathcal{F}_t^s]\cr
        &\leq & \frac{1}{n}\sum\limits_{i=1}^{n}\Vert\text{grad}f_i(\omega_t^s)-\mathcal{T}_{\omega_{t-1}^s}^{\omega_t^s}\text{grad}f_i(\omega_{t-1}^s)\Vert^2\cr
        &\leq& \frac{2}{n}\sum\limits_{i=1}^{n}\big(\Vert\text{grad}f_i(\omega_t^s)\Vert^2+\Vert\mathcal{T}_{\omega_{t-1}^s}^{\omega_t^s}\text{grad}f_i(\omega_{t-1}^s)\Vert^2\big)\cr
        &\leq& \frac{2}{n}\sum\limits_{i=1}^{n}\big(\Vert\text{grad}f_i(\omega_t^s)\Vert^2+\Vert\text{grad}f_i(\omega_{t-1}^s)\Vert^2\big)\cr
        &\leq& \frac{2}{n}\sum\limits_{i=1}^{n}\big(N^2+N^2\big)\cr
        &=& 4N^2
    \end{eqnarray}
    The first inequality holds due to $\mathbb{E}[\Vert x-\mathbb{E}[x]\Vert^2\vert \mathcal{F}_t^s]\leq \mathbb{E}\Vert[x\Vert^2\vert \mathcal{F}_t^s]$
    Combining the inequalities $\mathrm{(\ref{L6.P.2})}$,$\mathrm{(\ref{L6.P.4})}$ and $\mathrm{(\ref{L7.P.1})}$-$\mathrm{(\ref{L7.P.2})}$ gives
    \begin{eqnarray}\label{L7.P.3}
        &&\mathbb{E}[\Vert V_t^s-\text{grad}f(\omega_t^s)\Vert^2\vert \mathcal{F}_t^s]\cr
        &\leq &4(M^2+\theta^2N^2)(\tilde{\phi}_t^s)^2\Vert \xi_{\omega_0^s}^{\omega_t^s}\Vert^2+8N^2(\tilde{\psi}_t^s)^2\cr
        &&+(\tilde{\psi}_t^s)^2\Vert V_{t-1}^s-\text{grad}f(\omega_{t-1}^s)\Vert^2\cr
        &\leq &4(M^2+\theta^2N^2)(\tilde{\phi}_t^s)^2\Vert \xi_{\omega_0^s}^{\omega_t^s}\Vert^2+8N^2(\psi_t^s)^2\cr
        &&+(\psi_t^s)^2\Vert V_{t-1}^s-\text{grad}f(\omega_{t-1}^s)\Vert^2
    \end{eqnarray}
\end{proof}

\textbf{Proof of Lemma $\mathrm{\ref{L8}}$}
\begin{proof}
    By the definition, we have
\begin{eqnarray}\label{L8.P.1}
    &&\mathbb{E}[\Vert V_t^s-\text{grad}f(\omega_t^s)\Vert^2\vert \mathcal{F}_t^s]\cr
    &=&\mathbb{E}[\Vert \phi_t^s\Big(\text{grad}f_{I_t^s}(\omega_t^s)-\mathcal{T}_{\omega_0^s}^{\omega_t^s} \big(\text{grad}f_{I_t^s}(\omega_0^s)-\text{grad}f(\omega_0^s)\big) \Big)\cr
    &&+\psi_t^s\Big(\text{grad}f_{I_t^s}(\omega_t^s)-\mathcal{T}_{\omega_{t-1}^s}^{\omega_t^s} \big(\text{grad}f_{I_t^s}(\omega_{t-1}^s)-V_{t-1}^s\big) \Big)\cr
    &&+(1-\phi_t^s-\tilde{\psi}_t^s)\text{grad}f_{I_t^s}(\omega_t^s)-\text{grad}f(\omega_t^s)\Vert^2\vert \mathcal{F}_t^s]\cr
    &=&\mathbb{E}[\Vert\phi_t^s\Big(\text{grad}f_{I_t^s}(\omega_t^s)-\mathcal{T}_{\omega_0^s}^{\omega_t^s} \big(\text{grad}f_{I_t^s}(\omega_0^s)-\text{grad}f(\omega_0^s)\big)-\text{grad}f(\omega_t^s)\Big)\cr
    &&+\psi_t^s\Big(\text{grad}f_{I_t^s}(\omega_t^s)-\mathcal{T}_{\omega_{t-1}^s}^{\omega_t^s} \big(\text{grad}f_{I_t^s}(\omega_{t-1}^s)-V_{t-1}^s\big)-\text{grad}f(\omega_t^s)\Big)\cr
    &&+(1-\phi_t^s-\psi_t^s)(\text{grad}f_{I_t^s}(\omega_t^s)-\text{grad}f(\omega_t^s))\Vert^2\vert \mathcal{F}_t^s]\cr
    &=&\mathbb{E}[\Vert\phi_t^s\Big(\text{grad}f_{I_t^s}(\omega_t^s)-\mathcal{T}_{\omega_0^s}^{\omega_t^s} \big(\text{grad}f_{I_t^s}(\omega_0^s)-\text{grad}f(\omega_0^s)\big)-\text{grad}f(\omega_t^s)\Big)\cr
    &&+\psi_t^s\Big(\text{grad}f_{I_t^s}(\omega_t^s)-\mathcal{T}_{\omega_{t-1}^s}^{\omega_t^s}\text{grad}f_{I_t^s}(\omega_{t-1}^s)-(\text{grad}f(\omega_t^s)-\mathcal{T}_{\omega_{t-1}^s}^{\omega_t^s}\text{grad}f(\omega_{t-1}^s))\Big)\cr
    &&+(1-\phi_t^s-\psi_t^s)(\text{grad}f_{I_t^s}(\omega_t^s)-\text{grad}f(\omega_t^s))\cr
    &&+\psi_t^s\mathcal{T}_{\omega_{t-1}^s}^{\omega_t^s}V_{t-1}^s-\psi_t^s\mathcal{T}_{\omega_{t-1}^s}^{\omega_t^s}\text{grad}f(\omega_{t-1}^s)\Vert^2\vert \mathcal{F}_t^s]\cr
    &=&\mathbb{E}[\Vert\phi_t^s\Big(\text{grad}f_{I_t^s}(\omega_t^s)-\mathcal{T}_{\omega_0^s}^{\omega_t^s} \big(\text{grad}f_{I_t^s}(\omega_0^s)-\text{grad}f(\omega_0^s)\big)-\text{grad}f(\omega_t^s)\Big)\cr
    &&+\psi_t^s\Big(\text{grad}f_{I_t^s}(\omega_t^s)-\mathcal{T}_{\omega_{t-1}^s}^{\omega_t^s}\text{grad}f_{I_t^s}(\omega_{t-1}^s)-(\text{grad}f(\omega_t^s)-\mathcal{T}_{\omega_{t-1}^s}^{\omega_t^s}\text{grad}f(\omega_{t-1}^s))\Big)\cr
    &&+(1-\phi_t^s-\psi_t^s)(\text{grad}f_{I_t^s}(\omega_t^s)-\text{grad}f(\omega_t^s))\Vert\cr
    &&+\Vert\psi_t^s\mathcal{T}_{\omega_{t-1}^s}^{\omega_t^s}V_{t-1}^s-\psi_t^s\mathcal{T}_{\omega_{t-1}^s}^{\omega_t^s}\text{grad}f(\omega_{t-1}^s)\Vert^2\cr
    &&+\langle\phi_t^s\Big(\text{grad}f_{I_t^s}(\omega_t^s)-\mathcal{T}_{\omega_0^s}^{\omega_t^s} \big(\text{grad}f_{I_t^s}(\omega_0^s)-\text{grad}f(\omega_0^s)\big)-\text{grad}f(\omega_t^s)\Big)\cr
    &&+\psi_t^s\Big(\text{grad}f_{I_t^s}(\omega_t^s)-\mathcal{T}_{\omega_{t-1}^s}^{\omega_t^s}\text{grad}f_{I_t^s}(\omega_{t-1}^s)-(\text{grad}f(\omega_t^s)-\mathcal{T}_{\omega_{t-1}^s}^{\omega_t^s}\text{grad}f(\omega_{t-1}^s))\Big)\cr
    &&+(1-\phi_t^s-\psi_t^s)(\text{grad}f_{I_t^s}(\omega_t^s)-\text{grad}f(\omega_t^s)),\cr
    &&\psi_t^s\mathcal{T}_{\omega_{t-1}^s}^{\omega_t^s}V_{t-1}^s-\psi_t^s\mathcal{T}_{\omega_{t-1}^s}^{\omega_t^s}\text{grad}f(\omega_{t-1}^s)\rangle\vert \mathcal{F}_t^s]\cr
    &=&\mathbb{E}[\Vert\phi_t^s\Big(\text{grad}f_{I_t^s}(\omega_t^s)-\mathcal{T}_{\omega_0^s}^{\omega_t^s} \big(\text{grad}f_{I_t^s}(\omega_0^s)-\text{grad}f(\omega_0^s)\big)-\text{grad}f(\omega_t^s)\Big)\cr
    &&+\psi_t^s\Big(\text{grad}f_{I_t^s}(\omega_t^s)-\mathcal{T}_{\omega_{t-1}^s}^{\omega_t^s}\text{grad}f_{I_t^s}(\omega_{t-1}^s)-(\text{grad}f(\omega_t^s)-\mathcal{T}_{\omega_{t-1}^s}^{\omega_t^s}\text{grad}f(\omega_{t-1}^s))\Big)\cr
    &&+(1-\phi_t^s-\psi_t^s)(\text{grad}f_{I_t^s}(\omega_t^s)-\text{grad}f(\omega_t^s))\Vert\cr
    &&+\Vert\psi_t^s\mathcal{T}_{\omega_{t-1}^s}^{\omega_t^s}V_{t-1}^s-\psi_t^s\mathcal{T}_{\omega_{t-1}^s}^{\omega_t^s}\text{grad}f(\omega_{t-1}^s)\Vert^2\vert \mathcal{F}_t^s]\cr
    &\leq&3(\phi_t^s)^2\mathbb{E}[\Vert\text{grad}f_{I_t^s}(\omega_t^s)-\mathcal{T}_{\omega_0^s}^{\omega_t^s} \big(\text{grad}f_{I_t^s}(\omega_0^s)-\text{grad}f(\omega_0^s)\big)-\text{grad}f(\omega_t^s)\Vert^2\vert \mathcal{F}_t^s]\cr
    &&+3(\psi_t^s)^2\mathbb{E}[\Vert\text{grad}f_{I_t^s}(\omega_t^s)-\mathcal{T}_{\omega_{t-1}^s}^{\omega_t^s}\text{grad}f_{I_t^s}(\omega_{t-1}^s)-(\text{grad}f(\omega_t^s)\cr
    &&-\mathcal{T}_{\omega_{t-1}^s}^{\omega_t^s}\text{grad}f(\omega_{t-1}^s))\Vert^2\vert \mathcal{F}_t^s]+3(1-\phi_t^s-\psi_t^s)^2\mathbb{E}[\Vert(\text{grad}f_{I_t^s}(\omega_t^s)-\text{grad}f(\omega_t^s))\Vert^2\vert \mathcal{F}_t^s]\cr
    &&+(\psi_t^s)^2\mathbb{E}[\Vert\mathcal{T}_{\omega_{t-1}^s}^{\omega_t^s}V_{t-1}^s-\mathcal{T}_{\omega_{t-1}^s}^{\omega_t^s}\text{grad}f(\omega_{t-1}^s)\Vert^2\vert \mathcal{F}_t^s]
\end{eqnarray}
The fifth equality is based on $\mathbb{E}[\langle\phi_t^s\Big(\text{grad}f_{I_t^s}(\omega_t^s)-\mathcal{T}_{\omega_0^s}^{\omega_t^s} \big(\text{grad}f_{I_t^s}(\omega_0^s)-\text{grad}f(\omega_0^s)\big)-\text{grad}f(\omega_t^s)\Big)
+\psi_t^s\Big(\text{grad}f_{I_t^s}(\omega_t^s)-\mathcal{T}_{\omega_{t-1}^s}^{\omega_t^s}\text{grad}f_{I_t^s}(\omega_{t-1}^s)-(\text{grad}f(\omega_t^s)-\mathcal{T}_{\omega_{t-1}^s}^{\omega_t^s}\text{grad}f(\omega_{t-1}^s))\Big)
+(1-\phi_t^s-\psi_t^s)(\text{grad}f_{I_t^s}(\omega_t^s)-\text{grad}f(\omega_t^s)),
\psi_t^s\mathcal{T}_{\omega_{t-1}^s}^{\omega_t^s}V_{t-1}^s-\psi_t^s\mathcal{T}_{\omega_{t-1}^s}^{\omega_t^s}\text{grad}f(\omega_{t-1}^s)\rangle\vert \mathcal{F}_t^s]=0$
 and $\mathcal{T}_{\omega_{t-1}^s}^{\omega_t^s}V_{t-1}^s-\mathcal{T}_{\omega_{t-1}^s}^{\omega_t^s}\text{grad}f(\omega_{t-1}^s)$ is measurable in $\mathcal{F}_t^s$.
 Now, we consider the each term on the right side of $\mathrm{(\ref{L8.P.1})}$.
 By lemma $\mathrm{\ref{L5}}$, we get
\begin{eqnarray}\label{L8.P.2}
        &&\mathbb{E}[\Vert\text{grad}f_{I_t^s}(\omega_t^s)\mathcal{T}_{\omega_{t-1}^s}^{\omega_t^s}\text{grad}f_{I_t^s}(\omega_{t-1}^s)-(\text{grad}f(\omega_t^s)-\mathcal{T}_{\omega_{t-1}^s}^{\omega_t^s}\text{grad}f(\omega_{t-1}^s))\Vert^2\vert \mathcal{F}_t^s]\cr
        &\leq&\mathbb{E}[\Vert\text{grad}f_{I_t^s}(\omega_t^s)-\mathcal{T}_{\omega_{t-1}^s}^{\omega_t^s}\text{grad}f_{I_t^s}(\omega_{t-1}^s)\Vert^2\vert \mathcal{F}_t^s]\cr
        &\leq &2(M^2+\theta^2N^2)\Vert \xi_{\omega_1}^{\omega_2}\Vert^2
\end{eqnarray}
Combining the inequalities $\mathrm{(\ref{L6.P.2})}$,$\mathrm{(\ref{L6.P.4})}$,$\mathrm{(\ref{L7.P.2})}$ and $\mathrm{(\ref{L8.P.2})}$, we have
\begin{eqnarray}\label{L8.P.3}
    &&\mathbb{E}[\Vert V_t^s-\text{grad}f(\omega_t^s)\Vert^2\vert \mathcal{F}_t^s]\cr
    &\leq&6(\phi_t^s)^2(M^2+\theta^2N^2)\Vert \xi_{\omega_0^s}^{\omega_t^s}\Vert^2+6(\psi_t^s)^2(M^2+\theta^2N^2)\Vert \xi_{\omega_1}^{\omega_2}\Vert^2\cr
    &&+12(1-\phi_t^s-\psi_t^s)^2N^2+(\psi_t^s)^2\Vert V_{t-1}^s-\text{grad}f(\omega_{t-1}^s)\Vert^2
\end{eqnarray}
\end{proof}

\noindent
\textbf{Proof of Lemma $\mathrm{\ref{L9}}$}
\begin{proof}
    From assumption 2, if $\Vert \xi_{\omega_0^s}^{\omega_t^s}\Vert> \rho$, then $\omega_t^s=R_{\omega_0^s}(\xi_{\omega_0^s}^{\omega_t^s}) \notin \Omega$.
     Thit is contradicted with assumption 2 $\omega_t^s\in \Omega$.
    Hence, $\Vert \xi_{\omega_0^s}^{\omega_t^s}\Vert^2\leq \rho^2$, substituting this result into lemma $\mathrm{\ref{L8}}$ and taking the mathematical
    expectation, we have
    \begin{eqnarray}\label{L9.P.1}
        &&\mathbb{E}[\Vert V_t^s-\text{grad}f(\omega_t^s)\Vert^2]\cr
        &\leq& 6(M^2+\theta^2N^2)\big((\phi_t^s)^2\rho^2+(\psi_t^s)^2(\alpha_{t-1}^s)^2\mathbb{E}[\Vert V_{t-1}^s\Vert^2)]\cr
        &&+12(1-\phi_t^s-\psi_t^s)^2N^2+(\psi_t^s)^2\mathbb{E}[\Vert V_{t-1}^s-\text{grad}f(\omega_{t-1}^s)\Vert^2]\cr
        &\leq& 6(M^2+\theta^2N^2)(\phi_t^s)^2\rho^2+12(M^2+\theta^2N^2)(\psi_t^s)^2(\alpha_{t-1}^s)^2 \mathbb{E}[\Vert \text{grad}f(\omega_{t-1}^s)\Vert^2]\cr
        &&+\big[1+12(M^2+\theta^2N^2)(\alpha_{t-1}^s)^2  \big](\psi_t^s)^2\mathbb{E}[\Vert V_{t-1}^s-\text{grad}f(\omega_{t-1}^s)\Vert^2]\cr
        &&+12(1-\phi_t^s-\psi_t^s)^2N^2
    \end{eqnarray}
    The last equality is due to $(a+b)^2\leq 2a^2+2b^2$. For any $2\leq t \leq m-1$, we have
    \begin{eqnarray}\label{L9.P.2}
        &&\frac{\mathbb{E}[\Vert V_t^s-\text{grad}f(\omega_t^s)\Vert^2]}{\alpha_{t-1}^s}- \frac{\mathbb{E}[\Vert V_{t-1}^s-\text{grad}f(\omega_{t-1}^s)\Vert^2]}{\alpha_{t-2}^s}\cr
        &\leq &\frac{6(M^2+\theta^2N^2)}{\alpha_{t-1}^s}(\phi_t^s)^2\rho^2+12(M^2+\theta^2N^2)(\psi_t^s)^2\alpha_{t-1}^s \mathbb{E}[\Vert \text{grad}f(\omega_{t-1}^s)\Vert^2]\cr
        &&+\big[-\frac{1}{\alpha_{t-2}^s} +\frac{1}{\alpha_{t-1}^s}+12(M^2+\theta^2N^2)\alpha_{t-1}^s  \big](\psi_t^s)^2\mathbb{E}[\Vert V_{t-1}^s-\text{grad}f(\omega_{t-1}^s)\Vert^2]\cr
        &&+\frac{12(1-\phi_t^s-\psi_t^s)^2N^2}{\alpha_{t-1}^s}\cr
        &\leq &\frac{6(M^2+\theta^2N^2)}{\alpha_{t-1}^s}(\phi_t^s)^2\rho^2+12(M^2+\theta^2N^2)(\psi_t^s)^2\alpha_{t-1}^s \mathbb{E}[\Vert \text{grad}f(\omega_{t-1}^s)\Vert^2]\cr
        &&+\big[-\frac{1}{\alpha_{t-2}^s} +\frac{\psi_t^s}{\alpha_{t-1}^s}+12(M^2+\theta^2N^2)\alpha_{t-1}^s \big]\mathbb{E}[\Vert V_{t-1}^s-\text{grad}f(\omega_{t-1}^s)\Vert^2]\cr
        &&+\frac{12(1-\phi_t^s-\psi_t^s)^2N^2}{\alpha_{t-1}^s}
    \end{eqnarray}
    The second inequality is due to $(\psi_t^s)^2\leq 1$ and $(\psi_t^s)^2\leq \psi_t^s$.
     Now, we consider the third term on the right side of $\mathrm{(\ref{L9.P.2})}$
    \begin{equation}\label{L9.P.3}
        -\frac{1}{\alpha_{t-2}^s} +\frac{\psi_t^s}{\alpha_{t-1}^s}+12(M^2+\theta^2N^2)\alpha_{t-1}^s=\frac{1}{\alpha_{t-1}^s}-\frac{1}{\alpha_{t-2}^s} -\frac{1-\psi_t^s}{\alpha_{t-1}^s}+12(M^2+\theta^2N^2)\alpha_{t-1}^s
    \end{equation}
    Giving by conditions $\alpha_t^s=(t+s+\kappa+2)^{-P}C_\alpha$ and $\psi_t^s=1-(t+s+\kappa+1)^{-Q}C_\psi$, then $\frac{1-\psi_t^s}{\alpha_{t-1}^s}=\frac{C_\psi}{C_\alpha}\cdot (t+s+\kappa+1)^{P-Q}$, and
    $\frac{1}{\alpha_{t-1}^s}-\frac{1}{\alpha_{t-2}^s}=\frac{1}{C_\alpha}\big((t+s+\kappa+1)^P-(t+s+\kappa)^P\big)$. Let $g(x)=(x+\kappa)^P,0<P<1$, it is easily verified that $g''(x)=P(P-1)(x+\kappa)^{P-2}$, and $g''(x)<0$, $x>0$. 
    Thus $g(x+1)\leq g(x)+g'(x)$, i.e., $(t+s+\kappa+1)^P-(t+s+\kappa)^P\leq P(t+s+\kappa)^{P-1}$. Substituting this result into $\mathrm{(\ref{L9.P.3})}$, then we have
    \begin{eqnarray}\label{L9.P.4}
        &&\frac{1}{\alpha_{t-1}^s}-\frac{1}{\alpha_{t-2}^s} -\frac{1-\psi_t^s}{\alpha_{t-1}^s}+12(M^2+\theta^2N^2)\alpha_{t-1}^s\cr
        &\leq& \frac{P}{C_\alpha}(t+s+\kappa)^{P-1}-\frac{C_\psi}{C_\alpha}\cdot (t+s+\kappa+1)^{P-Q}+12(M^2+\theta^2N^2)\alpha_{t-1}^s
    \end{eqnarray}
    By conditions $P\geq \frac{\gamma Q-1}{\gamma-1}$, $\frac{1-P}{Q-P}\geq \gamma >1$ and the definition of $\kappa$. If $x\geq \kappa$, it is known that $0<x+1\leq x^{\gamma} \leq x^{\frac{1-P}{Q-P}}$. 
    Note that $0<Q-P<Q<1$, the function $y=x^{Q-P}$ is a monotonically increasing in $x\geq 1$. We can easily verify that $0<(x+1)^{Q-P}\leq (x^{\frac{1-P}{Q-P}})^{Q-P}=x^{1-P}$. Therefore,
    $\frac{1}{x^{1-P}}\leq \frac{1}{(x+1)^{Q-P}}$, i.e., $x^{P-1}\leq (x+1)^{P-Q},x\geq \kappa$, implies that $(t+s+\kappa)^{P-1}\leq (t+s+\kappa+1)^{P-Q}$. This together with $\mathrm{(\ref{L9.P.4})}$ leads to
    \begin{eqnarray}\label{L9.P.5}
        &&\frac{1}{\alpha_{t-1}^s}-\frac{1}{\alpha_{t-2}^s} -\frac{1-\psi_t^s}{\alpha_{t-1}^s}+12(M^2+\theta^2N^2)\alpha_{t-1}^s\cr
        &\leq& \frac{P-C_\psi}{C_\alpha}\cdot (t+s+\kappa+1)^{P-Q}+12(M^2+\theta^2N^2)\alpha_{t-1}^s\cr
        &\leq&-6C_\alpha\beta(M^2+\theta^2N^2)\cdot (t+s+\kappa+1)^{P-Q}+12(M^2+\theta^2N^2)\alpha_{t-1}^s\cr
        &=&6(M^2+\theta^2N^2)C_\alpha(-\beta (t+s+\kappa+1)^{P-Q}+2(t+s+\kappa+1)^{-P})
    \end{eqnarray}
    Noting that $0<a<1$, $y=a^x$ is a monotonically decreasing function in $x>0$, hence, 
    $y=(\frac{1}{t+s+\kappa+1})^x$ is a monotonically decreasing function in $x>0$.
    By condition $\frac{Q}{2}\leq P\leq Q$, we get
    $\big(\frac{1}{t+s+\kappa+1}\big)^{Q-P}\geq \big(\frac{1}{t+s+\kappa+1}\big)^{P}$,
     i.e., $(t+s+\kappa+1)^{P-Q}\geq (t+s+\kappa+1)^{-P}$.
     Substituting this result back to$\mathrm{(\ref{L9.P.5})}$, we have
    \begin{eqnarray}\label{L9.P.6}
        &&\frac{1}{\alpha_{t-1}^s}-\frac{1}{\alpha_{t-2}^s} -\frac{1-\psi_t^s}{\alpha_{t-1}^s}+\frac{4(M^2+\theta^2N^2)}{b}\alpha_{t-1}^s\cr
        &\leq& 6(M^2+\theta^2N^2)C_\alpha(-\beta (t+s+\kappa+1)^{P-Q}+2(t+s+\kappa+1)^{-P})\cr
        &\leq& 6(M^2+\theta^2N^2)C_\alpha(-\beta(t+s+\kappa+1)^{-P}+2(t+s+\kappa+1)^{-P})\cr
        &=& 6(2-\beta)(M^2+\theta^2N^2)\alpha_{t-1}^s
    \end{eqnarray}
    Combining the inequalities $\mathrm{(\ref{L9.P.2})}$ and $\mathrm{(\ref{L9.P.6})}$, we obtain
    \begin{eqnarray}\label{L9.P.7}
        &&\frac{\mathbb{E}[\Vert V_t^s-\text{grad}f(\omega_t^s)\Vert^2]}{\alpha_{t-1}^s}- \frac{\mathbb{E}[\Vert V_{t-1}^s-\text{grad}f(\omega_{t-1}^s)\Vert^2]}{\alpha_{t-2}^s}\cr
        &\leq &\frac{6(M^2+\theta^2N^2)}{\alpha_{t-1}^s}(\phi_t^s)^2\rho^2+12(M^2+\theta^2N^2)(\psi_t^s)^2\alpha_{t-1}^s \mathbb{E}[\Vert \text{grad}f(\omega_{t-1}^s)\Vert^2]\cr
        &&+\big[-\frac{1}{\alpha_{t-2}^s} +\frac{\psi_t^s}{\alpha_{t-1}^s}+12(M^2+\theta^2N^2)\alpha_{t-1}^s \big]\mathbb{E}[\Vert V_{t-1}^s-\text{grad}f(\omega_{t-1}^s)\Vert^2]\cr
        &&+\frac{12(1-\phi_t^s-\psi_t^s)^2N^2}{\alpha_{t-1}^s}\cr
        &\leq & \frac{6(M^2+\theta^2N^2)}{\alpha_{t-1}^s}(\phi_t^s)^2\rho^2+12(M^2+\theta^2N^2)\alpha_{t-1}^s \mathbb{E}[\Vert \text{grad}f(\omega_{t-1}^s)\Vert^2]\cr
        &&+6(2-\beta)(M^2+\theta^2N^2)\alpha_{t-1}^s\mathbb{E}[\Vert V_{t-1}^s-\text{grad}f(\omega_{t-1}^s)\Vert^2]\cr
        &&+\frac{12(1-\psi_t^s)^2N^2}{\alpha_{t-1}^s}
    \end{eqnarray}
\end{proof}



\end{appendices}






\renewcommand\refname{References}

\end{document}